\numberwithin{equation}{section}
\theoremstyle{plain}
\newtheorem{theorem}{Theorem}[section]
\newtheorem{proposition}[theorem]{Proposition}
\newtheorem{lemma}[theorem]{Lemma}
\newtheorem{corollary}[theorem]{Corollary}
\newtheorem{claim}{Claim}
\theoremstyle{definition}
\newtheorem{definition}[theorem]{Definition}
\newtheorem{remark}[theorem]{Remark}
\title[Sharp quantitative estimates]{Sharp quantitative estimates of Struwe's Decomposition}
\author{Bin Deng}
\address{School of Mathematical Sciences, University of Science and Technology of China,
Hefei, Anhui Province, P.R. China, 230026}
\email{bingomat@mail.ustc.edu.cn}
\author{Liming Sun}
\address{Department of Mathematics, University of British Columbia, Vancouver, BC, V6T 1Z2, CA.}
\email{lsun@math.ubc.ca}
\author{Jun-cheng Wei}
\address{Department of Mathematics, University of British Columbia, Vancouver, BC, V6T 1Z2, CA.}
\email{jcwei@math.ubc.ca}
\date{\today \,(Last Typeset)}
\subjclass[2010]{Primary 35A23, 26D10; Secondary 35B35, 35J20}
\keywords{Sobolev inequality, stability, quantitative estimates, Struwe's decomposition, reduction method.}
\def\l{\lambda}
\def\Rn{\mathbb{R}^n}
\def\al{\alpha}
\begin{document}

\maketitle
\begin{abstract}
Suppose $u\in \dot{H}^1(\mathbb{R}^n)$. In a seminal work,  Struwe  proved that if $u\geq 0$ and $\|\Delta u+u^{\frac{n+2}{n-2}}\|_{H^{-1}}:=\Gamma(u)\to 0$ then $dist(u,\mathcal{T})\to 0$, where $dist(u,\mathcal{T})$ denotes the $\dot{H}^1(\mathbb{R}^n)$-distance of $u$ from the manifold of sums of Talenti bubbles.  Ciraolo, Figalli  and Maggi obtained the first quantitative version of Struwe's decomposition with one bubble in all dimensions, namely $\delta (u) \leq C \Gamma (u)$. For Struwe's decomposition with two or more bubbles, Figalli and Glaudo  showed a striking  dimensional dependent quantitative estimate, namely  $\delta(u)\leq C \Gamma(u)$ when $3\leq n\leq 5$ while this is false for $ n\geq 6$.  In this paper, we show that
    \[dist (u,\mathcal{T})\leq C\begin{cases} \Gamma(u)\left|\log \Gamma(u)\right|^{\frac{1}{2}}\quad&\text{if }n=6,\\
    |\Gamma(u)|^{\frac{n+2}{2(n-2)}}\quad&\text{if }n\geq 7.\end{cases}\]
   Furthermore, we show that this inequality is sharp.
\end{abstract}
\section{Introduction}
\subsection{Motivation and main results} The Sobolev inequality with exponent 2 states that, for any $n \geq 3$ and any $u \in \dot{H}^{1}\left(\mathbb{R}^{n}\right):=D^{1,2}\left(\mathbb{R}^{n}\right) $, it holds that
\begin{align}\label{eq:1.1}
S\|u\|_{L^{2^{*}}} \leq\|\nabla u\|_{L^{2}},
\end{align}
where $2^*=\frac{2n}{n-2}$ and $S = S(n)$ is a dimensional constant.

It is well-known that the Euler-Lagrange equation of \eqref{eq:1.1}, i.e., critical points of Sobolev inequality, up to scaling, is given by
\begin{align}\label{E-L}
    \Delta u+|u|^{p-1}u=0\ \ \ \ \ \ \mbox{in} \ \mathbb{R}^n.
\end{align}
Throughout this paper, we denote $p=\frac{n+2}{n-2}$.
By \citet{caffarelli1989} and \citet{gidas1979symmetry}, it is known that all the positive solutions are \textit{Talenti bubbles} \cite{talenti1976}, i.e.
\begin{align}\label{bubble}
    U[z, \lambda](x):=(n(n-2))^{\frac{n-2}{4}}  \left(\frac{\lambda}{1+\lambda^{2}|x-z|^{2}}\right)^{\frac{n-2}{2}}.
\end{align}
These are all the minimizers of the Sobolev inequality, up to scaling. There are many interests regarding to the stability of \eqref{eq:1.1}. From the perspective of discrepancy in the Sobolev inequality,  \citet{bianchi1991note} gave a quantitative estimate near the minimizers, that is
\begin{align}
\label{bianchi}
    \inf _{z \in \mathbb{R}^{n}, \lambda>0,\alpha\in \mathbb{R}}\|\nabla(u-\alpha U[z, \lambda])\|_{L^{2}}^{2} \leq C(n)\left(\|\nabla u\|_{L^{2}}^{2}-S^{2}\|u\|^2_{L^{2^*}}\right).
\end{align}

A natural  and more challenging perspective is through Euler-Lagrange equation, that is, whether a function $u$ that almost solves \eqref{E-L} must be quantitatively close to Talenti bubbles. There are many obstacles to address this question. First, \eqref{E-L} has many other sign-changing solutions \cite{delPino2011,weiyue1986conformally}. Second, $u$ could be the sum of many weakly interacting Talenti bubbles even if we restrict to the non-negative functions. In fact, a seminal work of Struwe \cite{struwe1984global} showed this is always the case, at least for non-negative functions.

\begin{theorem}[\citet{struwe1984global}]\label{thm:str}
Let $n \geq 3$ and $\nu \geq 1$ be positive integers. Let $\left(u_{k}\right)_{k \in \mathbb{N}} \subseteq \dot{H}^{1}\left(\mathbb{R}^{n}\right)$ be a sequence of non-negative functions such that $\left(\nu-\frac{1}{2}\right) S^{n} \leq \int_{\mathbb{R}^{n}}\left|\nabla u_{k}\right|^{2} \leq\left(\nu+\frac{1}{2}\right) S^{n}$ with $S=S(n)$ as in \eqref{eq:1.1}, and assume that
\[\left\|\Delta u_{k}+u_{k}^{2^{*}-1}\right\|_{H^{-1}} \rightarrow 0 \quad \text { as } k \rightarrow \infty.\]
Then there exist a sequence $(z_{1}^{(k)}, \ldots, z_{\nu}^{(k)})_{k \in \mathbb{N}}$ of $\nu$-tuples of points in $\mathbb{R}^{n}$ and a sequence $(\lambda_{1}^{(k)}, \ldots, \lambda_{\nu}^{(k)})_{k \in \mathbb{N}}$
of $\nu$-tuples of positive real numbers such that
\[
\left\|\nabla\left(u_{k}-\sum_{i=1}^{\nu} U[z_{i}^{(k)}, \lambda_{i}^{(k)}]\right)\right\|_{L^{2}} \rightarrow 0 \quad \text { as } k \rightarrow \infty.
\]
\end{theorem}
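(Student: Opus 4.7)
The plan is to prove the decomposition by an iterative bubble-extraction procedure: at each step, I would identify the dominant concentration profile, subtract a suitable Talenti bubble, verify that the remainder still almost solves the critical equation in $H^{-1}$, and show that the $\dot H^1$-energy decouples so that at most $\nu$ bubbles can be extracted. This is a quantitative version of Lions' concentration--compactness principle adapted to the Palais--Smale sequences of the Sobolev functional.

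For the first extraction, I would pass to a weak limit $u_k \rightharpoonup u_\infty$ in $\dot H^1(\mathbb{R}^n)$ after a subsequence. Using the uniform $\dot H^1$-bound and the assumption $\Delta u_k + u_k^{2^*-1} \to 0$ in $H^{-1}$, together with the fact that $u_k \ge 0$, I would pass to the limit in the distributional sense to conclude that $u_\infty$ is a non-negative weak solution of \eqref{E-L}. By the classification of Caffarelli--Gidas--Spruck, $u_\infty$ is either identically zero or a Talenti bubble $U[z_1,\lambda_1]$. In the second case I take that bubble as the first term. In the first case, $u_k \to 0$ weakly but not strongly, so I would run a blow-up analysis: choose centers $z_1^{(k)}$ and scales $\lambda_1^{(k)}$ by maximizing an appropriate concentration functional (e.g.\ of the form $\sup_{x,r}\int_{B_r(x)} u_k^{2^*}$, solved for the radius at which the mass hits a fixed threshold), and consider the rescaled sequence
\[
\tilde u_k(x) := (\lambda_1^{(k)})^{-\frac{n-2}{2}}\, u_k\!\bigl((\lambda_1^{(k)})^{-1} x + z_1^{(k)}\bigr).
\]
Conformal invariance gives $\|\nabla \tilde u_k\|_{L^2} = \|\nabla u_k\|_{L^2}$ and $\|\Delta \tilde u_k + \tilde u_k^{2^*-1}\|_{H^{-1}} = \|\Delta u_k + u_k^{2^*-1}\|_{H^{-1}} \to 0$, while the choice of scale guarantees the weak limit $\tilde u_\infty$ is non-zero and hence a bubble by the previous argument.

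Next I would set $v_k := u_k - U[z_1^{(k)},\lambda_1^{(k)}]$ and verify the two crucial decoupling properties: the Brezis--Lieb-type identities
\[
\|\nabla u_k\|_{L^2}^2 = \|\nabla U[z_1^{(k)},\lambda_1^{(k)}]\|_{L^2}^2 + \|\nabla v_k\|_{L^2}^2 + o(1), \qquad \|u_k\|_{L^{2^*}}^{2^*} = \|U[z_1^{(k)},\lambda_1^{(k)}]\|_{L^{2^*}}^{2^*} + \|v_k\|_{L^{2^*}}^{2^*} + o(1),
\]
and, more delicately, $\|\Delta v_k + |v_k|^{2^*-2}v_k\|_{H^{-1}} \to 0$. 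The first two are a direct consequence of Brezis--Lieb applied to the rescaled sequence $\tilde u_k - U[0,1]$. The third requires expanding $|U+v_k|^{2^*-2}(U+v_k) - U^{2^*-1} - |v_k|^{2^*-2}v_k$ and estimating the cross terms in $H^{-1}$ via Sobolev embedding, using that $\tilde v_k \rightharpoonup 0$ and the weak vanishing of $|U[0,1]|^{2^*-2} v_k$ type products.

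Finally I would iterate: the new sequence $v_k$ satisfies the same hypotheses with energy reduced by $S^n = \|\nabla U\|_{L^2}^2 + o(1)$. Since each extracted bubble consumes a quantum of energy $S^n$ and the total energy is bounded by $(\nu + \tfrac12) S^n$, after at most $\nu$ iterations the residual must satisfy $\|\nabla v_k^{(\nu)}\|_{L^2} \to 0$, which together with the Sobolev inequality and the residual bound yields strong convergence to $0$. I expect the main obstacle to be the third decoupling identity above, i.e.\ showing that the nonlinear residual of the remainder still tends to zero in $H^{-1}$; this is where one must carefully handle the cross-interaction of the subtracted bubble with the remainder, and it is also where the ordering of extraction by concentration scale (largest $\lambda$ first, or equivalently most concentrated profile first) becomes essential to prevent reabsorption of the mass that has already been peeled off.
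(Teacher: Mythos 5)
The paper does not actually prove this statement; it is cited verbatim from Struwe's paper, so there is no in-paper proof to compare against. Your sketch follows the standard concentration--compactness and bubbling argument, which is essentially Struwe's own route, so the overall plan is sound. However, there is a concrete gap in the iteration as you describe it: after subtracting the first bubble, the remainder $v_k = u_k - U[z_1^{(k)}, \lambda_1^{(k)}]$ is \emph{not} non-negative, so it does not satisfy the same hypotheses, and a weak limit of any rescaling of $v_k$ cannot be classified by Caffarelli--Gidas--Spruck, which requires positivity. This is not a cosmetic issue: the critical equation on $\mathbb{R}^n$ admits many sign-changing $\dot H^1$ solutions, so the classification genuinely fails without the sign condition. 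The standard fix, which your sketch does not mention, is to extract every successive profile by rescaling the \emph{original} non-negative sequence $u_k$ at the new concentration scale and center; the orthogonality of scales and centers, which follows from $v_k\rightharpoonup 0$ together with the Br\'ezis--Lieb decoupling you already invoke, ensures that the previously subtracted bubble vanishes weakly under the new rescaling, so the weak limits of the rescaled $u_k$ and the rescaled $v_k$ coincide and are non-negative. With that correction your sketch closes; the $H^{-1}$ decoupling of the nonlinear residual that you flag as the main obstacle is a real estimate but is the more routine of the two issues, handled by splitting $|U+v_k|^{p-1}(U+v_k) - U^{p} - |v_k|^{p-1}v_k$ and controlling the cross terms in $L^{2n/(n+2)}$ by H\"older and Sobolev.
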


One can also show that the family of $U[z_{i}^{(k)}, \lambda_{i}^{(k)}]$ has the so-called weak interaction.
\begin{definition}[Interaction of Talenti bubbles]
Let $U[z_i,\l_i]$ and $U[z_j,\l_j]$ be two bubbles. Define the interaction of them by
\begin{align}\label{inter-bubble}
    q(z_i,z_j,\lambda_i,\lambda_j)=\left(\frac{\l_i}{\l_j}+\frac{\l_j}{\l_i}+\l_i\l_j|z_i-z_j|^2\right)^{-\frac{n-2}{2}}.
\end{align}
We shall write $q_{ij}=q(z_i,z_j,\lambda_i,\lambda_j)$ if there is no confusion. Let $\left(U_{i}=U\left[z_{i},\lambda_{i}\right]\right)_{1\leq i\leq\nu}$ be a family of Talenti bubbles. We say that the family is $\delta$-interacting if \begin{align}\label{delta-interacting}
    Q:=\max\{q_{ij}:i,j=1,\cdots,\nu\}<\delta.
\end{align}
\end{definition}
Despite the difficulty of non-negativity issue, one can still investigate the problem locally. That is, if $u$ is already near to a sum of weakly-interacting Talenti bubbles in $\dot H^1$-norm, then $\|\Delta u+u|u|^{p-1}\|_{H^{-1}}$ should control the distance from $\mathcal{T}$, the manifold of sums of weakly-interacting Talenti bubbles. Along this direction, 
\citet{ciraolo2018quantitative} obtained the first quantitative estimate $dist(u,\mathcal{T})\leq C\|\Delta u+u|u|^{p-1}\|_{L^{\frac{2n}{n+2}}}$ for all $n\geq 3$ when $\nu=1$, i.e., when only one bubble is present. Later, \citet{figalli2020sharp} established the following theorem for any finite number of bubbles.
\begin{theorem}[\citet{figalli2020sharp}]
For any dimension $3 \leq n \leq 5$ and $\nu \in \mathbb{N},$ there exist a small constant $\delta=\delta(n, \nu)>0$ and a large constant $C=C(n, \nu)>0$ such that the following statement holds. Let $u \in \dot{H}^{1}\left(\mathbb{R}^{n}\right)$ be a function such that
\[
\left\|\nabla u-\sum_{i=1}^{\nu} \nabla \tilde{U}_{i}\right\|_{L^{2}} \leq \delta,
\]
where $(\tilde{U}_{i})_{1 \leq i \leq \nu}$ is a $\delta$-interacting family of Talenti bubbles. Then there exist $\nu$ Talenti bubbles $U_{1}, U_{2}, \ldots, U_{\nu}$ such that
\begin{align}\label{35-r}
\left\|\nabla u-\sum_{i=1}^{\nu} \nabla U_{i}\right\|_{L^{2}} \leq C\left\|\Delta u+u|u|^{p-1}\right\|_{H^{-1}}.
\end{align}
\end{theorem}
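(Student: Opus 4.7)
\emph{Proof proposal.} The plan is to run a Lyapunov–Schmidt reduction near $\sum_i \tilde U_i$ and then close the estimate via a bootstrap between the residual $\rho$ and the maximal bubble interaction $Q$.

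First, I would select optimal bubbles $U_i = U[z_i,\lambda_i]$ by minimizing $\|\nabla(u - \sum_i U[z_i,\lambda_i])\|_{L^2}^2$ over parameters in a small neighborhood of $(\tilde z_i, \tilde \lambda_i)$. The minimum is attained by compactness, and the $\delta$-interacting property of $(\tilde U_i)$ transfers (up to constants) to $(U_i)$. The Euler–Lagrange conditions give $\rho := u - \sigma$, with $\sigma := \sum_i U_i$, orthogonal in $\dot H^1$ to the tangent space
\[T_\sigma\mathcal T = \mathrm{span}\{\partial_{\lambda_i} U_i,\; \partial_{z_i^k} U_i : 1 \leq i \leq \nu,\; 1 \leq k \leq n\}.\]
Subtracting $\sum_i(-\Delta U_i - U_i^p) = 0$ from the equation $-\Delta u = |u|^{p-1}u + f$ (with $\|f\|_{H^{-1}} = \Gamma(u)$) rewrites the problem as
\[L\rho := -\Delta\rho - p\sigma^{p-1}\rho \;=\; h + N(\rho) + f,\]
where $h := \sigma^p - \sum_i U_i^p$ is the bubble interaction error and $N(\rho) := (\sigma+\rho)^p - \sigma^p - p\sigma^{p-1}\rho$ is the superlinear remainder.

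Second, the linearized operator $L$ is coercive on the orthogonal complement of $T_\sigma \mathcal T$ under the weak interaction hypothesis $\delta \ll 1$; this follows from the well-known spectral gap at a single bubble together with a perturbative argument. Pairing the equation with $\rho$ and using coercivity yields
\[\|\rho\|_{\dot H^1}^2 \;\lesssim\; \|h\|_{H^{-1}}\|\rho\|_{\dot H^1} + |\langle N(\rho),\rho\rangle| + \Gamma(u)\|\rho\|_{\dot H^1}.\]
For $3 \leq n \leq 5$ the exponent $p = (n+2)/(n-2) \geq 2$ makes the nonlinearity manageable: a pointwise estimate $|N(\rho)| \lesssim \sigma^{p-2}|\rho|^2 + |\rho|^p$ together with Hölder and Sobolev inequalities shows $|\langle N(\rho),\rho\rangle| = o(\|\rho\|_{\dot H^1}^2)$ once $\|\rho\|_{\dot H^1}$ is small. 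A direct pointwise expansion of $h$ near each bubble, combined with the Hardy–Littlewood–Sobolev dual estimate, gives $\|h\|_{H^{-1}} \lesssim Q$ where $Q := \max_{i\neq j} q_{ij}$. Absorbing the nonlinear term leaves the partial estimate
\[\|\rho\|_{\dot H^1} \;\lesssim\; Q + \Gamma(u).\]

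Third and most importantly, I would control $Q$ by $\Gamma(u)$ by testing the equation against the kernel elements $\partial_{\lambda_j} U_j$ and $\partial_{z_j^k} U_j$. The orthogonality of $\rho$ and integration by parts reduce the linear contribution to $-\langle \rho,(p\sigma^{p-1}-pU_j^{p-1})\partial_{\lambda_j}U_j\rangle$, which is $o(Q)\cdot\|\rho\|_{\dot H^1}$, while the right-hand side splits into a $\nu\times\nu$ linear system in the leading interaction integrals $\int U_i^{p-1}U_j\,\partial_{\lambda_j}U_j$. A direct computation using dilation invariance shows this interaction matrix is non-degenerate. For $n \leq 5$ the Hölder exponents pair favorably so that the nonlinear term $\langle N(\rho),\partial_{\lambda_j}U_j\rangle$ is strictly lower order in $Q$, and one obtains
\[Q \;\lesssim\; \Gamma(u) + \|\rho\|_{\dot H^1}^{1+\varepsilon}\]
for some $\varepsilon>0$. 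Substituting into the previous display and absorbing the superlinear term closes the bootstrap to $\|\rho\|_{\dot H^1} \lesssim \Gamma(u)$, which is precisely \eqref{35-r}.

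The main obstacle is the accounting in the third step: verifying non-degeneracy of the interaction matrix and ensuring that every nonlinear remainder is strictly subordinate to $Q$. Precisely this accounting fails in dimensions $n \geq 6$, since the weight $\sigma^{p-2}$ ceases to lie in the relevant Lebesgue space near bubble cores and the nonlinearity of order $|\rho|^p$ with $p<2$ is no longer absorbed into $Q$ — which is exactly what motivates the sharper dimensional analysis of the present paper.
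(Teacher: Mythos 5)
Your proposal is mathematically sound, but note that the paper you are reading does not actually prove this theorem: it is quoted from \citet{figalli2020sharp}, and the present authors simply cite it as background before developing the new machinery needed for $n\geq 6$. What you have sketched is, in essence, the Figalli--Glaudo argument itself: (i) minimize over the bubble manifold to obtain an orthogonal $\rho$; (ii) use coercivity of $-\Delta-p\sigma^{p-1}$ on the orthogonal complement together with the bound $\|\sigma^p-\sum_i U_i^p\|_{H^{-1}}\lesssim Q$ (which holds because $p>2$ when $n\leq 5$, so each $\|U_i^{p-1}U_j\|_{L^{2n/(n+2)}}\approx q_{ij}$) to get $\|\rho\|_{\dot H^1}\lesssim Q+\Gamma$; and (iii) pair the equation with $\lambda_k\partial_{\lambda_k}U_k$ to extract $Q\lesssim\Gamma+\text{(higher order)}$ and close a bootstrap. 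All of these steps are carried out, with minor cosmetic differences, in the cited work and again (for the harder range $n\geq 6$) in Section 2 of the present paper via Lemma~\ref{lem:I1Uk} and the estimate \eqref{pre-est}.

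Two small imprecisions worth tightening. First, the linear contribution $\langle\rho,(p\sigma^{p-1}-pU_j^{p-1})\partial_{\lambda_j}U_j\rangle$ is not ``$o(Q)\cdot\|\rho\|_{\dot H^1}$'' as written but rather $O(Q)\cdot\|\rho\|_{\dot H^1}$; one uses the pointwise inequality $(\sigma^{p-1}-U_j^{p-1})U_j\leq\sigma^p-\sum_i U_i^p$ and the $L^{2n/(n+2)}$ bound to get $\lesssim Q\|\rho\|_{\dot H^1}$, which is $o(Q)$ only because $\|\rho\|_{\dot H^1}=o(1)$; the logical chain should be stated in that order. Second, describing the extraction of $Q\lesssim\Gamma$ as invoking a ``non-degenerate $\nu\times\nu$ interaction matrix'' is not quite how it is done in either paper: the leading interaction terms $\int U_i^p\,\lambda_k\partial_{\lambda_k}U_k$ can have different signs depending on the relative sizes of $\lambda_i,\lambda_k$, so one does not solve a linear system but instead \emph{chooses} the test index $k$ so that all the $O(Q)$ contributions on the right-hand side have a common sign (in the present paper, $\lambda_k$ maximal among the bubbles realizing $q_{ij}\approx Q$; see the lemma around \eqref{I1Q}). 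With those two clarifications your outline is a faithful summary of the $3\leq n\leq 5$ proof.

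What distinguishes the present paper's argument for $n\geq 6$ is exactly what you flag at the end: the H\"older/Sobolev bookkeeping no longer yields $Q\lesssim\Gamma$ directly because $p\leq 2$, so the authors replace the crude energy estimate on $\rho$ by a pointwise construction of a first approximation $\rho_0$ solving a projected problem, controlled in the weighted norms $\|\cdot\|_{*},\|\cdot\|_{**}$, with a remainder $\rho_1$ that is genuinely higher order; this is what produces the extra powers $Q^{p/2}$ and $Q|\log Q|^{1/2}$.
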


 However,  Figalli and Glaudo constructed some counter examples that \eqref{35-r} does not work in $n\geq6$ if $\nu>1$. They conjectured that one needs to modify the RHS of \eqref{35-r} to $\Gamma |\log \Gamma|$ when $n=6$ and to $|\Gamma|^{\gamma}$ for some $\gamma<1$ when $n\geq7$, where $\Gamma=\left\|\Delta u+u|u|^{p-1}\right\|_{H^{-1}}$. However, the exact value of $\gamma$ is not known.

In this paper, we give affirmative answers to both of them.

\begin{theorem}\label{thm:main}
Suppose $n\geq 6$. There exist a small enough $\delta =\delta(n,\nu)>0$ and a large constant $C=C(n, \nu)>0$ such that the following statement holds. Let $u \in \dot{H}^{1}\left(\mathbb{R}^{n}\right)$ be a function such that
\begin{align}\label{u-cond}
\left\|\nabla u-\sum_{i=1}^{\nu} \nabla \tilde{U}_{i}\right\|_{L^{2}} \leq \delta,
\end{align}
where $(\tilde{U}_{i})_{1 \leq i \leq \nu}$ is a $\delta$-interacting family of Talenti bubbles. Then there exist $\nu$ Talenti bubbles $U_{1}, U_{2}, \ldots, U_{\nu}$ such that
\begin{align}\label{f-ieq}
\left\|\nabla u-\sum_{i=1}^{\nu} \nabla U_{i}\right\|_{L^{2}}  \leq C\begin{cases} \Gamma |\log \Gamma|^{\frac12}&\quad\text{if }\ n=6,\\ \Gamma^{\frac{p}{2}}&\quad\text{if }\ n\geq 7,\end{cases}
\end{align}
for $\Gamma=\left\|\Delta u+u|u|^{p-1}\right\|_{H^{-1}}$.
\end{theorem}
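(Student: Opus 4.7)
The plan is to implement a Lyapunov-Schmidt reduction around an optimally chosen bubble tree, with sharp asymptotic tracking of the bubble-interaction terms that is forced by the high-dimensional regime $n\geq 6$. First, using \eqref{u-cond} and the manifold structure of $\mathcal T$, I select optimal parameters $(z_i,\lambda_i)$ minimizing $\|\nabla(u-\sigma)\|_{L^2}$ with $\sigma=\sum_{i=1}^\nu U_i$, and write $\rho:=u-\sigma$. The remainder satisfies the orthogonality conditions $\langle\nabla\rho,\nabla Z_i^a\rangle_{L^2}=0$ for every tangent vector $Z_i^a\in\{\lambda_i\partial_{\lambda_i}U_i,\partial_{z_i^k}U_i\}$, and by a non-degeneracy argument the optimized family remains weakly interacting, so $Q=\max_{i\ne j} q_{ij}$ stays small.

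Writing $\Delta u+|u|^{p-1}u=h$ with $\|h\|_{H^{-1}}=\Gamma$, the equation for $\rho$ becomes
\[
-\Delta\rho-p\sigma^{p-1}\rho \;=\; I+N(\rho)-h,\qquad I:=\sigma^p-\sum_i U_i^p,
\]
with $N(\rho)$ at least quadratic in $\rho$ and $\|N(\rho)\|_{H^{-1}}\lesssim\|\nabla\rho\|_{L^2}^{\min(2,p)}$ (the $\min(2,p)$ already foreshadows the transition at $n=6$). Coercivity of the linearized operator on the orthogonal complement of $\mathrm{span}\{Z_i^a\}$ then gives
\[
\|\nabla\rho\|_{L^2}\;\lesssim\;\|I\|_{H^{-1}}+\|N(\rho)\|_{H^{-1}}+\Gamma.
\]

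The main obstacle is that in $n\geq 6$ the naive pairing bound $\|I\|_{H^{-1}}\lesssim Q$ is sharp, and this alone is not enough: it is precisely the source of the Figalli-Glaudo counterexamples. The remedy is to extract an independent control of $Q$ via projection. Testing the $\rho$-equation against each $Z_i^a$ and using $\rho\perp Z_i^a$ together with $Z_i^a\in\ker(-\Delta-pU_i^{p-1})$, the linear terms collapse to leading order and one gets
\[
\Bigl|\int I\,Z_i^a\,dx\Bigr|\;\lesssim\;\Gamma\,\|Z_i^a\|_{\dot H^1}+O\!\bigl(\|\nabla\rho\|_{L^2}^{\min(2,p)}\bigr).
\]
A careful, dimension-dependent asymptotic expansion of the LHS—splitting $\mathbb R^n$ into inner and transition regions of each bubble—identifies $\int I\,Z_i^a$ as comparable to $Q$ up to an explicit dimensional constant, with a genuine logarithmic factor arising in $n=6$, which yields $Q\lesssim\Gamma+\|\nabla\rho\|_{L^2}^{\min(2,p)}$.

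Finally, a sharper estimate of $\|I\|_{H^{-1}}$ obtained by testing against optimal functions localized in the transition annuli improves the crude bound to $\|I\|_{H^{-1}}\lesssim Q|\log Q|^{1/2}$ for $n=6$ and $\lesssim Q^{p/2}$ for $n\geq 7$. Substituting into the coercivity estimate produces the coupled system
\[
\|\nabla\rho\|_{L^2}\lesssim\Gamma+Q^{p/2}\ (\times|\log Q|^{1/2}\text{ if }n=6),\qquad Q\lesssim\Gamma+\|\nabla\rho\|_{L^2}^{\min(2,p)},
\]
which iterates to $Q\lesssim\Gamma$ and hence the announced bounds $\|\nabla\rho\|_{L^2}\lesssim\Gamma|\log\Gamma|^{1/2}$ for $n=6$ and $\lesssim\Gamma^{p/2}$ for $n\geq 7$. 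The hardest step is the pair of matched asymptotics—the sharp lower bound on $\int I\,Z_i^a$ and the sharp upper bound on $\|I\|_{H^{-1}}$—which must be carried out with precision in the interaction regions between bubbles; a loose estimate on either side destroys the $p/2$ exponent and is exactly what prevented earlier methods from reaching the sharp rate.
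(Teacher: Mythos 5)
Your overall framework—optimally fitting $\sigma=\sum U_i$, deriving a coercivity estimate for $\rho$ via the spectral gap, and a projection estimate by testing against $Z_i^a$—is the standard Lyapunov--Schmidt skeleton, and your observation that $\|I_1\|_{H^{-1}}\lesssim Q^{p/2}$ for $n\geq 7$ (rather than $Q$) is correct and is indeed where the exponent $p/2$ originates. However, there is a genuine gap in your projection estimate, and it is exactly the obstruction that this paper is written to overcome.

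After using the orthogonality and $Z_i^a\in\ker(\Delta+pU_i^{p-1})$, the surviving linearized cross-term is
\[
\int\bigl(\sigma^{p-1}-U_k^{p-1}\bigr)\rho\,Z_k^a .
\]
By H\"older and $|\sigma^{p-1}-U_k^{p-1}|\lesssim\sum_{i\neq k}U_i^{p-1}$, the sharp bound obtainable from $\dot H^1$-control of $\rho$ alone is
\[
\Bigl|\int(\sigma^{p-1}-U_k^{p-1})\rho\,Z_k^a\Bigr|
\;\lesssim\;\|\nabla\rho\|_{L^2}\,\sum_{i\neq k}\|U_i^{p-1}U_k\|_{L^{\frac{2n}{n+2}}}
\;\approx\; Q^{p-1}\,\|\nabla\rho\|_{L^2},
\]
not $O(\|\nabla\rho\|_{L^2}^{\min(2,p)})$ as you claim. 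There is no way to convert $Q^{p-1}$ into $\|\nabla\rho\|_{L^2}^{p-1}$, since a priori $\|\nabla\rho\|_{L^2}$ might be as large as $Q^{p/2}\gg Q$. Consequently the corrected coupled system is
\[
\|\nabla\rho\|_{L^2}\lesssim Q^{p/2}+\Gamma,\qquad
Q\lesssim Q^{p-1}\|\nabla\rho\|_{L^2}+\|\nabla\rho\|_{L^2}^{p}+\Gamma,
\]
which after substitution gives $Q\lesssim Q^{3p/2-1}+Q^{p^2/2}+\Gamma$. This closes only when both $3p/2-1>1$ and $p^2/2>1$, i.e.\ only for a \emph{bounded} range of dimensions (roughly $n\leq 11$). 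For $n$ large the iteration is vacuous, and the theorem is not proved. (The paper spells this failure mode out in the remark following \eqref{eq:xi-main-7}, where the even weaker bootstrap $Q\lesssim Q^{2(p-1)}+\Gamma$ is shown to stall already at $n\geq 10$.)

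The missing ingredient is a control of $\rho$ that is stronger than an $\dot H^1$ bound: the paper decomposes $\rho=\rho_0+\rho_1$, where $\rho_0$ solves the nonlinear reduced problem \eqref{rho0-non} and is shown, by a blow-up and barrier argument divided into inner, neck, and exterior regions (Lemma~\ref{lem:prior-est} and Proposition~\ref{prop:rho0-exist}), to satisfy a \emph{pointwise} weighted bound $|\rho_0|\leq CW$. That pointwise bound is what makes the projection work in all dimensions: one estimates the cross term directly as a weighted integral,
\[
\Bigl|\int(\sigma^{p-1}-U_k^{p-1})\rho_0\,Z_k^a\Bigr|
\;\lesssim\;\int\Bigl(\sigma^p-\sum_i U_i^p\Bigr)W
\;\lesssim\;\int V\,W\;\lesssim\;Q^{p}\;=\;o(Q),
\]
which is strictly better than $Q^{p-1}\|\nabla\rho_0\|_{L^2}\approx Q^{3p/2-1}$ whenever $n>6$; the remainder $\rho_1$ satisfies $\|\nabla\rho_1\|_{L^2}\lesssim Q^2+\Gamma$. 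Your "matched asymptotics in the transition annuli" intuition for estimating $\|I_1\|_{H^{-1}}$ is sound, but it only repairs the coercivity side; the projection side genuinely needs the pointwise model of $\rho$, not just a sharper annular $H^{-1}$ pairing.
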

Note that our theorem completely solves the remaining cases in higher dimensions $n\geq 6$. Moreover, we improve the conjecture of \cite{figalli2020sharp} when $n=6$. After finding out this intriguing power $\frac{p}{2}$, we went back to check the counter examples in \cite{figalli2020sharp}. Their examples show that there exists $u\in \dot H^1(\mathbb{R}^n)$ when $n=7$ such that
\[\inf_{\substack{z_1,\cdots,z_\nu\in \mathbb{R}^n\\ \l_1,\cdots,\l_\nu>0}}\|\nabla u-\sum_{i}\nabla U_i\|_{L^2}\gtrsim\Gamma^{\frac{9}{10}}.\]
Notice the fact that $\frac{9}{10}=\frac{p}{2}$ when $n=7$ exactly implies that \eqref{f-ieq} is sharp in this case. Indeed, we can prove that our result \eqref{f-ieq} is sharp for all $n\geq 6$.
\begin{theorem}\label{thm:7counter}
For sufficiently large $R>0$, there exists some $\rho$ such that if  $u=U[-Re_1,1]+U[Re_1,1]+\rho$, then
\begin{align}
   \inf_{\substack{z_1,z_2\in \mathbb{R}^n\\ \l_1,\l_2>0}}\|\nabla u-\sum_{i=1}^2\nabla U[z_i,\l_i]\|_{L^2}\gtrsim \begin{cases} \Gamma |\log \Gamma|^{\frac12}&\quad\text{if }\ n=6,\\ \Gamma^{\frac{p}{2}}&\quad\text{if }\ n\geq 7. \end{cases}
\end{align}
\end{theorem}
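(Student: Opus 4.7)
The strategy is to construct a specific $\rho$ via matched asymptotics so that both $\Gamma$ and the distance to the bubble manifold simultaneously saturate the bounds of Theorem~\ref{thm:main}. Set $U_1=U[-Re_1,1]$ and $U_2=U[Re_1,1]$ with $R\gg 1$, so that $q:=q_{12}\sim R^{-(n-2)}$. For $i\in\{1,2\}$ let $L_i:=\Delta+pU_i^{p-1}$, whose $\dot{H}^1$-kernel consists of the $n+1$ tangent directions at $U_i$, and let $\rho_i$ be the unique $\dot{H}^1$-solution, orthogonal to $\ker L_i$, of
\[
L_i\rho_i=-pU_i^{p-1}U_{3-i}\quad\text{in }\mathbb{R}^n.
\]
Set $\rho:=\rho_1+\rho_2$ and $u:=U_1+U_2+\rho$. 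This is the ``second-order'' Lyapunov--Schmidt correction tuned against the precise source, not a truncation of the naive profile.

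A matched-asymptotic analysis identifies two regimes for $\rho_i$. In the inner region $|y_i|:=|x-z_i|\lesssim R$ we can replace $U_{3-i}$ by its value $\sim q$ at $z_i$, so $\rho_i\approx -qv_i$ with $v_i$ solving $L_iv_i=-pU_i^{p-1}$; the profile $v_i$ has only the slow decay $v_i(y)\sim c|y|^{-2}$. In the outer region $|y_i|\gtrsim R$, the true source $pU_i^{p-1}U_{3-i}$ decays like $|y|^{-(n+2)}$, so $\rho_i$ transitions to a Talenti-type tail $\sim q^{2/(n-2)}|y_i|^{2-n}$. Matching at $|y_i|\sim R$ yields
\[
\|\rho\|_{\dot{H}^1}^2\sim q^2\int_1^R r^{n-7}\,dr\sim\begin{cases}q^2|\log q|&\text{if }n=6,\\ q^2R^{n-6}=q^p&\text{if }n\geq 7,\end{cases}
\]
i.e.\ $\|\rho\|_{\dot{H}^1}\sim q|\log q|^{1/2}$ for $n=6$ and $\|\rho\|_{\dot{H}^1}\sim q^{p/2}$ for $n\geq 7$. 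Since the projection of $\rho$ onto the full tangent space at $(U_1,U_2)$ is of strictly smaller order $\sim q$ (the bubble separation makes cross-pairings small), a first-order expansion gives $\mathrm{dist}(u,\mathcal{T})\geq(1-o(1))\|\rho\|_{\dot{H}^1}$.

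For the upper bound on $\Gamma$, substitute $u$ into $\Delta u+u|u|^{p-1}$; using $\Delta U_i+U_i^p=0$ and Taylor,
\[
\Delta u+u|u|^{p-1}=R_0+L\rho+\mathcal{N},
\]
where $L=\Delta+p(U_1+U_2)^{p-1}$, $R_0=(U_1+U_2)^p-U_1^p-U_2^p$, and $\mathcal{N}$ is the nonlinear Taylor remainder. Using $L_i\rho_i=-pU_i^{p-1}U_{3-i}$, the linear residual decomposes as
\[
R_0+L\rho=\sum_{i=1}^{2}p\bigl[(U_1+U_2)^{p-1}-U_i^{p-1}\bigr]\rho_i+\bigl(R_0-pU_1^{p-1}U_2-pU_2^{p-1}U_1\bigr),
\]
the first sum representing the near-bubble linearization defect and the second being the quadratic-in-$U_j$ Taylor remainder of $R_0$ (of pointwise size $\sim U_i^{p-2}U_{3-i}^2$). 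Pairing each piece with normalized $\dot{H}^1$ test functions and tracking the inner/outer regimes of $\rho$, together with the standard Sobolev estimate on $\mathcal{N}$, yields $\Gamma\lesssim q$. Combining: for $n\geq 7$, $\mathrm{dist}(u,\mathcal{T})\gtrsim q^{p/2}\sim\Gamma^{p/2}$; for $n=6$, $\mathrm{dist}(u,\mathcal{T})\gtrsim q|\log q|^{1/2}\sim\Gamma|\log\Gamma|^{1/2}$.

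The principal obstacle is the sharp $H^{-1}$-bound $\|R_0+L\rho+\mathcal{N}\|_{H^{-1}}\lesssim q$: naive $L^{2n/(n+2)}$-estimates on individual pieces exceed $q$ in moderate dimensions (for instance, $\|q^2U_1^{p-2}\|_{L^{2n/(n+2)}}\sim q^{p/2}$ near the bubble), so the argument must proceed through the full $\dot{H}^1$-pairing and exploit cancellations between the inner and outer regimes of $\rho$ across the transition zone $|y_i|\sim R$. In the borderline case $n=6$, pinning down the precise exponent $1/2$ on the logarithm (rather than a full $\log$) requires delicate bookkeeping of cutoff errors and careful integration by parts in the linearized problem.
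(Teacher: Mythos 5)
The core idea (matched asymptotics showing that the first correction already carries $\dot H^1$-mass of order $q^{p/2}$, resp.\ $q|\log q|^{1/2}$) is the same as in the paper, but your specific construction of $\rho$ has a gap that I do not think is repairable without changing the construction. You define $\rho_i$ by the \emph{global} linear problem $L_i\rho_i=-pU_i^{p-1}U_{3-i}$. Because the source $-pU_2^{p-1}U_1$ has a second bump of magnitude $\sim q^{p-1}\langle y_1\rangle^{2-n}$ centred at $z_1$ (not just a Talenti tail), $\rho_2$ acquires a Newtonian contribution $\sim q^{p-1}\langle y_1\rangle^{-2}$ near $z_1$, which for $n\geq7$ \emph{dominates} the inner profile $\rho_1\sim -q v_1\sim q\langle y_1\rangle^{-2}$. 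This invalidates the matching estimate $\|\rho\|_{\dot H^1}\sim q^{p/2}$. More seriously, after your cancellation the leading residual near $z_1$ is $R_0-pU_1^{p-1}U_2-pU_2^{p-1}U_1\approx -pU_2^{p-1}U_1\sim -q^{p-1}\langle y_1\rangle^{2-n}$, whose $L^{2n/(n+2)}$ (hence $H^{-1}$) norm is $\sim q^{p-1}=R^{-4}$, \emph{not} $\lesssim q=R^{2-n}$; for $n\geq7$ this is strictly larger than $q$, and there is no identifiable term in your decomposition with which it cancels, so $\Gamma\gtrsim q^{p-1}\gg q$. Since then $\Gamma^{p/2}\sim q^{p(p-1)/2}\gg q^{p/2}\gtrsim\mathrm{dist}(u,\mathcal T)$, your example would in fact \emph{fail} to saturate the inequality for $n\geq7$. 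The remark that this should be rescued by ``cancellations between the inner and outer regimes across $|y_i|\sim R$'' is exactly where the argument breaks: no such cancellation is available, because the excess term lives in the inner region of $U_1$ and comes from the \emph{un}-truncated source of $\rho_2$.

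The paper sidesteps all of this by choosing $\rho=\tilde\rho$ to solve the \emph{full nonlinear} reduced Lyapunov--Schmidt problem, so that $\Delta u+|u|^{p-1}u=-\sum_{j,a}\tilde c^j_a\tilde U_j^{p-1}\tilde Z_j^a$ lies exactly in a finite-dimensional span; then $\Gamma\approx\sum|\tilde c^j_a|\lesssim q$ is automatic and there is no residual to fight. The matched-asymptotic profiles $\phi_1,\phi_2,\phi_3$ enter only as an approximation to $\rho$ (with the linear sources $R^{n-2}U_i^{p-1}U_{3-i}\chi_{\{|x-z_i|<R\}}$ \emph{cut off} to the core of $U_i$, precisely to avoid the spurious long-range contribution you run into), and are used to compute a lower bound on $\int(\sigma^p-U_1^p-U_2^p)\rho$. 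If you want to salvage your route, you would need to either (i) replace the global sources by the cut-off cross terms and glue an outer profile as in the paper, at which point you are re-deriving the paper's construction and still need the nonlinear reduction for the $\Gamma$ bound; or (ii) keep the global linear $\rho_i$ and demonstrate the putative cancellation explicitly, which I believe is impossible since the $q^{p-1}$ term is produced by a genuine inner resonance of $\rho_2$ at the wrong bubble. Also, a minor point: $L_i\rho_i=-pU_i^{p-1}U_{3-i}$ is not solvable with $\rho_i\perp\ker L_i$ because the right-hand side is not orthogonal to $\ker L_i$; you need to project, i.e.\ add Lagrange multipliers $U_i^{p-1}Z_i^a$ to the right-hand side.
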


As a consequence of Theorem \ref{thm:main}, we obtain the following sharp quantitative estimates of Struwe's decomposition.
\begin{corollary}\label{cor:struwe}
Suppose $n\geq 6$. There exist a small enough $\delta =\delta(n,\nu)>0$ and a large constant $C=C(n, \nu)>0$ such that the following statement holds. For any non-negative function $u \in \dot H^{1}\left(\mathbb{R}^{n}\right)$ such that
\[
\left(\nu-\frac{1}{2}\right) S^{n} \leq \int_{\mathbb{R}^{n}}|\nabla u|^{2} \leq\left(\nu+\frac{1}{2}\right) S^{n},
\]
there exist $\nu$ Talenti bubbles $U_{1}, U_{2}, \ldots, U_{\nu}$ such that
\begin{align}
\left\|\nabla u-\sum_{i=1}^{\nu} \nabla U_{i}\right\|_{L^{2}}  \leq C\begin{cases} \Gamma |\log \Gamma|^{\frac12}&\quad\text{if }\ n=6,\\ \Gamma^{\frac{p}{2}}&\quad\text{if }\ n\geq 7,\end{cases}
\end{align}
for $\Gamma=\left\|\Delta u+u|u|^{p-1}\right\|_{H^{-1}}$. Furthermore, for any $i \neq j$, the interaction between the bubbles can be estimated as
\[
\int_{\mathbb{R}^{n}} U_{i}^{p} U_{j} \leq C\left\|\Delta u+u^{p}\right\|_{H^{-1}}.
\]

\end{corollary}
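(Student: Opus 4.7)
The plan is to deduce Corollary \ref{cor:struwe} from Theorem \ref{thm:main} combined with Struwe's qualitative decomposition (Theorem \ref{thm:str}). First, the case $\Gamma(u)\ge c>0$ is trivial: the $\dot H^1$-norm of $u$ is a priori bounded by the energy assumption, and any choice of $\nu$ widely separated bubbles produces a bound of the form $\|\nabla u-\sum_i\nabla U_i\|_{L^2}\leq C\leq C'\Gamma^{p/2}$. Hence it suffices to argue by contradiction and suppose there exists a sequence of non-negative $u_k$ with energy in $[(\nu-\tfrac12)S^n,(\nu+\tfrac12)S^n]$ and $\Gamma(u_k)\to 0$ for which the stability conclusion fails for every choice of $\nu$ bubbles.

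Applying Theorem \ref{thm:str} to such a sequence and passing to a subsequence, one extracts tuples $(z_i^{(k)},\lambda_i^{(k)})_{1\leq i\leq\nu}$ with
\[
\Bigl\|\nabla u_k-\sum_{i=1}^{\nu}\nabla U[z_i^{(k)},\lambda_i^{(k)}]\Bigr\|_{L^2}\longrightarrow 0.
\]
A structural feature of the profile decomposition underlying Struwe's theorem is that distinct profiles are asymptotically orthogonal, so that their pairwise interactions $q_{ij}^{(k)}$ tend to zero; consequently, for every prescribed $\delta>0$ the family $(U[z_i^{(k)},\lambda_i^{(k)}])$ is $\delta$-interacting once $k$ is large. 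Thus $u_k$ satisfies the hypothesis of Theorem \ref{thm:main} with $\tilde U_i=U[z_i^{(k)},\lambda_i^{(k)}]$, producing new bubbles for which \eqref{f-ieq} holds and contradicting the choice of $u_k$.

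The interaction bound $\int U_i^p U_j\le C\Gamma$ does not follow from \eqref{f-ieq} itself, since in dimensions $n\ge 6$ the right-hand side of \eqref{f-ieq} is weaker than linear in $\Gamma$. Instead it is an intermediate output of the proof of Theorem \ref{thm:main}: pairing the error $\Delta u+u|u|^{p-1}$ against the adapted zero modes $\partial_{\lambda_j}U_j$ and $\partial_{z_j}U_j$ of the linearized operator produces a finite-dimensional system whose solvability forces $\max_{i\neq j}q_{ij}\le C\Gamma$, and the elementary asymptotic $\int_{\mathbb{R}^n}U_i^pU_j=c_n q_{ij}(1+o(1))$, valid for weakly interacting bubbles, then closes the argument. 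The main subtlety I expect is not in the compactness step but rather in confirming, from the inside of the proof of Theorem \ref{thm:main}, that the intermediate bound $q_{ij}\le C\Gamma$ is clean (without the $|\log\Gamma|^{1/2}$ or $\Gamma^{p/2-1}$ loss present in \eqref{f-ieq}); once this is secured, the corollary follows mechanically.
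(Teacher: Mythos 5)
Your proposal is correct and follows essentially the same route as the paper, which simply refers to Corollary~3.4 of Figalli--Glaudo: dispose of the regime where $\Gamma$ is bounded below, then use Struwe's decomposition (together with the standard fact that the extracted profiles are asymptotically non-interacting) to place $u$ in the hypotheses of Theorem~\ref{thm:main}, via a compactness/contradiction argument converting the qualitative statement into a quantitative one. You also correctly identify that the linear-in-$\Gamma$ interaction bound is not a consequence of \eqref{f-ieq} but of the intermediate lemma $Q\lesssim\|f\|_{H^{-1}}$ established inside the proof of Theorem~\ref{thm:main}, combined with $\int U_i^pU_j\approx q_{ij}\le Q$.
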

\bigskip

Finally we remark that recently there has been a growing interest in understanding quantitative stability for functional and geometric inequalities,  due to important applications to problems in the calculus of variations and PDEs. For extension of (\ref{bianchi}) to Sobolev inequality  with general exponents we refer to \citet{figalli2019} and the references therein. Stability results on Sobolev inequality  can be used to obtain quantitative rates of convergence for fast diffusion equations. We refer to \citet{bonforte2020, DelPinoSaez2001} and the references therein.
There is also  a rich literature on the study of  quantitative versions of the isoperimetric inequality and other geometric inequalities  analogous to the Sobolev inequality. (A nice description of comparison  between Sobolev inequality and isoperimetric inequality can be found in \citet{figalli2020sharp}.)  We refer to \citet{BPV2015, maggi2019, maggi2018, figalli2020sharp, fusco2008, maggi2008} and the references therein.

\subsection{Sketch of the proof} We briefly explain the ideas of our proof. Throughout this paper, we shall write that $a \lesssim b$ (resp. $a \gtrsim b)$ if $a \leq C b$ (resp. $C a \geq b)$ where $C$ is a constant depending only on the dimension $n$ and on the number of bubbles $\nu$. The constant $C$ may change line by line.  Also, we say that $a \approx b$ if $a \lesssim b$ and $a \gtrsim b$. The integral $\int$ always means $\int_{\mathbb{R}^n}$ unless specified. We always denote with $o(1)$ any quantity  that goes to 0 when $\delta$ goes to 0. The common notion $o(Q)$ means $o(Q)/Q$ goes to 0 when $Q$ goes to 0.

 Suppose $u$ satisfies \eqref{u-cond} with a family of $\delta$-interacting bubbles. Consider the following variational problem
\[\delta(u):=\inf_{\substack{z_1,\cdots,z_\nu\in \mathbb{R}^n\\\l_1,\cdots,\l_\nu>0}}\left\|\nabla u-\nabla\left(\sum_{i=1}^{\nu} U\left[\bar{z}_{i}, \bar{\lambda}_{i}\right]\right)\right\|_{L^{2}}.
\]
It is easy to know that (for instance, see \cite[Appendix A]{bahri1988nonlinear}) if $\delta$ is small enough then such an infimum is achieved by some
\begin{equation}
    \label{sigmadef}
\sigma:=\sum_{i=1}^{\nu} U\left[z_{i}, \lambda_{i}\right].
\end{equation}
Let us denote $U_{i}:=U\left[z_{i}, \lambda_{i}\right]$. Since the bubbles $\tilde{U}_{i}$ are $\delta$-interacting, the family $\left( U_{i}\right)_{1 \leq i \leq \nu}$ is $\delta^{\prime}$-interacting for some $\delta^{\prime}$ that goes to 0 as $\delta$ goes to 0.

Let $\rho:=u-\sigma$ be the difference between the original function and the best approximation. Then $\rho$ satisfies the equation
\begin{align}\label{rho-fir}
    \Delta \rho+[(\sigma+\rho)^p-\sigma^p]+\sigma^p-\sum_{i=1}^\nu U_i^p+f=0,
\end{align}
where $f=-\Delta u-u|u|^{p-1}$.
Moreover, $\rho$ also satisfies the following orthogonal conditions
\begin{align}\label{eq:rho-ortho}
\begin{split}
\int_{\mathbb{R}^{n}} \nabla \rho\cdot \nabla Z_{i}^a =0  ~\text { for any } 1 \leq i \leq \nu, 1\leq a\leq n+1,
\end{split}
\end{align}
where $Z_{i}^a$ are the (rescaled) derivatives of $U[z_i,\l_i]$ with respect to  $a$-th component of $z_i$ and $\l_i$ (defined in \eqref{eq:Z}).

The linearized operator of \eqref{rho-1} at $0$ is $\Delta+p\sigma^{p-1}$, which will have kernels when $\sigma$ is the sum of a family of weakly-interacting bubbles. The non-homogeneous term $\sigma^p-\sum_{i}U_i^p$ is the main data which encodes the interaction of bubbles. Since the linearized operator have kernels, $f$ should be interpreted as some Lagrange multiplier.
The key idea of this paper is to obtain a precise behavior of the first approximation of $\rho$.

To illustrate the main idea, we start with the easiest case. Assume $ U_i= U[z_i,1]$ is a family of $\delta$-interacting bubbles with the same height. Since $\delta$ is very small, $z_i$ are far from each other.  
Define  $R=\min\{\frac12|z_i-z_j|:i\neq j\}$ and then $Q\thickapprox R^{2-n}$.

By some standard finite-dimensional reduction method (see for example  \cite{delPino2003,wei2010infinitely}), given a family of $\left( U_{i}\right)_{1 \leq i \leq \nu}$ which is $\delta^{\prime}$-interacting, we can find a function $ \rho_0$ (in an appropriate space) and a family of scalars $ \left( c_a^i\right)$ such that we can solve
\begin{align}\label{rho0}
\begin{cases}
     \Delta \rho_{0}+\left[\left(\sigma+\rho_{0}\right)^{p}-\sigma^{p}\right]=-(\sigma^{p}-\sum_{j} U_{j}^{p})+\sum\limits_{i=1}^\nu\sum\limits_{a=1}^{n+1}c_{a}^i U_i^{p-1}Z_{i}^a,\\
    \int \nabla\rho_0 \cdot\nabla Z_{i}^{a}=0,\quad i=1,\cdots, \nu;\ a=1,\cdots, n+1.
\end{cases}
\end{align}
In the core of each bubble $U_j$, i.e. $\{x:|x-z_j|<R\}$, we have $U_j\gg U_k$ for $k\neq j$. Then
\begin{align}\label{intro:sUk}
    \left|\sigma^p-\sum_{k=1}^\nu U_k^p\right|\lesssim \sum_{k=1,k\neq j}^{\nu}U_j^{p-1}U_k\lesssim \frac{R^{2-n}}{1+|x-z_j|^4}=R^{2-n}\langle x-z_i\rangle^{-4}.
\end{align}
Here $\langle y\rangle=\sqrt{1+|y|^2}$. In the outer region $\Omega=\{x:|x-z_j|>R, \forall \, j=1,\cdots,\nu\}$,  one has
\[\left|\sigma^{p}-\sum_{k=1}^\nu U_{k}^{p}\right|(x) \lesssim \sum_{j=1}^\nu U_{j}^{p}(x) \lesssim \sum_{j=1}^\nu\frac{1}{1+|x-z_j|^{n+2}} \lesssim  \sum_{j=1}^\nu R^{-4}\langle x-z_j\rangle^{2-n}.\]
Thus the solution $\rho_0$ to \eqref{rho0} should have the following control
\begin{align}\label{intro:rho0bd}
    |\rho_{0}(x)| \lesssim \sum_{j=1}^\nu R^{2-n}\langle x-z_j\rangle ^{-2} \chi_{\left\{|x-z_j|<R\right\}}+R^{-4}\langle x-z_j\rangle^{4-n} \chi_{\Omega}.
\end{align}
Here $\chi_\Omega$ is the characteristic function for a set $\Omega$. This type of point-wise estimate is the key to our proof. Notice that $\rho_0$ decay very slow in the core of each bubble. Next, we shall multiply $\rho_0$ to \eqref{rho0}\footnote{One attempts to differentiate the RHS of \eqref{intro:rho0bd} and integrate to get \eqref{rhoest}. This could give a quick check of $n\geq 7$, but not $n=6$ because the integral on the outer region is divergent. Check Remark \ref{rmk:on-weight} for how we overcome this. },  and integrate  it to find the following estimate
\begin{equation}
    \label{rhoest}
\|\nabla\rho_0\|_{L^2}\lesssim\begin{cases}
R^{-4}|\log R|^{\frac12}\approx Q|\log Q|^{\frac12},\quad & n=6,\\
R^{2-n}R^{\frac{n-6}{2}}\approx Q^{\frac{p}{2}},\quad &n\geq 7.\end{cases}
\end{equation}

Here the dimension of the space plays an important role in the integration. Estimates (\ref{intro:rho0bd})-(\ref{rhoest}) show that the contributions to $dist(u,\mathcal{T})$ also come from the far away behavior of $u$. This is one of the main difficulties in obtaining the sharp quantitative estimates for Struwe's decomposition.

Now let $\rho=\rho_0+\rho_1$. Then \eqref{rho-fir} and \eqref{rho0} implies $\rho_1$ will satisfy
\begin{align}\label{eq:rho1}
\begin{cases}
 \Delta \rho_{1}+\left[\left(\sigma+\rho_{0}+\rho_{1}\right)^{p}-\left(\sigma+\rho_{0}\right)^{p}\right]+\sum\limits_{i=1}^\nu\sum\limits_{a=1}^{n+1} c_{a}^j U_{j}^{p-1} Z_{j}^a+f=0,\\
    \int \nabla\rho_1\cdot \nabla Z_j^a=0, \quad i=1,\cdots, \nu;\ a=1,\cdots, n+1.
\end{cases}
\end{align}
Observe the equation of $\rho_1$ does not contain the interaction term $\sigma^p-\sum_{i=1}^\nu U_i^p$. Therefore, $\rho_1$ should depend on a higher order of $Q$. Indeed, Proposition \ref{prop:rho1-gradient} proves that $\|\nabla\rho_1\|\lesssim Q^2+\|f\|_{H^{-1}}$.
Combining with previous estimate of $\nabla\rho_0$, we get
\[\|\nabla \rho\|_{L^2}\leq \|\nabla\rho_0\|_{L^2}+\|\nabla\rho_1\|_{L^2}\lesssim \|f\|_{H^{-1}}+\begin{cases}Q^{\frac{p}{2}},& n\geq 7,\\ Q|\log Q|^{\frac12},&n=6.\end{cases}\]
On the other hand, we shall multiply \eqref{rho-fir} by some appropriate $Z_{k}^{n+1}$ and integrate it to arrive (cf. Lemma \ref{lem:I1Uk})
\[Q\lesssim \|f\|_{H^{-1}}+\left|\int \sigma^{p-1}\rho U_k\right|+\int |\rho|^p U_k.\]
To establish the above estimates, unlike \cite{figalli2020sharp}, we did not use cut-off functions. Using the point-wise estimate \eqref{intro:rho0bd} of $\rho_0$, we can show that the last two terms are higher order in $Q$ and then  $Q\lesssim \|f\|_{H^{-1}}$. Consequently, $\|\nabla\rho\|_{L^2}\leq \|f\|_{H^{-1}}^{\frac{p}{2}}$ if $n\geq 7$ and $\|\nabla\rho\|_{L^2}\leq \|f\|_{H^{-1}}|\log \|f\|_{H^{-1}}|^{\frac12}$ if $n=6$. Thus one can establish Theorem \ref{thm:main} in this setting.

\bigskip

For a general family of bubbles $\sigma=\sum_i U[z_i,\l_i]$, things are much more complicate. Since we can not a priori assume that $ \lambda_i \sim \lambda_j$ or $  |z_i-z_j|<< \min (\lambda_i^{-1}, \lambda_j^{-1})$,  we may have bubble towers, bubble clusters, and these two may be  mixed. (We remark that there are many papers in the literature concerning the construction of  bubbling clusters or bubbling towers solutions. For bubbling towers we refer to \citet{DDM2003,MP2010,PJ2014} and the references therein. For bubbling clusters we refer to \citet{WY2007,wei2010infinitely} and the references therein.)  For instance, if $\sigma$ contains bubble towers (that is $\max\{\lambda_i/\lambda_j,\lambda_j/\lambda_i\}\gg \lambda_i\lambda_j|z_i-z_j|^2$), one neither knows the existence of $\rho_0$ satisfying \eqref{rho0}, nor has a simple control of the interaction as \eqref{intro:sUk}.
These two difficulties are deeply related. The strategy is to design a ``good" space for the interaction term $|\sigma^p-\sum_iU_i^p|$ so that \eqref{rho0} has a solution $\rho_0$ with the desired control. Choosing the right norm is a very delicate process. We start with just two bubbles and examine the magnitude of the interaction term $(U_i+U_j)^p-U_i^p-U_j^p$ on different regions of $\Rn$. Fortunately, we obtain a uniform norm $\|\cdot\|_{**}$ (cf. \eqref{h-t-all}) to handle the bubble tower and bubble cluster at the same time, which reduces the amount of work significantly. Then $\|\sigma^p-\sum_k U_k^p\|_{**}\leq C(n,\nu)$ follows from the bounds of all pairs by a simple inequality.

The existence of $\rho_0$ satisfying $\eqref{rho0}$ is based on some a prior estimates (cf. Lemma \ref{lem:prior-est}). To establish such estimates, we used blow-up argument and divide $\Rn$ into three types of region: inner, neck, and  exterior ones. We manage to prove the leading parts of $W(x)$ is a supersolution on the neck and exterior regions. This is the most crucial and technical part of the proof. After establishing the a prior estimate, we get the existence of $\rho_0$ from standard contraction mapping theorem. Consequently $\|\rho_0\|_*\leq C(n,\nu)$, which plays the corresponding role as \eqref{intro:rho0bd}. Then the rest of proof is almost the same.


We also construct an example which shows the exponents in  \eqref{f-ieq} are sharp. Suppose $\sigma=\tilde U_1+\tilde U_2$ where $\tilde U_1:= U[-Re_1,1]$ and $\tilde U_2=U[Re_1,1]$. By reduction theorem (see \cite{wei2010infinitely}), one can prove the existence of $\tilde \rho$ such that
\begin{align}
\begin{cases}
\Delta \tilde \rho+[(\sigma+\tilde \rho)^p-\sigma^p]+\sigma^p-\sum_{i=1,2}\tilde U_i^p+\sum_{i,a}\tilde c_a^i\tilde U_i^{p-1}\tilde Z_i^{a}=0,\\
    \int \tilde U_i^{p-1}\tilde Z_i^a \tilde \rho=0,  \quad i=1,\cdots, \nu;\ a=1,\cdots, n+1.
\end{cases}
\end{align}
Choose $u=\sigma+\tilde \rho$. Then $ \Delta u + |u|^{p-1} u=-\sum_{i,a}\tilde c_a^i\tilde U_i^{p-1}\tilde Z_i^{a}$. We manage to find a good approximation of $\tilde \rho$ in the interior and exterior region which shows that $\|\nabla \tilde \rho\|_{L^2}$ in the interior part is already no less than $Q^{\frac{p}{2}}$ when $n=7$ and $Q|\log Q|^{\frac12}$ when $n=6$.
\bigskip

The organization of the paper is as follows. In Section 2, we prove the main result assuming several crucial estimates on $\rho$ and $\nabla\rho$. Section 3 contains two subsections. The first one proves the existence of the first expansion of $\rho$ and its point-wise estimates. We use it to establish the estimates used in the proof of main result in the second subsection. In Section 4, we devote to constructing some example to show that Theorem \ref{thm:main} is sharp. The appendix consists of various integral estimates between bubbles and their derivatives.

\section{Proof of the main theorem}
In this section, we will prove the main Theorem \ref{thm:main} based on some crucial estimates, whose proofs are deferred to the next section.

Suppose $u=\sigma+\rho$ where $\sigma=\sum_{i=1}^{\nu}U_i$ is the best approximation (see \eqref{sigmadef}). Then
\begin{align}\label{eq:u-split}
\begin{split}
\Delta u+u|u|^{p-1}
=\Delta(\sigma+\rho)+(\sigma+\rho)|\sigma+\rho|^{p-1}
=\Delta \rho+p\sigma^{p-1}\rho+I_1+I_2,
\end{split}
\end{align}
where
\begin{align}\label{I1I2}
I_1=&\sigma^p-\sum_{i=1}^\nu U_i^p,\quad I_2=(\sigma+\rho)|\sigma+\rho|^{p-1}-\sigma^p-p\sigma^{p-1}\rho.
\end{align}
Let $f=-\Delta u-u|u|^{p-1}$. Then \eqref{eq:u-split} can be reorganized to
\begin{align}\label{rho-1}
\Delta \rho+p\sigma^{p-1}\rho+I_1+I_2+f=0.
\end{align}
If $n\geq 6$, then $p\in (1,2]$. We have the following elementary inequality
\[\left|(\sigma+\rho)| \sigma+\left.\rho\right|^{p-1}-\sigma^{p}-\left.p \sigma^{p-1} \rho\right|\lesssim| \rho\right|^{p}.\]
Thus
\[|I_2|\leq |\rho|^p.\]
Multiplying $Z^{n+1}_k$ (defined in \eqref{eq:Z}) to \eqref{rho-1} and integrating over $\mathbb{R}^n$ give
\begin{align*}
&\left|\int I_1 Z^{n+1}_k\right|\leq \int|fZ^{n+1}_k|+\left|\int p\sigma^{p-1} \rho Z^{n+1}_k\right|+\int |\rho|^p|Z^{n+1}_k|.
\end{align*}
Here we have used the orthogonal conditions \eqref{eq:rho-ortho}. Using $|Z_k^{n+1}|\lesssim U_k$ and $||U_k||_{H^{-1}}\leq C(n)$, we can apply H\"older inequality
\begin{align}\label{eq:xi-main-7}
\begin{split}
    &\left|\int I_1 Z^{n+1}_k\right|\lesssim \|f\|_{H^{-1}}+\left|\int \sigma^{p-1} \rho Z_k^{n+1}\right|+\int |\rho|^p |Z_k^{n+1}|.
\end{split}
\end{align}
\begin{lemma}\label{lem:I1Uk}
Suppose $u$ satisfies \eqref{u-cond} with $\delta$ small enough. Then
\begin{align}\label{I1LUK}
    \int I_1 Z_{k}^{n+1}=\int I_1\l_k\partial_{\l_k}U_k=\sum_{i=1,i\neq k}^\nu \int U_i^p\l_k\partial_{\l_k}U_k+o(Q).
\end{align}
\end{lemma}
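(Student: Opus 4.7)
The plan is to reduce the identity to an estimate on a remainder via integration by parts, then control this remainder by Taylor expansion on a dominant-bubble decomposition of $\mathbb{R}^n$, using the pairwise interaction bounds of the appendix.

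First, $Z_k^{n+1}$ is proportional to $\lambda_k\partial_{\lambda_k}U_k$, and scale-invariance of $\int U_k^{p+1}$ yields $\int U_k^p Z_k^{n+1}=0$. Moreover $Z_k^{n+1}$ satisfies the linearized equation $\Delta Z_k^{n+1}+pU_k^{p-1}Z_k^{n+1}=0$; combined with $\Delta U_i=-U_i^p$ and integration by parts (the $|x|^{2-n}$ decay kills boundary terms) this produces the identity
\begin{equation*}
\int U_i^p Z_k^{n+1}\,dx = p\int U_k^{p-1}U_i Z_k^{n+1}\,dx,\qquad i=1,\dots,\nu.
\end{equation*}
Setting $V:=\sum_{i\neq k}U_i$ and writing $\sigma^p = U_k^p + pU_k^{p-1}V + R_1$ with $R_1:=(U_k+V)^p-U_k^p-pU_k^{p-1}V$, the two identities collapse to
\begin{equation*}
\int I_1 Z_k^{n+1} = \int R_1 Z_k^{n+1},
\end{equation*}
so the lemma is equivalent to proving $\int R_1 Z_k^{n+1}=\sum_{i\neq k}\int U_i^p Z_k^{n+1}+o(Q)$.

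For this last identity I would decompose $\mathbb{R}^n$ into the dominant-bubble regions $\Omega_j:=\{U_j\geq U_i\ \forall i\}$. On $\Omega_k$, where $V\ll U_k$, Taylor's theorem (valid for $p\in(1,2]$) gives $|R_1|\lesssim \min(U_k^{p-2}V^2,V^p)$; together with $|Z_k^{n+1}|\lesssim U_k$ and the pairwise interaction estimates in the appendix, both $\int_{\Omega_k}|R_1 Z_k^{n+1}|$ and $\int_{\Omega_k}\sum_{i\neq k}U_i^p|Z_k^{n+1}|$ are $o(Q)$. On $\Omega_j$ with $j\neq k$, $U_j$ dominates all $U_i$ (including $U_k$), and Taylor expanding $(U_k+V)^p$ around $V$ yields
\begin{equation*}
R_1 = V^p + pV^{p-1}U_k - U_k^p - pU_k^{p-1}V + O(V^{p-2}U_k^2).
\end{equation*}
Inside $\Omega_j$, $V^p-\sum_{i\neq k}U_i^p$ consists only of cross-interactions among the bubbles other than $U_k$ (vanishing when $\nu=2$ and $o(Q)$ otherwise by triple-interaction bounds), while every remaining summand couples $U_k$ (which is small on $\Omega_j$) with $V$ or itself; paired with $|Z_k^{n+1}|\lesssim U_k$, each integrates to $o(Q)$ by the appendix bounds. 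The key point is that the dangerous-looking $\int_{\Omega_j}U_k^{p-1}U_j\cdot U_k$ reduces to $\int_{\Omega_j}U_k^p U_j$, whose mass sits outside $\Omega_j$ and therefore contributes only $O(Q^p)=o(Q)$.

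Summing the contributions from $\Omega_k$ and all $\Omega_j$ with $j\neq k$ produces the desired identity. The main obstacle is securing these $o(Q)$ bounds \emph{uniformly} across all admissible weakly interacting configurations: the family may involve bubble towers ($\lambda_i/\lambda_j\to\infty$), clusters, or mixtures, and when $n\geq 7$ the non-integer exponent $p=(n+2)/(n-2)\in(1,2)$ forces careful handling of the Taylor remainder. This is exactly the role of the uniform pairwise interaction bounds developed in Section~3 and the appendix.
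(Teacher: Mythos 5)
Your proposal is correct and follows essentially the same strategy as the paper: both reduce the claim to showing that the Taylor remainder $\sigma^{p}-U_k^{p}-pU_k^{p-1}\sigma_k$ (and the cross-term $\sigma_k^{p}-\sum_{i\neq k}U_i^{p}$) pair to $o(Q)$ against $Z_k^{n+1}$ via region-by-region expansion, the $\epsilon$-interpolation trick, and the triple-interaction bound, with the identity $p\int U_k^{p-1}U_iZ_k^{n+1}=\int U_i^{p}Z_k^{n+1}$ converting the leading term. The only differences are cosmetic: you invoke this integration-by-parts identity at the outset (yielding the clean reformulation $\int I_1Z_k^{n+1}=\int R_1Z_k^{n+1}$) rather than at the end, and you partition $\mathbb{R}^n$ into $\nu$ dominant-bubble regions $\Omega_j$ instead of the paper's binary split $\{\nu U_k\geq\sigma_k\}$ versus $\{\sigma_k>\nu U_k\}$, both of which support the same validity range for the Taylor estimates.
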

\begin{proof}
The first identity follows from the definition $Z_{k}^{n+1}=\l_k\partial_{\l_k}U_k$. Denote $\sigma=\sigma_k+ U_k$ where $\sigma_k=\sum_{i=1,i\neq k}^\nu U_i$. We make the following decomposition
\begin{align*}
    \int I_1\l_k\partial_{\lambda_k} U_k=&\int (\sigma^p-\sum_{i=1}^\nu  U_i^p)\l_k\partial_{\l_k}U_k=J_1+J_2+J_3+J_4,
\end{align*}
where
\begin{align*}
    J_1=&\int_{\{\nu U_k\geq\sigma_k\}}(p U_k^{p-1}\sigma_k-\sum_{i=1,i\neq k}^\nu U_i^p)\l_k\partial_{\l_k}U_k,\\
    J_2=&\int_{\{\nu U_k\geq \sigma_k\}}(\sigma^p-U_k^p-pU_k^{p-1}\sigma_k)\l_k\partial_{\l_k}U_k,\\
     J_3=&\int_{\{\sigma_k>\nu U_k\}}(p\sigma_k^{p-1}U_k+\sigma_k^p-\sum_{i=1}^\nu  U_i^p)\l_k\partial_{\l_k}U_k,\\
    J_4=&\int_{\{\sigma_k>\nu U_k\}}(\sigma^p-\sigma_k^p-p\sigma_k^{p-1}U_k)\l_k\partial_{\l_k}U_k.
\end{align*}
Notice $|\l_k\partial_{\l_k}U_k|\lesssim U_k$.
Based on the inequality
\begin{align}\label{el-1}
    |(a+b)^p-a^p-pa^{p-1}b|\lesssim a^{p-2}b^2 \quad \text{if } a\geq b>0,
\end{align}
we have
\begin{align}\label{e-trick}
    |J_2|\lesssim \int_{\{\nu U_k>\sigma_k\}} U_k^{p-1}\sigma_k^2\lesssim\int U_k^{p-\epsilon}\sigma_k^{1+\epsilon}\approx Q^{1+\epsilon}.
\end{align}
Here $\epsilon>0$ is very small, such that $1+\epsilon<p-\epsilon$, and in the last step we have used Lemma \ref{lem:2int-est}. Similarly $|J_4|\lesssim Q^{1+\epsilon}$. For $J_3$,
\begin{align*}
    |J_3|=&\left|\int_{\{\sigma_k\geq \nu U_k\}}(p\sigma_k^{p-1}U_k+\sigma_k^p-\sum_{i=1}^\nu  U_i^p)\l_k\partial_{\l_k}U_k\right|\\
    \lesssim & \int_{\{\sigma_k\geq \nu U_k\}}p\sigma_k^{p-\epsilon}U_k^{1+\epsilon}+\int \left|\sigma_k^p-\sum_{i=1,i\neq k}^\nu U_i^p\right|U_k+\int_{\{\sigma_k\geq \nu U_k\}} U_k^{p+1}.
\end{align*}
Using an elementary inequality
\[\left|\sigma_k^p-\sum_{i=1,i\neq k}^\nu U_i^p\right|\lesssim \sum_{\substack{1\leq i<j\leq \nu\\i\neq k,j\neq k}}U_i^{p-1}U_j\]
and the triple integral estimate in Lemma \ref{lem:3int-est}, we have
\[|J_3|\lesssim Q^{1+\epsilon}+\int_{\{\sigma_k\geq \nu U_k\}}U_k^{p+1}.\]
Consider the second term on the RHS. Lemma \ref{lem:B-inf} implies
\[\int_{\{U_i\geq  U_k\}}U_k^{p+1}\leq \int  U_k^{p-\epsilon}\inf (U_i^{1+\epsilon},U_k^{1+\epsilon})=O(q_{ik}^{\frac{n}{n-2}}|\log q_{ik}|)=o(Q),\]
therefore
\[\int_{\{\sigma_k\geq \nu U_k\}}U_k^{p+1}\leq\sum_{i=1,i\neq k}^\nu \int_{\{U_i>U_k\}}U_k^{p+1}=o(Q).\]
Therefore $|J_3|=o(Q)$.
Now consider $J_1$,
\begin{align*}
    &J_1-\sum_{i=1,i\neq k}^{\nu}p\int U_k^{p-1}U_i\l_k\partial_{\l_k}U_k\\
    &=\int_{\{\nu U_k<\sigma_k\}}p U_k^{p-1}\sigma_k\l_k\partial_k U_k-\int_{\{\nu U_k>\sigma_k\}}\sum_{i=1,i\neq k}^\nu U_i^p\l_k\partial_{\l_k}U_k=o(Q).
\end{align*}
Here we applied the same trick in \eqref{e-trick} to obtain $o(Q)$.
With the above estimates of $J_i$, $i=1,2,3,4$, we can get
\begin{align}
    \int I_1\l_k\partial_{\l_k}U_k=\sum_{i=1,i\neq k}^\nu p\int U_k^{p-1}U_i\l_k\partial_{\l_k}U_k+o(Q).
\end{align}
Simple integration by parts shows that
\[p\int U_k^{p-1}U_i\l_k\partial_{\l_k}U_k=\int U_i^p\l_k\partial_{\l_k}U_k.\]
Thus \eqref{I1LUK} holds.
\end{proof}

Now let us go back to \eqref{eq:xi-main-7}. In the next section, we will prove two important estimates
\begin{align}\label{pre-est}
    \left|\int \sigma^{p-1} \rho Z_k^{n+1}\right|=o(Q)+\|f\|_{H^{-1}},\quad \int |\rho|^{p} |Z_k^{n+1}|=o(Q)+\|f\|_{H^{-1}},
\end{align}
in Lemma \ref{lem:srU-est}.
\begin{remark}
These two terms have rough bounds easily by H\"older inequality and Sobolev inequality. Indeed, for instance, when $n\geq 7$, as did in \cite[(3.31) and footnote 1]{figalli2020sharp},
\[\left|\int \sigma^{p-1} \rho Z_k^{n+1}\right|\lesssim \|\nabla\rho\|_{L^2}Q^{p-1}\]
\[\int |\rho|^{p} |Z_k^{n+1}|\lesssim \|\nabla\rho\|_{L^2}^p.\]
By Lemma \ref{lem:I1Uk} and the above two estimates, one can achieve
\begin{align}\label{q-f-tmp}
Q\lesssim \|\nabla \rho\|_{L^2}Q^{p-1}+\|\nabla \rho\|_{L^2}^p+\|f\|_{H^{-1}}.
\end{align}
Multiplying \eqref{rho-1} by $\rho$, the approach in \cite{figalli2020sharp} would induce
$\|\nabla\rho\|_{L^2}\lesssim Q^{p-1}+\|f\|_{H^{-1}}$. Plugging in this fact to \eqref{q-f-tmp}, we obtain $Q\lesssim Q^{2(p-1)}+\|f\|_{H^{-1}}$, one fails to conclude anything when $2(p-1)\leq 1$ (equivalent to $n\geq 10$).
This obstacle actually motivates us to have a better control of $\rho$ instead of simply using H\"older inequality and Sobolev inequality.
\end{remark}

Using \eqref{pre-est}, we can prove the following important lemma.
\begin{lemma} Suppose $u$ satisfies \eqref{u-cond} with $\delta$ small enough, then
\begin{align}
    Q\lesssim \|f\|_{H^{-1}}.
\end{align}
\end{lemma}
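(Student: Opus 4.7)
Combining \eqref{eq:xi-main-7} with the two bounds in \eqref{pre-est} and Lemma \ref{lem:I1Uk}, I obtain for every $k\in\{1,\dots,\nu\}$ the inequality
\[
\Bigl|\sum_{i\ne k}\int U_i^{p}\,\lambda_k\partial_{\lambda_k}U_k\Bigr|\;\lesssim\; \|f\|_{H^{-1}}+o(Q).
\]
The lemma then follows once one exhibits an index $k$ whose left-hand side above is bounded below by a multiple of $Q$: absorbing the error $o(Q)$ into the left-hand side for $\delta$ (and hence $Q$) small enough yields $Q\lesssim \|f\|_{H^{-1}}$.

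To produce such a lower bound I would expand each integral via $\int U_i^{p}\lambda_k\partial_{\lambda_k}U_k=\lambda_k\partial_{\lambda_k}\!\int U_i^{p}U_k$ together with the asymptotics $\int U_i^{p}U_k=c_n q_{ik}(1+o(1))$ provided in the appendix. A direct differentiation of \eqref{inter-bubble} then gives
\[
\int U_i^{p}\lambda_k\partial_{\lambda_k}U_k=-\tfrac{(n-2)c_n}{2}\,q_{ik}\,\frac{\lambda_k/\lambda_i-\lambda_i/\lambda_k+\lambda_i\lambda_k|z_i-z_k|^2}{\lambda_k/\lambda_i+\lambda_i/\lambda_k+\lambda_i\lambda_k|z_i-z_k|^2}+o(q_{ik}),
\]
and in the $\delta$-interacting regime the denominator tends to $+\infty$ while the numerator is dominated by one of its three terms; the quotient therefore tends to $\pm 1$, so each summand has magnitude $\asymp q_{ik}$ with a sign determined by the relative sizes of the scales $\lambda_i,\lambda_k$ and the spacing $\lambda_i\lambda_k|z_i-z_k|^2$.

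To neutralise possible cancellations in the resulting sum, I would relabel so that $Q=q_{12}$ with $\lambda_1\ge\lambda_2$ and test with $k=1$: the term $i=2$ then contributes $\asymp Q$ with a definite sign, while every other summand is controlled by $q_{i1}\le Q$. If all remaining terms share that sign no cancellation is possible; otherwise, a pigeon-hole argument applied to the two test functions $Z_1^{n+1}$ and $Z_2^{n+1}$ still forces one of them to produce a net contribution of order $Q$, since switching $k$ from $1$ to $2$ flips precisely the sign of the $i\in\{1,2\}$ entry of the quotient above. The main obstacle is precisely this sign-cancellation analysis, which has to be carried out uniformly across all geometric configurations (bubble towers, bubble clusters, and their mixtures) of a $\delta$-interacting family; the individual computations rely only on the explicit formula \eqref{inter-bubble}, but one must certify that, regardless of how the concentration scales and centres are arranged, at least one choice of $k\in\{1,2\}$ produces the clean lower bound of order $Q$ needed to close the argument.
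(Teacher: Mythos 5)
Your computation via differentiation of \eqref{inter-bubble} is consistent with Lemma \ref{lem:U12-est}, and your identification of the obstruction is accurate: each contribution $\int U_i^{p}\,\lambda_k\partial_{\lambda_k}U_k$ has magnitude $\asymp q_{ik}$ but its sign depends on the relative scales, being $\approx -q_{ik}$ when $\lambda_i\le\lambda_k$ but possibly $\approx +q_{ik}$ when $\lambda_i>\lambda_k$ in the tower regime. The gap is in your resolution of this sign problem. Restricting the test index to $k\in\{1,2\}$, where $(1,2)$ realises $Q$, does not in general defeat cancellation: if a third (or fourth) bubble is taller than both $U_1$ and $U_2$ and interacts with $U_1$ (respectively $U_2$) at strength comparable to $Q$, then for each of $k=1$ and $k=2$ that tall bubble can contribute with the \emph{opposite} sign and a coefficient of the same magnitude, leaving the total $o(Q)$. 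Moreover, your pigeon-hole rests on the claim that swapping $k$ between $1$ and $2$ ``flips the sign of the $i\in\{1,2\}$ entry,'' but this is false for a bubble cluster: there the numerator $\lambda_k/\lambda_i-\lambda_i/\lambda_k+\lambda_i\lambda_k|z_i-z_k|^2$ is dominated by the last term and is positive regardless of which of $\{1,2\}$ plays the role of $k$; the sign only flips in the tower case. The paper sidesteps all of this with a different choice: collect all pairs $(i,j)$ with $q_{ij}\approx Q$ and let $k$ be the index of \emph{largest} $\lambda_k$ appearing among those pairs (so $k$ is not constrained to $\{1,2\}$). Then every active $i$ (i.e.\ $q_{ik}\approx Q$) automatically satisfies $\lambda_i\le\lambda_k$, Lemma \ref{lem:U12-est} gives $\int U_i^{p}\,\lambda_k\partial_{\lambda_k}U_k\approx -q_{ik}$ for all of them, and the active terms have a common sign; the inactive terms with $q_{ik}\ll Q$ are $o(Q)$, so the sum is $\approx -Q$ with no cancellation. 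This maximality choice is the one additional idea you need to close the argument.
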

\begin{proof}Lemma \ref{lem:2int-est} implies $\int U_i^pU_j\approx q_{ij}$ for $i\neq j$. Notice that if $q_{ij}\ll Q$, then
\[\left|\int U_i^p\l_j\partial_{\l_j}U_j\right|\lesssim \int U_i^pU_j\lesssim q_{ij}\ll Q.\]
We shall collect all the $q_{ij}$ such that $q_{ij}\approx Q$ and choose $k$ such that $\lambda_k$ is the largest one among them. Notice Lemma \ref{lem:U12-est} implies $\int U_i^p \l_k\partial_{\l_k}U_k\approx -q_{ik}\approx -Q$. Since they have the same sign, Lemma \ref{lem:I1Uk} yields
\begin{align}\label{I1Q}
    \left|\int I_1\l_k\partial_{\l_k}U_k\right|\approx Q.
\end{align}
Plugging in this fact  and \eqref{pre-est} to \eqref{eq:xi-main-7} to obtain
\[Q\lesssim \|f\|_{H^{-1}}.\]

\end{proof}

Now we can prove our main result Theorem \ref{thm:main}.

\begin{proof}[Proof of Theorem \ref{thm:main}]

In the next section, we shall prove $\rho=\rho_0+\rho_1$. By Proposition \ref{prop:L2-rho0}, we have
\begin{align}
\|\nabla\rho_0\|_{L^2}\lesssim \begin{cases}Q|\log Q|^{\frac12}&\text{if }n=6,\\ Q^{\frac{p}{2}}&\text{if }n\geq 7.\end{cases}
\end{align}
By Proposition \ref{prop:rho1-gradient}, we have
\[\|\nabla\rho_1\|\lesssim Q^2+\|f\|_{H^{-1}}.\]
Since we have shown $Q\lesssim \|f\|_{H^{-1}}$ in the previous lemma, then
\[\|\nabla \rho\|_{L^2}\leq \|\nabla\rho_0\|_{L^2}+\|\nabla\rho_1\|_{L^2}\lesssim \begin{cases} \|f\|_{H^{-1}}\left|\log \|f\|_{H^{-1}}\right|^{\frac12},& n=6,\\\|f\|_{H^{-1}}^{\frac{p}{2}},&n\geq 7.\end{cases}\]
Here we have used the fact that $x|\log x|^{\frac12}$ is increasing near 0. Therefore \eqref{f-ieq} holds.
\end{proof}

\begin{proof}[Proof of Corollary \ref{cor:struwe}]
The proof is identical to that of Corollary 3.4  in \cite{figalli2020sharp}.

\end{proof}

It remains to establish Lemma \ref{lem:srU-est}, Proposition \ref{prop:L2-rho0} and Proposition \ref{prop:rho1-gradient}. As we have discussed in the introduction, these depend crucially on a point-wise estimate of $\rho_0$.




\section{Expansion of the error}
In this section we will prove the estimates required in Section 2, the most important two parts are a point-wise estimate of $\rho_0$ and a global $L^2$ estimate of $\nabla\rho_0$. We divide the proof of them into two subsections.

\subsection{Existence of the first approximation.}
Let us consider the equation \eqref{rho-1} of $\rho$. The linearized operator is $\Delta+p\sigma^{p-1}$, $I_1+I_2+f$ is the non-homogeneous term and $\rho$ is the solution. $I_1$ is the main data which encodes the interaction of bubbles. $I_2$ is a higher order term in $\rho$ and negligible. Since the linearized operator have kernels, $f$ should be interpreted as some Lagrange multiplier. Therefore, an approximation of $\rho$ can be obtained from studying the linear equation $\Delta\rho_0+p\sigma^{p-1}\rho_0=I_1+I_2$.

For $i=1,\cdots,\nu$, define
\begin{align}\label{eq:Z}
    Z_{i}^a=\frac{1}{\lambda_i}\left.\frac{\partial U[z,\l_i]}{\partial z^a}\right|_{z=z_i},\quad Z_{i}^{n+1}=\l_i\left.\frac{\partial U[z_i,\l]}{\partial\l}\right|_{\lambda=\lambda_i},
\end{align}
here $z^a$ is $a$-th component of $z$ for $a=1,\cdots,n$. Notice $\{Z_i^a:i=1,\cdots,\nu\}$ is the kernel of  $\Delta+pU_i^{p-1} $. It is easy to verify $|Z_i^a|\lesssim U_i$ for any $i$ and $1\leq a\leq n+1$.

Consider the following linear equation
\begin{align}\label{w2.3}
\begin{cases}
    \Delta \phi+p\sigma^{p-1}\phi=h+\sum\limits_{i=1}^\nu\sum\limits_{a=1}^{n+1}c_{a}^i U_i^{p-1}Z_{i}^a,\\
    \int U_i^{p-1}\phi Z_{i}^{a}=0,\quad i=1,\cdots, \nu;\ a=1,\cdots, n+1,
\end{cases}
\end{align}
where $\sigma=\sum_{i=1}^\nu U[z_i,\l_i]$ is a family of $\delta$-interacting bubbles. We always assume $\delta$ is very small. We shall use finite-dimensional reduction to prove the solvability of $\phi$ given a reasonable $h$ in Lemma \ref{lem:prior-est}. To that end, we need to set up the norms and spaces.

Let
$I=\{1,2,\cdots,\nu\}$, throughout this paper we  denote $y_i=\lambda_i(x-z_i)$, $i\in I$, and
\begin{align}\label{define:Rij}
   R_{ij}=\max\left\{\sqrt{\lambda_i/\lambda_j}, \sqrt{\lambda_j/\lambda_i}, \sqrt{\lambda_i\lambda_j}|z_i-z_j|\right\}=\varepsilon_{ij}^{-1}\quad \text{if}\quad i\neq j\in I.
\end{align}
\begin{definition}
For any two bubbles $U_i, U_j$, if $R_{ij}=\sqrt{\l_i\l_j}|z_i-z_j|$, then we call them a \textit{bubble cluster}, otherwise call them a  \textit{bubble tower}.
\end{definition}
Let us define
\begin{align}
    R:=\frac{1}{2}\min_{i\neq j\in I} R_{ij}.
\end{align}
Now we can define $\|\cdot\|_{**}$ and $\|\cdot\|_{*}$ norms as
\begin{align}\label{def:**}
    \|h\|_{**}=\sup_{x\in\Rn} |h(x)|V^{-1}(x),\quad \|\phi\|_{*}=\sup_{x\in\Rn} |\phi(x)|W^{-1}(x)
\end{align}
with
\begin{align}\label{define-V}
    \begin{split}
        V(x)= &\
        \sum_{i=1}^\nu\left(\frac{\lambda_i^{\frac{n+2}{2}}R^{2-n}}{\langle y_i\rangle^{4}}\chi_{\{|y_i|\leq R\}}
        +\frac{\lambda_i^{\frac{n+2}{2}}R^{-4}}{\langle y_i\rangle^{n-2}}\chi_{\{|y_i|\geq R/2\}}\right),
    \end{split}
\end{align}
\begin{align}\label{define-W}
    \begin{split}
        W(x)= &\
        \sum_{i=1}^\nu\left(\frac{\lambda_i^{\frac{n-2}{2}}R^{2-n}}{\langle y_i\rangle^{2}}\chi_{\{|y_i|\leq R\}}
        +\frac{\lambda_i^{\frac{n-2}{2}}R^{-4}}{\langle y_i\rangle^{n-4}}\chi_{\{|y_i|\geq R/2\}}\right).
    \end{split}
\end{align}
\begin{remark}
The ad hoc weight $V$ is used to capture  the behavior of the error term $h=\sigma^p-\sum_{i=1}^\nu U_i^p$. Since $\Delta_x \phi(x) \sim h(x)$, it is natural to  define $W\sim \lambda^{-2}\langle y\rangle^{2}V$.
\end{remark}

\begin{proposition}\label{prop:on-weight}
There exists a small constant $\delta_0=\delta_0(n,\nu)$ and large constant $C(n,\nu)$ such that if $\delta<\delta_0$, then
\begin{align}
    \|\sigma^p-\sum_{i=1}^\nu U_i^p \|_{**}\leq C(n,\nu).
\end{align}
\end{proposition}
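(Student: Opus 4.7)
The plan is to reduce the $\nu$-bubble estimate to a pairwise bound between just two bubbles. The key reduction is the elementary inequality, valid for $p\in(1,2]$ and nonnegative reals $a_1,\ldots,a_\nu$,
\[
\Bigl(\sum_{i=1}^\nu a_i\Bigr)^{p}-\sum_{i=1}^\nu a_i^p\;\leq\; C(\nu)\sum_{1\leq i<j\leq \nu}\bigl[(a_i+a_j)^p-a_i^p-a_j^p\bigr],
\]
which I would establish by induction on $\nu$ using the scalar bound $(a+b)^p-a^p-b^p\leq C(p)(a^{p-1}b+ab^{p-1})$ together with the sub-additivity of $t\mapsto t^{p-1}$ on $[0,\infty)$. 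After this reduction, it suffices to show that for every pair $i\neq j$ and every $x\in\Rn$,
\[
\bigl|(U_i+U_j)^p-U_i^p-U_j^p\bigr|(x)\;\leq\; C(n)\,V(x),
\]
since summing over the $\binom{\nu}{2}$ pairs yields the full claim.

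For the pairwise bound, at a given $x$ one of the two bubbles dominates pointwise; say $U_i(x)\geq U_j(x)$. Then $\bigl|(U_i+U_j)^p-U_i^p-U_j^p\bigr|\lesssim U_i^{p-1}U_j$, and using the explicit bubble formulas this equals a constant times $\l_i^{2}\l_j^{(n-2)/2}\langle y_i\rangle^{-4}\langle y_j\rangle^{2-n}$. I split $\Rn$ into the core $\{|y_i|\leq R\}$ and the exterior $\{|y_i|\geq R/2\}$ of bubble $i$. In the core, matching the first piece of $V(x)$ reduces to the pointwise inequality $U_j(x)\lesssim \l_i^{(n-2)/2}R^{2-n}$; in the exterior, I use $\langle y_i\rangle^{-4}\leq (R/2)^{-2}\langle y_i\rangle^{-2}$ to trade one power of $\langle y_i\rangle$ for a power of $R$, and the comparison with the second piece of $V$ becomes analogous.

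The main obstacle is making the pointwise check uniform across three geometrically distinct regimes: bubble clusters (comparable scales, distant centers), bubble towers with $\l_i\gg\l_j$, and bubble towers with $\l_j\gg\l_i$. The worst case is the last one: the sharper bubble $U_j$ sits deep inside the core of $U_i$, and at its peak $x=z_j$ one has $U_i^{p-1}U_j\approx \l_i^{2}\l_j^{(n-2)/2}$. The required bound $\l_i^2\l_j^{(n-2)/2}\lesssim \l_j^{(n+2)/2}R^{2-n}$ is equivalent to $R^{n-2}\lesssim(\l_j/\l_i)^{(n-2)/2}=R_{ij}^{n-2}$, which is automatic from $R\leq R_{ij}/2$. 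Once this pointwise check is verified at $x=z_j$, the same comparison propagates to the whole core of $U_i$ by the monotonicity of $\langle y_j\rangle^{2-n}$ together with the triangle inequality $|x-z_j|\leq|x-z_i|+|z_i-z_j|$, and the cluster and reverse-tower sub-cases are softer variants of the same calculation. Summing the pairwise bounds over the $\binom{\nu}{2}$ pairs yields $\|\sigma^p-\sum_i U_i^p\|_{**}\leq C(n,\nu)$.
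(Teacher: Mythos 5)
Your reduction to pairwise interactions is the same one the paper uses (Lemma~\ref{lem:aip}), and the overall plan of proving a pointwise bound for each pair is correct. However, the pointwise check breaks down in the intermediate (``neck'') region of a bubble tower, which is precisely where the paper needs a separate estimate (see~\eqref{h-t-2}). Take a tower pair with $\lambda_j\gg\lambda_i$ and $R_{ij}=\sqrt{\lambda_j/\lambda_i}$, and look at a point where $|y_j|\approx R_{ij}$, i.e.\ just outside the core of the sharp bubble $U_j$ but still deep inside the core of the wide bubble $U_i$ (there $|y_i|\lesssim 1$). At such a point $U_i(x)\approx U_j(x)\approx\lambda_i^{(n-2)/2}$, so the pairwise interaction is roughly $\lambda_i^{(n+2)/2}$. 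Your scheme compares it with the dominant bubble's core piece of $V$, namely $\lambda_i^{(n+2)/2}R^{2-n}\langle y_i\rangle^{-4}\approx\lambda_i^{(n+2)/2}R^{2-n}$, which is too small by a factor $R^{n-2}$. The correct comparison is with the \emph{other} bubble's exterior piece $\lambda_j^{(n+2)/2}R^{-4}\langle y_j\rangle^{2-n}$, which at $|y_j|=R_{ij}$ equals $\lambda_i^{(n+2)/2}R_{ij}^4R^{-4}\gtrsim\lambda_i^{(n+2)/2}$ precisely because $R\leq R_{ij}/2$. Your proposal never considers this cross-bubble comparison, and the ``monotonicity of $\langle y_j\rangle^{2-n}$'' you invoke to propagate the check outward from $z_j$ does not help: both the interaction and your target $\lambda_i^{(n+2)/2}R^{2-n}\langle y_i\rangle^{-4}$ vary with $x$, and the ratio between them swells by $R^{n-2}$ as $x$ moves from the peak of $U_j$ to $|y_j|\approx R_{ij}$.

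There is also an exponent slip in your worst-case check at $x=z_j$. Your stated inequality $\lambda_i^2\lambda_j^{(n-2)/2}\lesssim\lambda_j^{(n+2)/2}R^{2-n}$ is equivalent to $R^{n-2}\lesssim(\lambda_j/\lambda_i)^2=R_{ij}^4$, not $R_{ij}^{n-2}$; for $n\geq 7$ this fails once $R$ is a fixed fraction of $R_{ij}$. The inequality you actually need is $\lambda_j^2\lambda_i^{(n-2)/2}\lesssim\lambda_j^{(n+2)/2}R^{2-n}$, coming from the elementary bound $h\lesssim U_j^{p-1}U_i$ with $U_j$ the dominant bubble at $z_j$, and that does reduce to $R^{n-2}\lesssim R_{ij}^{n-2}$. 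To repair the argument you need, for each pair, a four-region decomposition---core of the sharp bubble, the two ``necks'' controlled by the sharp bubble's exterior weight, and a far exterior---as the paper carries out in~\eqref{h-t-1}--\eqref{h-t-4} and~\eqref{h-c-1}--\eqref{h-c-4}, rather than a two-region split keyed to whichever bubble is locally dominant.
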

\begin{proof} To make the proof more transparent, we will start with two bubbles.
 Consider $U_1=U[z_1,\lambda_1]$, $U_2=U[z_2,\lambda_2]$, and $h=(U_1+U_2)^p-U_1^p-U_2^p$.
 Because of weak interaction, $U_1$ and $U_2$ must be a bubble tower or bubble cluster. Notice that $h$ is always positive.

\begin{figure}[ht]
\centering
\includegraphics[width=0.45\textwidth]{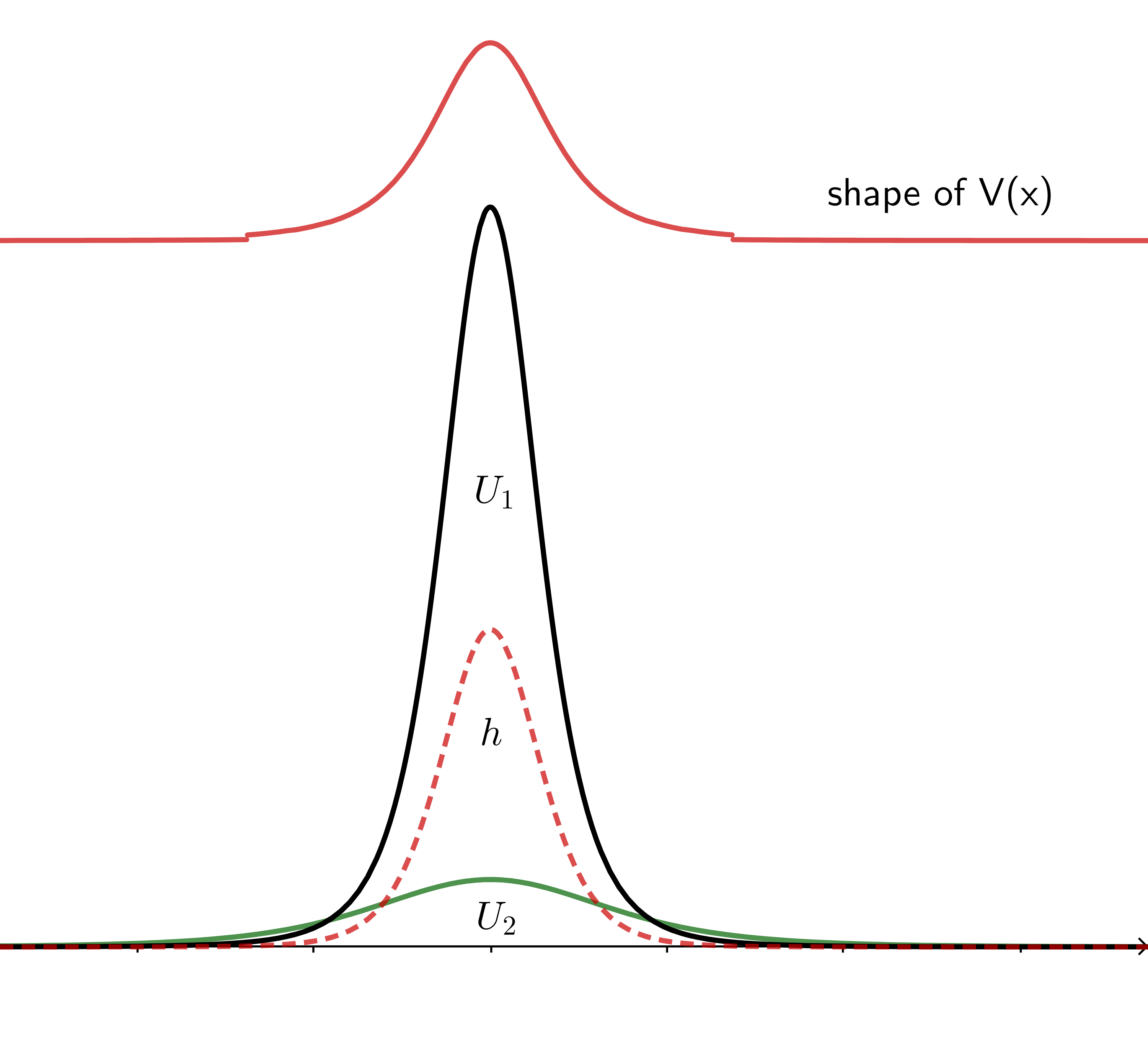}
\includegraphics[width=0.4\textwidth]{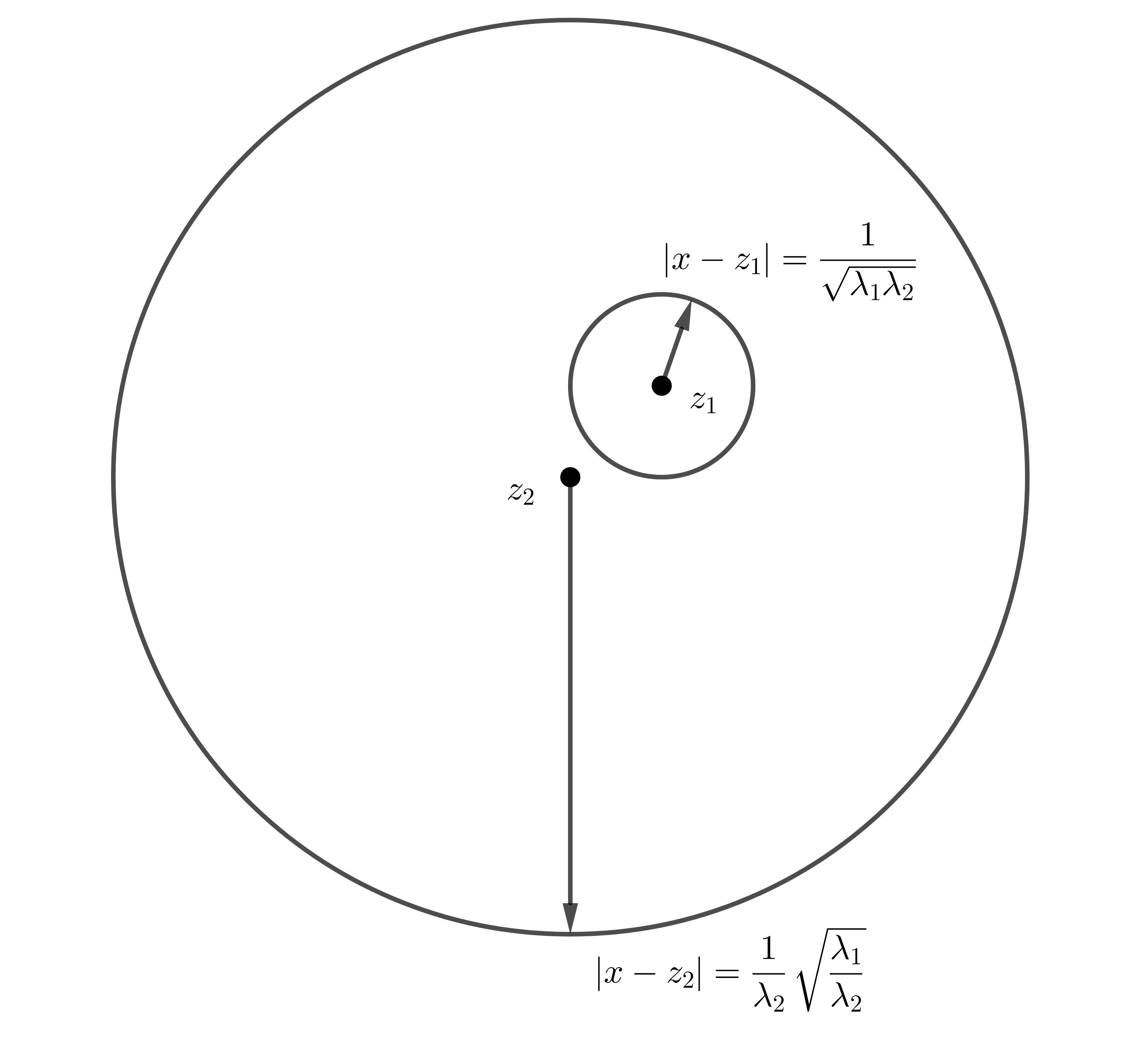}
\caption{$U_1$ and $U_2$ form a bubble tower with $\lambda_1\gg \lambda_2$. The dotted line denotes the $h$. The right picture shows that the core region of $U_2$ (i.e. $\{|y_2|\leq R_{12}\}$) contains that of $U_1$ (i.e. $\{|y_1|\leq R_{12}\}$).}
\label{fig:bubble-tower}
\end{figure}
\item[$\bullet$] Bubble tower (see Figure \ref{fig:bubble-tower}):  Without loss of generality (WLOG), we can assume $\lambda_1> \lambda_2$ and $R_{12}=\sqrt{\lambda_1/\lambda_2}=\varepsilon_{12}^{-1}\gg1$. In the rescaled  $z_1$-centered coordinate $y_1=\lambda_1(x-z_1)$, we see that $U_1(x)=\lambda_1^{(n-2)/2}U(y_1)$, where $U(y)=U[0,1](y)$, and
\begin{align} \label{t:U2atU1}
U_2(x)=\frac{\lambda_1^{(n-2)/2}R_{12}^{2-n}}{(1+\varepsilon_{12}^4|y_1-\xi_2|^2)^{(n-2)/2}},
 \end{align}
with $|\xi_2|=|\lambda_1(z_2-z_1)|\leq R_{12}^2$. If $|y_1|\leq R_{12}/2$, then $U_2\lesssim U_1$,
\begin{align}\label{h-t-1}
h\lesssim U_1^{p-1}U_2\thickapprox \frac{\lambda_1^{(n+2)/2}R_{12}^{2-n}}{\langle y_1\rangle^4}.
\end{align}
If $R_{12}/3\leq |y_1|\leq 2R_{12}^2$, then $U_1\thickapprox \lambda_1^{(n-2)/2}|y_1|^{2-n}$ and $U_2\thickapprox \lambda_1^{(n-2)/2}R_{12}^{2-n}$.  Setting $\hat{y}=y_1/R_{12}$, then
\begin{align}\label{h-t-2}
\begin{split}
    h\lesssim&\
    \lambda_1^{(n+2)/2}R_{12}^{-(n+2)}\left|\left(1+\frac{1}{|\hat{y}|^{n-2}}\right)^p-1-\frac{1}{|\hat{y}|^{p(n-2)}}\right|\\
    \lesssim&\ \frac{\lambda_1^{(n+2)/2}R_{12}^{-(n+2)}}{|\hat{y}|^{n-2}}\thickapprox\frac{\lambda_1^{(n+2)/2}R_{12}^{-4}}{\langle y_1\rangle^{n-2}}.
\end{split}
\end{align}
On the other hand, in the rescaling $z_2$-centered coordinate $y_2=\lambda_2(x-z_2)$, we see that $U_2(x)=\lambda_2^{(n-2)/2}U(y_2)$ and
\begin{align}
   U_1(x)\label{t:U1atU2}
    =\frac{\lambda_2^{(n-2)/2}R_{12}^{2-n}}{(\varepsilon_{12}^4+|y_2-\xi_1|^2)^{(n-2)/2}},
\end{align}
where $\xi_1=\lambda_2(z_1-z_2)$ with $|\xi_1|<1$. Keep in mind that $y_2-\xi_1=\varepsilon_{12}^2y_1$. Thus, if $1\leq |y_2-\xi_1|\leq R_{12}/2$, it means $R_{12}^2\leq |y_1|
\leq R_{12}^3/2$. In this region, we have $U_1\lesssim U_2$,
\begin{align}\label{h-t-3}
    h\lesssim U_2^{p-1}U_1\thickapprox\frac{\lambda_2^{(n+2)/2}R_{12}^{2-n}}{\langle y_2\rangle^4}.
\end{align}
In the outer region $|y_2-\xi_{2}|\geq R_{12}/3$, it is easy to see that
\begin{align}\label{h-t-4}
     h\lesssim U_2^p\thickapprox
\frac{\lambda_1^{(n+2)/2}R_{12}^{-2}}{\langle y_1\rangle^n}\lesssim \frac{\lambda_1^{(n+2)/2}R_{12}^{-4}}{\langle y_1\rangle^{n-2}}.
\end{align}
From \eqref{h-t-1}, \eqref{h-t-2}, \eqref{h-t-3} and \eqref{h-t-4}, we conclude
\begin{align}\label{h-t-all}
    \begin{split}
        h\lesssim &\
        \sum_{i=1}^2\left(\frac{\lambda_i^{\frac{n+2}{2}}R_{12}^{2-n}}{\langle y_i\rangle^{4}}\chi_{\{|y_i|\leq R_{12}/2\}}
        +\frac{\lambda_i^{\frac{n+2}{2}}R_{12}^{-4}}{\langle y_i\rangle^{n-2}}\chi_{\{|y_i|\geq R_{12}/3\}}\right)\\
        \leq&\ \sum_{i=1}^2\left(\frac{\lambda_i^{\frac{n+2}{2}}R^{2-n}}{\langle y_i\rangle^{4}}\chi_{\{|y_i|\leq R/2\}}
        +\frac{\lambda_i^{\frac{n+2}{2}}R^{-4}}{\langle y_i\rangle^{n-2}}\chi_{\{|y_i|\geq R/3\}}\right),
    \end{split}
\end{align}
for any $0<R\leq R_{12}$.
\begin{figure}[ht]
\centering
\includegraphics[width=0.45\textwidth]{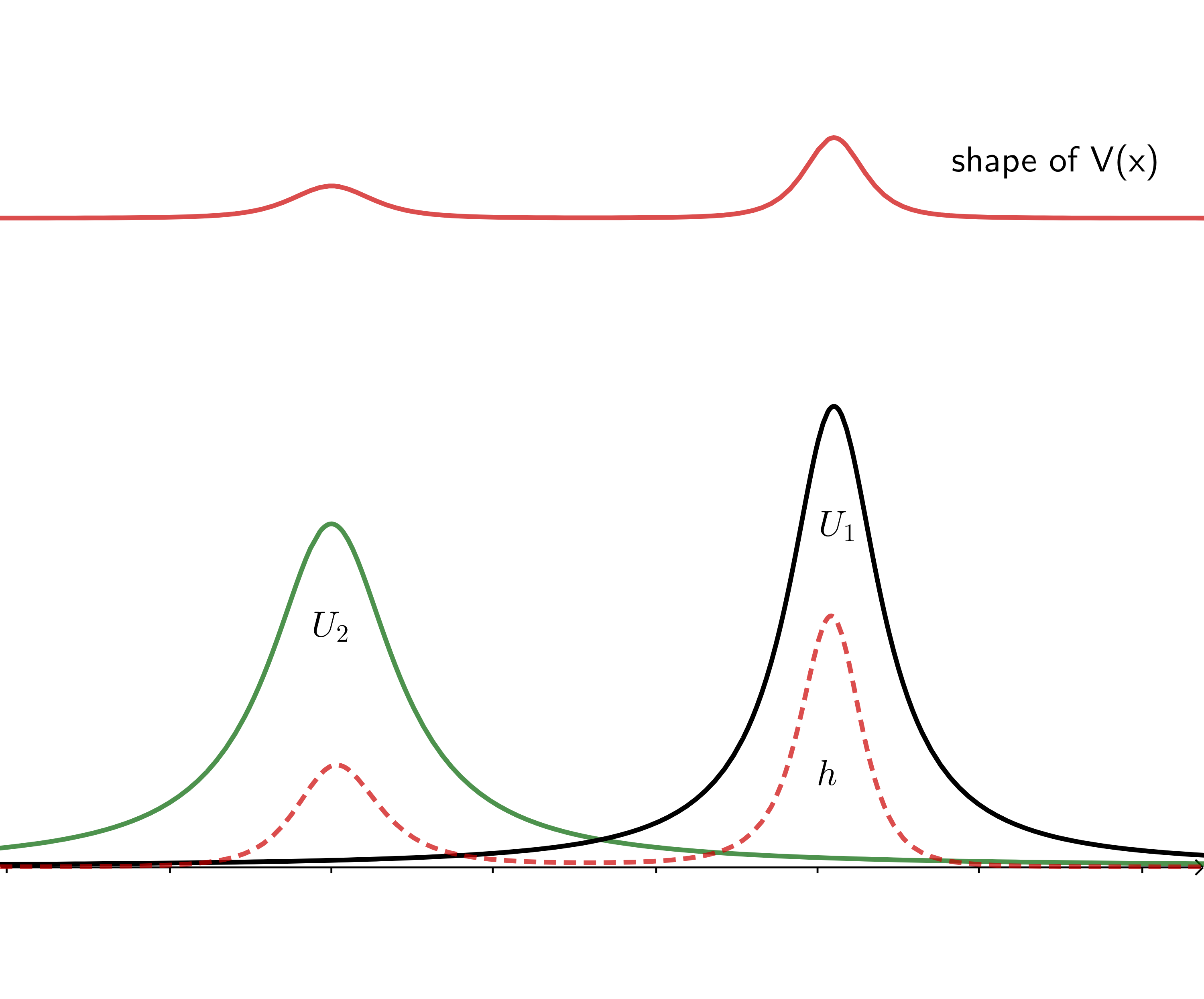}
\includegraphics[width=0.4\textwidth]{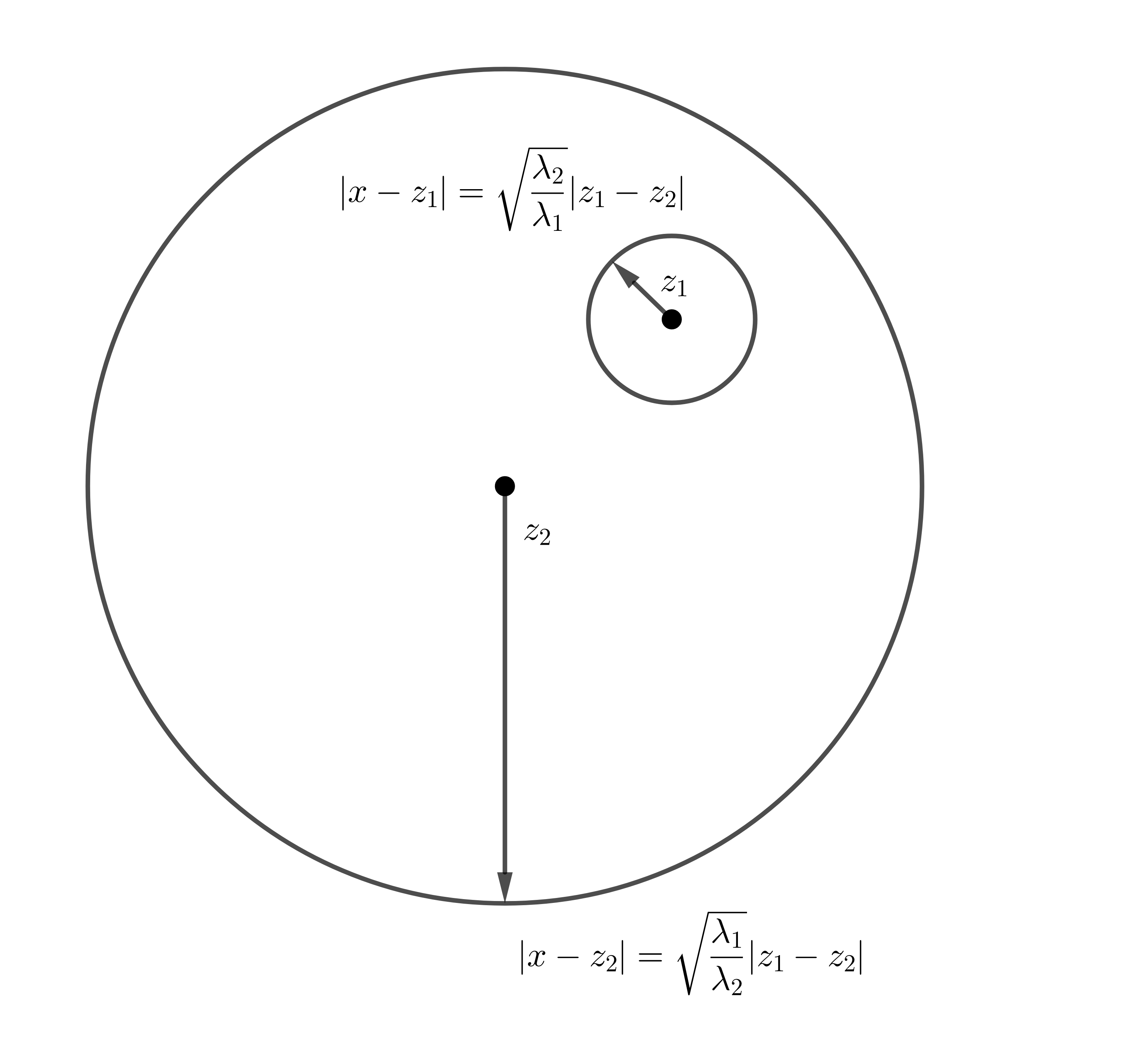}
\caption{$U_1$ and $U_2$ form a bubble cluster. The right picture shows that the influence region of $U_2$ contains that of $U_1$ like bubble tower when $\lambda_1\gg \lambda_2$. However, if $\l_1\approx\l_2$, the core region of them shall be disjoint.}
\label{fig:bubble-cluster}
\end{figure}
\item[$\bullet$] Bubble cluster (see Figure \ref{fig:bubble-cluster}): WLOG, we can assume $\lambda_1\geq\lambda_2$ and $R_{12}=\sqrt{\lambda_1\lambda_2}|z_1-z_2|=\varepsilon_{12}^{-1}\gg1$. In the rescaling $z_1$-centered coordinate $y_1=\lambda_1(x-z_1)$, $U_1(x)=\lambda_1^{(n-2)/2}U(y_1)$ and
\begin{align}\label{c:U2atU1}
  U_2(x)=\frac{\lambda_1^{(n-2)/2}R_{12}^{2-n}}{(\frac{1}{\lambda_2^2|z_1-z_2|^2}+|\frac{y_1}{\lambda_1|z_1-z_2|}-e|^2)^{(n-2)/2}},
\end{align}
with $e=(z_2-z_1)/|z_2-z_1|$. Since $R_{12}\leq \lambda_1|z_1-z_2|$, if  $|y_1|\leq R_{12}/2$,  we have $U_2\lesssim U_1$,
\begin{align}\label{h-c-1}
    h\lesssim U_1^{p-1}U_2\lesssim \frac{\lambda_{1}^{(n+2)/2}R_{12}^{2-n}}{\langle y_1\rangle^4}.
\end{align}
If $ R_{12}/3\leq |y_1|\leq \lambda_1|z_1-z_2|/2$, then $U_1\thickapprox \lambda_1^{(n-2)/2}|y_1|^{2-n}$ and $U_2\thickapprox \lambda_1^{(n-2)/2}R_{12}^{2-n}$. Setting $\hat{y}=y_1/R_{12}$, then
\begin{align}\label{h-c-2}
\begin{split}
    h\lesssim&\
    \lambda_1^{(n+2)/2}R_{12}^{-(n+2)}\left|\left(1+\frac{1}{|\hat{y}|^{n-2}}\right)^p-1-\frac{1}{|\hat{y}|^{p(n-2)}}\right|\\
    \lesssim&\ \frac{\lambda_1^{(n+2)/2}R_{12}^{-(n+2)}}{|\hat{y}|^{n-2}}\thickapprox\frac{\lambda_1^{(n+2)/2}R_{12}^{-4}}{\langle y_1\rangle^{n-2}}.
\end{split}
\end{align}
In the rescaling $z_2$-centered coordinate $y_2=\lambda_2(x-z_2)$, $U_2(x)=\lambda_2^{(n-2)/2}U(y_2)$ and
\begin{align}\label{c:U1atU2}
  U_1(x)=\frac{\lambda_2^{(n-2)/2}R_{12}^{2-n}}{(\frac{1}{\lambda_1^2|z_1-z_2|^2}+|\frac{y_2}{\lambda_2|z_1-z_2|}+e|^2)^{(n-2)/2}}.
\end{align}
 If $|y_2|\leq R_{12}/2$ and $|y_1|\geq \lambda_1|z_1-z_2|/3$, it means $|\frac{y_2}{\lambda_2|z_1-z_2|}+e|\geq1/3$, then
\begin{align}\label{h-c-3}
    h\lesssim U_2^{p-1}U_1\lesssim \frac{\lambda_{2}^{(n+2)/2}R_{12}^{2-n}}{\langle y_2\rangle^4}.
\end{align}
In the outer region $|y_2|\geq R_{12}/3$ and $|y_1|\geq \lambda_1|z_1-z_2|/3\geq R_{12}/3$, we have
\begin{align}\label{h-c-4}
    h\lesssim U_1^p+U_2^p\lesssim
\frac{\lambda_1^{(n+2)/2}R_{12}^{-4}}{\langle y_1\rangle^{n-2}}+\frac{\lambda_2^{(n+2)/2}R_{12}^{-4}}{\langle y_2\rangle^{n-2}}.
\end{align}
From \eqref{h-c-1}, \eqref{h-c-2}, \eqref{h-c-3} and \eqref{h-c-4}, we also have \eqref{h-t-all}.

\item[$\bullet $]For any finite number of bubbles, we shall use a simple inequality (see Lemma \ref{lem:aip})
\[h=\sigma^p-\sum_{i=1}^\nu U_i^p\leq \sum_{i\neq j} [(U_i+U_j)^p-U_i^p-U_j^p].\]
Each one on the RHS can be bounded by the above estimates of two bubbles, see \eqref{h-t-all}. Summing them up, one can obtain $h\leq C(n,\nu)V(x)$.
\end{proof}
\begin{remark}\label{rmk:on-weight} In order to have a simple form of $V$, we bound $h$ just by $\langle y_i\rangle^{2-n}$ in \eqref{h-t-4} and \eqref{h-c-4}. In fact, $h$ decays faster than $V$ at infinity. Such relaxation causes a minor problem for $n=6$ when estimating $\int_{|y_i|\geq R, |y_j|\geq R}VW$ in Proposition \ref{prop:L2-rho0}. Check the estimate \eqref{s-c-2} and \eqref{s-c-6} in Lemma \ref{lem:s-U-rho}. Thanks to the fact that if $n=6$ then $p=2$ and  $\sigma^p-\sum_{i=1}^\nu U_i^p=\sum_{i\neq j}U_iU_j$. We can get around this and directly estimate the integral $\int U_iU_j\rho_0$. Check the estimate \eqref{6-dim-1} in Lemma \ref{lem:6-dim}.
\end{remark}

\begin{lemma}\label{lem:prior-est}
There exist a positive $\delta_0$ and a constant $C$, independent of $\delta$, such that for all $\delta\leqslant\delta_0$, if $\{U_i\}_{1\leq i\leq \nu}$ is a $\delta$-interacting bubble family and $\phi$ solves the equation
\begin{align}\label{w2.4}
\begin{cases}
    \Delta \phi+p\sigma^{p-1}\phi=h,\\
    \int U_i^{p-1}\phi Z_{i}^{a}=0,\quad i=1,\cdots, \nu;\ a=1,\cdots, n+1,
\end{cases}
\end{align}
then
\begin{align}\label{prior-est}
    \|\phi\|_{*}\leq C\|h\|_{**}.
\end{align}
\end{lemma}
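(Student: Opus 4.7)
The plan is to argue by contradiction via a blow-up analysis combined with a weighted maximum-principle barrier tailored to $V$ and $W$, following the classical reduction strategy in the spirit of \citet{delPino2003,wei2010infinitely}. Assume \eqref{prior-est} fails: there exist $\delta_k\to 0$, $\delta_k$-interacting families $\{U_i^{(k)}\}_{1\leq i\leq\nu}$, and pairs $(\phi_k,h_k)$ solving \eqref{w2.4} with $\|\phi_k\|_*=1$ and $\|h_k\|_{**}\to 0$. Write $R_k\to\infty$ for the minimal scale from \eqref{define:Rij}. The proof then splits into an interior bound near each bubble core, obtained by rescaling and non-degeneracy, and an exterior bound on the necks and the far region, obtained by a supersolution argument.

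For the interior step, fix $i$ and rescale by $\tilde\phi_i^{(k)}(y):=R_k^{n-2}\lambda_{i,k}^{-(n-2)/2}\phi_k\bigl(z_{i,k}+y/\lambda_{i,k}\bigr)$. The normalization $\|\phi_k\|_*\leq 1$ becomes $|\tilde\phi_i^{(k)}(y)|\lesssim\langle y\rangle^{-2}$ on $\{|y|\leq R_k\}$, while $|h_k|\leq\|h_k\|_{**}V$ forces $|\tilde h_i^{(k)}(y)|\lesssim\|h_k\|_{**}\langle y\rangle^{-4}\to 0$ on compact sets. By weak interaction, in these rescaled coordinates every other bubble either escapes to infinity or collapses to a single point, so $p\sigma_k^{p-1}\to pU^{p-1}$ locally; elliptic regularity produces a limit $v_i$ solving $\Delta v_i+pU^{p-1}v_i=0$ on $\mathbb{R}^n$ with $|v_i|\lesssim\langle y\rangle^{-2}$. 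Passing to the limit in $\int U_{i,k}^{p-1}\phi_k Z_{i,k}^a=0$ shows $v_i$ is orthogonal to every element of $\ker(\Delta+pU^{p-1})$, so the non-degeneracy theorem forces $v_i\equiv 0$. A standard diagonal argument upgrades this to $\sup_{|y_{i,k}|\leq R_k/2}|\phi_k|/W\to 0$ for each $i$.

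The exterior step is the main obstacle: I would construct a positive function $\Phi\approx W$ together with a constant $c>0$ independent of $k$ such that $\Delta\Phi+p\sigma^{p-1}\Phi\leq -cV$ on the complement $\Omega_k$ of the bubble cores together with the annular necks. In the far region the leading piece of $W$ behaves like $\lambda_i^{(n-2)/2}R_k^{-4}\langle y_i\rangle^{4-n}$, whose Laplacian is $\approx -\lambda_i^{(n+2)/2}R_k^{-4}\langle y_i\rangle^{-n}$, matching $V$ there; in each neck $R_k/4\leq|y_i|\leq\cdots$ the leading piece $\lambda_i^{(n-2)/2}R_k^{2-n}\langle y_i\rangle^{-2}$ has Laplacian $\approx -\lambda_i^{(n+2)/2}R_k^{2-n}\langle y_i\rangle^{-4}$, which dominates the contribution of $p\sigma^{p-1}\Phi$ because $\sigma^{p-1}\lesssim\langle y_i\rangle^{-4}$ in that annulus. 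The delicate point is to patch these two pieces across overlapping regions while preserving the supersolution property, and here the case-by-case distinction between bubble towers and bubble clusters introduced above Proposition~\ref{prop:on-weight} is essential: in each configuration one adds a small positive correction (controlled by $R_k^{-\kappa}$ for some $\kappa>0$) to keep $\Phi\approx W$ while maintaining $\Delta\Phi+p\sigma^{p-1}\Phi\leq -cV$.

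Once $\Phi$ is built, the maximum principle applied to $|\phi_k|-M(\|h_k\|_{**}+\varepsilon_k)\Phi$, with $\varepsilon_k:=\sup_{\partial(\text{cores})}|\phi_k|/W\to 0$ from the interior step, gives
\begin{equation*}
\sup_{\Omega_k\cup(\text{necks})}\frac{|\phi_k|}{W}\;\lesssim\;\|h_k\|_{**}+\varepsilon_k\;\longrightarrow\;0.
\end{equation*}
Combined with the interior estimate, this forces $\|\phi_k\|_*\to 0$, contradicting $\|\phi_k\|_*=1$. This completes the proof modulo the barrier construction, which is where essentially all of the technical work lies and which must be carried out separately in each of the bubble-tower and bubble-cluster geometries (and in the configurations that mix them).
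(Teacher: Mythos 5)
Your strategy---blow-up in the bubble cores via non-degeneracy, a supersolution barrier comparable to $W$ on the necks and far region, patched by the maximum principle---is exactly the paper's. But the proposal has gaps in both halves. In the interior step, $p\sigma_k^{p-1}\to pU^{p-1}$ only holds locally away from the rescaled concentration points $\tilde z_j=\lambda_i(z_j-z_i)$ of the other bubbles; the limit $v_i$ solves the linearized equation only on $\mathbb{R}^n$ with those points removed, and one must invoke the decay $|v_i|\lesssim|y-\bar z_j|^{4-n}$ (inherited from the $W$-bound) to remove the singularities before non-degeneracy yields $v_i\equiv 0$. Moreover, the claimed interior conclusion $\sup_{|y_{i,k}|\leq R_k/2}|\phi_k|/W\to 0$ cannot follow from the blow-up alone: that ball contains the neck regions $A_j$, and the diagonal argument does not deliver uniformity there. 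What the blow-up gives is smallness on fixed compact sets avoiding the other concentration points, i.e.\ on the sets $\Omega_i$ of the paper; that is what should furnish your boundary data $\varepsilon_k$.

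In the exterior step, the crucial inequality $\Delta\Phi+p\sigma^{p-1}\Phi\leq -cV$ is not proved; it is precisely the content of the paper's Claims~2 and~3, which establish $\sigma^{p-1}\tilde W\lesssim \epsilon_1 V$ on the necks and the far region after a careful case analysis over tower and cluster geometries. You correctly identify this as the technical heart but replace it by an order-of-magnitude sketch together with the suggestion of adding small positive corrections $O(R_k^{-\kappa})$ to $W$. That is not what the paper does (and is not justified): it works with $\Phi=W$ directly in the exterior and with the truncation $\tilde W$ (dropping the terms indexed by $T^-_i\cup C^-_i$) in each neck $A_i$, closing the barrier argument via the elementary comparison $W\leq 2\tilde W$ there. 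A small separate error: the Laplacian of the far-field piece $\lambda_i^{(n-2)/2}R^{-4}\langle y_i\rangle^{4-n}$ behaves like $-\lambda_i^{(n+2)/2}R^{-4}\langle y_i\rangle^{2-n}$, not $\langle y_i\rangle^{-n}$.
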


\begin{proof}\label{blow-up}
We use blow-up arguments to prove  \eqref{prior-est}. Suppose there are $k\to\infty$, $\frac{1}{k}$-interacting bubble families $\left\{U^{(k)}_i:=U[z^{(k)}_i,\l^{(k)}_i]\right\}_{1\leq i\leq\nu}$, $h=h_k$ with $\|h_k\|_{**}\to0$, and $\phi=\phi_k$ with $\|\phi_k\|_{*}=1$ solving the equation
\begin{equation}\label{eq-phi-k}
\begin{cases}
\Delta \phi_k +p\sigma_k^{p-1}\phi_k=h_k,\quad&\text{in}\ \mathbb{R}^n,\\
\int U_i^{p-1}Z_i^a\phi_k=0,\quad &i=1,\cdots, \nu;\, 1\leq a\leq n+1.
\end{cases}
\end{equation}
For simplicity, here and after we denote $U_i:=U^{(k)}_i$ and $\sigma_k:=\sum_{i=1}^{\nu}U^{(k)}_i$.

In the beginning, let us introduce some notations. After finite steps of choosing subsequences,  we can make the following assumptions. For each $i\in I=\{1,\cdots,\nu\}$, let $z^{(k)}_{ij}:=\lambda^{(k)}_i(z^{(k)}_j-z^{(k)}_i)$, $j\in I\setminus\{i\}$, we can assume that $\lim_{k\to\infty}z^{(k)}_{ij}$ exists or $\lim_{k\to\infty}z^{(k)}_{ij}=\infty$. Then we divide indices $I\setminus\{i\}$ into two groups:
\begin{align}\label{i-2index}
    \begin{split}
        I_{i,1}&:=\{j\in I\setminus\{i\}\ |\  \lim_{k\to\infty}z^{(k)}_{ij}\ \text{exists and}\  \lim_{k\to\infty}|z^{(k)}_{ij}|<\infty\},\\
        I_{i,2}&:=\{j\in I\setminus\{i\}\ |\  \lim_{k\to\infty}z^{(k)}_{ij}=\infty\}.
    \end{split}
\end{align}
Furthermore, we divide all bubbles into four groups: ones are higher (lower) than $U_i$ in tower relationship, ones are much higher (otherwise) than $U_i$ in cluster relationship.
\begin{align}\label{i-4index}
    \begin{split}
        T^{+}_{i}&:=\{j\in I\setminus\{i\}\ |\  \lambda^{(k)}_j\geq \lambda^{(k)}_j, R^{(k)}_{ij}=\sqrt{\lambda^{(k)}_j/\lambda^{(k)}_i}\},\\
        T^{-}_{i}&:=\{j\in I\setminus\{i\}\ |\  \lambda^{(k)}_j< \lambda^{(k)}_i, R^{(k)}_{ij}=\sqrt{\lambda^{(k)}_i/\lambda^{(k)}_j}\},\\
         C^{+}_{i}&:=\{j\in I\setminus\{i\}\ |\  \lim_{k\to\infty}\lambda^{(k)}_j/ \lambda^{(k)}_i=\infty, R^{(k)}_{ij}=\sqrt{\lambda^{(k)}_i\lambda^{(k)}_j}|z^{(k)}_i-z^{(k)}_j|\},\\
          C^{-}_{i}&:=\{j\in I\setminus\{i\}\ |\  \lim_{k\to\infty}\lambda^{(k)}_j/ \lambda^{(k)}_i<\infty, R^{(k)}_{ij}=\sqrt{\lambda^{(k)}_i\lambda^{(k)}_j}|z^{(k)}_i-z^{(k)}_j|\}.
    \end{split}
\end{align}
Recall that  $y^{(k)}_i=\lambda^{(k)}_i(x-z^{(k)}_i)$. Let us define
\begin{align}\label{Omega-Omegai}
    \begin{split}
        \Omega^{(k)}&:=\bigcup_{i\in I}\{|y^{(k)}_i|\leq L\},\\
        \Omega^{(k)}_i&:=\{|y^{(k)}_i|\leq L\}\bigcap\left(\bigcap_{j\in I, j\neq i}\{|y^{(k)}_i-z^{(k)}_{ij}|\geq \epsilon\}\right),
     \end{split}\end{align}
 with a large constant $L=L(n,\nu)$ and a small constant $\epsilon=\epsilon(n,\nu)$ to be determined later (see Figure \ref{fig:blow-up} for an illustration of $\Omega_1$).
It is easy to see that, for large $k$, we have
\begin{align}\label{Omegai-k}
    \begin{split}
        \Omega^{(k)}_i&:=\{|y^{(k)}_i|\leq L\}\bigcap\left(\bigcap_{j\in T^{+}_{i}\cup C^{+}_{i}}\{|y^{(k)}_i-z^{(k)}_{ij}|\geq \epsilon\}\right).
    \end{split}
\end{align}

For simplicity, here and after we drop the superscript if there is no confusion.
It is convenient to  denote $W=\sum_{i\in I}(w_{i,1}+w_{i,2})$  and $V=\sum_{i\in I}(v_{i,1}+v_{i,2})$ with
\begin{align}\label{wi-vi}
    \begin{split}
        w_{i,1}(x)=\frac{\lambda_i^{\frac{n-2}{2}}R^{2-n}}{\langle y_i\rangle^2}\chi_{\{|y_i|\leq R\}},\quad
        w_{i,2}(x)=\frac{\lambda_i^{\frac{n-2}{2}}R^{-4}}{\langle y_i\rangle^{n-4}}\chi_{\{|y_i|\geq R/2\}},\\
        v_{i,1}(x)=\frac{\lambda_i^{\frac{n+2}{2}}R^{2-n}}{\langle y_i\rangle^4}\chi_{\{|y_i|\leq R\}},\quad
        v_{i,2}(x)=\frac{\lambda_i^{\frac{n+2}{2}}R^{-4}}{\langle y_i\rangle^{n-2}}\chi_{\{|y_i|\geq R/2\}},
    \end{split}
\end{align}
where $R=R^{(k)}=\frac{1}{2}\min_{i\neq j}\{R^{(k)}_{ij}\}\to\infty$ as $k\to\infty$.

Since $\|\phi_k\|_*=1$, we have $|\phi_k|(x)\leq W(x)$ and there exists a sequence of points $\{x_k\}_{k\in\mathbb{N}}$ such that
\begin{align}\label{phik-wk}
    |\phi_{k}|(x_k)=W(x_k).
\end{align}

First, going to a subsequence if necessary, we  consider the case that $\{x_k\}_{k\in\mathbb{N}}\subset \Omega$.

\emph{Case 1: $\{x_k\}_{k\in\mathbb{N}}\subset \Omega_{i_0}$ for some $i_0\in I$.} Let $i_0=1$ and  define
\begin{align}\label{phik-hk-sigmak-1}
\begin{cases}
    \tilde{\phi}_k(y_1):=W^{-1}(x_k)\phi_k(x)\quad \text{with}\ y_1=\lambda_1(x-z_1),\\
    \tilde{h}_k(y_1):= \lambda_1^{-2}W^{-1}(x_k)h_k(x),\quad
    \tilde{\sigma}_k(y_1):=\sigma_k(x).
\end{cases}
\end{align}
Then $\tilde{\phi}_k$ satisfy
\begin{align}\label{eq-phik-tilde}
\begin{cases}
 \Delta \tilde{\phi}_k(y_1) +p\lambda_1^{-2}\tilde{\sigma}_k^{p-1}(y_1)\tilde{\phi}_k(y_1)=\tilde{h}_k(y_1)\quad &\text{in}\ \mathbb{R}^n,\\
 \int U^{p-1}Z^a\tilde{\phi}_kdy_1=0,\quad &1\leq a\leq n+1.
\end{cases}
\end{align}
Here $U=U[0,1](y_1)$ and $Z^a=Z^a(y_1)=Z_1^a(x)$  defined in \eqref{eq:Z}.  Let $\tilde{z}_j=z^{(k)}_{1j}$ , $\bar{z}_j=\lim_{k\to\infty}\tilde{z}^{(k)}_{1j}$ and
\begin{align*}
 E_1:= \bigcap_{j \in T^{+}_{1}\cap I_{1,1}}\{|y_1-\bar{z}_{j}|\geq 1/M\},\quad
      E_2:= \bigcap_{j \in C^{+}_{1}\cap I_{1,1}}\{|y_1-\bar{z}_{j}|\geq |\bar{z}_j|/M\}.
\end{align*}
Let
\begin{align}\label{K-D}
    \begin{split}
        K_M:=&\ \{|y_1|\leq M\}\cap E_1\cap E_2.
    \end{split}
\end{align}
Suppose $M\geq2\max\{L, \epsilon^{-1}\}$, it is easy to see that  $\Omega^{(k)}_i\subset\subset K_M$ for $k$ large.

\begin{claim}\label{clm:blowup}
In each compact subset $K_M$, it holds that, as $k\to\infty$,
\begin{align}
    \begin{split}
     \lambda_1^{-2}\tilde{\sigma}_k^{p-1}\to\  U[0,1],\quad |\tilde{h}_k| \to\  0,
       \end{split}\end{align}
uniformly. Moreover, we have
\begin{align}
    |\tilde{\phi}_k|(y_1)\lesssim |y_1-\tilde{z}_j|^{4-n},\quad j\in T^{+}_{1}\cup C^{+}_{1}.
\end{align}
\end{claim}
We postpone the proof of Claim \ref{clm:blowup} and finish the blow-up argument in Case 1.
By the standard elliptic regularity theorem, the Claim \ref{clm:blowup} shows that there exists a subsequence  of $\tilde{\phi}_k$ uniformly converges in each $K_M$. Furthermore, by the diagonal arguments, let $M\to \infty$, we have a subsequence of $\tilde{\phi}_k$ weakly converges to $\tilde{\phi}$ with
\begin{align}\label{eq-phi-tilde}
\begin{cases}
 \Delta \tilde{\phi} +pU^{p-1}\tilde{\phi}=0,\quad &\text{in}\ \mathbb{R}^n\setminus \{\bar{z}_j \ |\ j\in (T^{+}_{1}\cup C^{+}_{1})\cap I_{1,1}\},\\
 |\tilde{\phi}|(y_1)\lesssim |y_1-\bar{z}_j|^{4-n},\quad &j\in (T^{+}_{1}\cup C^{+}_{1})\cap I_{1,1},\\
 \int U^{p-1}Z^a\tilde{\phi}dy_1=0,\quad &1\leq a\leq n+1.
\end{cases}
\end{align}
Notice that all singular $\bar{z}_j$ are removable. Together with the non-degeneracy of Talenti bubbles, we get $\tilde{\phi}\equiv0$. However, since $|Y_k|\leq L$, going to a subsequence if necessary, then $\lim_{k\to\infty}Y_k=Y_{\infty}$ and consequently $\tilde{\phi}(Y_{\infty})=1$. This is a contradiction.

\begin{figure}[ht]
\centering
\includegraphics[width=0.6\textwidth]{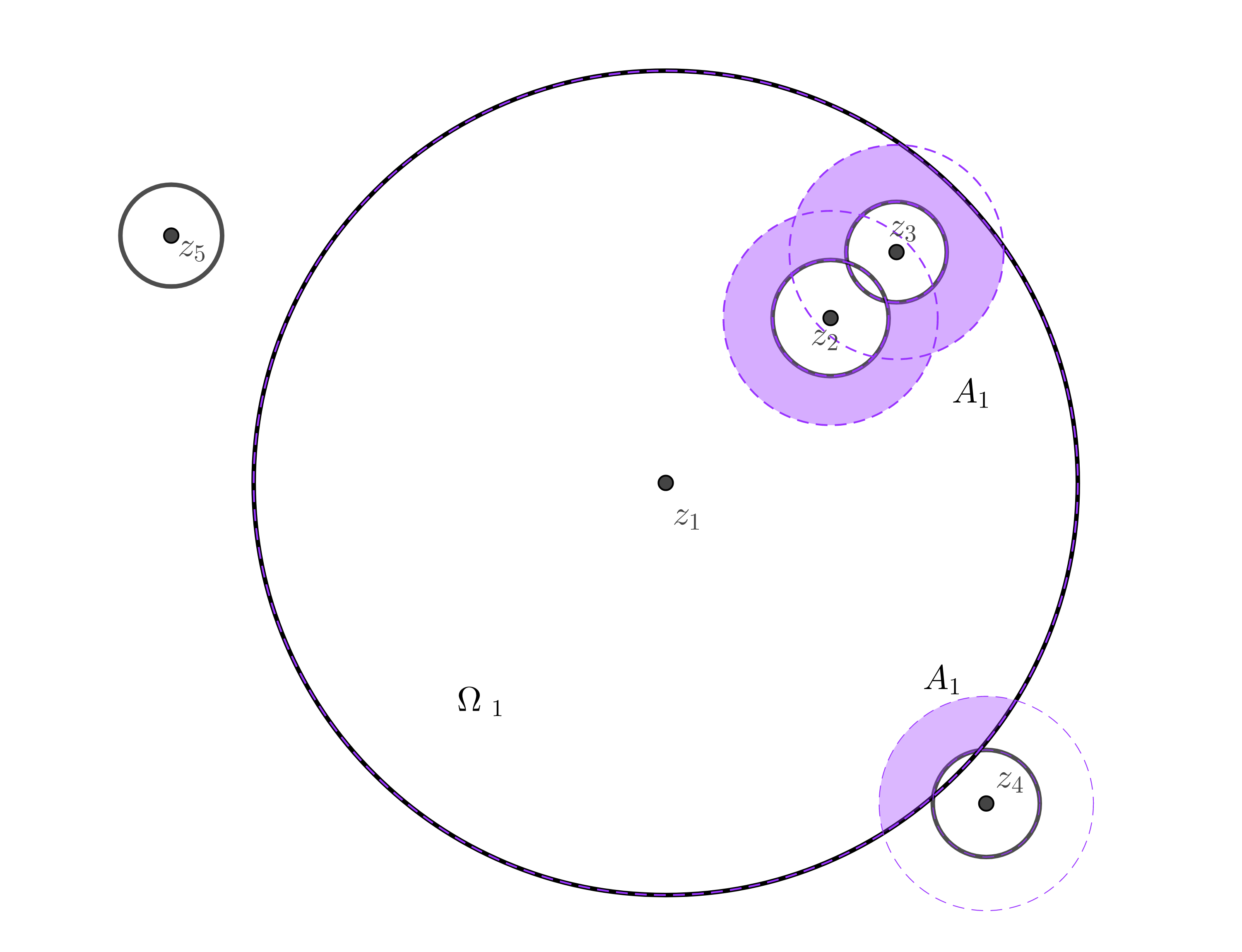}
\caption{Illustration for the blow-up regions of a simple bubble configuration. The solid circles denote $\{y_i=L\}$ for $i=1,\cdots,5$. The dashed circles mean $\{|y_1-\tilde z_{1j}|=\epsilon\}$. The shaded regions constitute $A_1$. }
\label{fig:blow-up}
\end{figure}

  \emph{Case 2: $\{x_k\}_{k\in\mathbb{N}}\subset \Omega\setminus\left(\cup_{i\in I}\Omega_i\right)$.} There exist some  $i\in I$ such that, choosing a subsequence if necessary, $\{x_k\}_{k\in\mathbb{N}}\subset A_i$ where
  \begin{align}
      A_i:=\bigcup_{j\in J_i}\left\{|y_i|\leq L,\ |y_i-\tilde{z}_{ij}|\leq \epsilon,\ |y_j|\geq L\right\},\quad J_i=T^{+}_{i}\cup C^{+}_{i}.
  \end{align}
  We can choose the \emph{smallest} $A_{i_0}$ such that, for each $A_i\supset \{x_k\}_{k\in\mathbb{N}}$, it holds that $\lambda_{i_0}\geq\lambda_i$. For simplicity, let $i_0=1$ and take notations as before. See Figure \ref{fig:blow-up} for $A_i$ in a simple case. Define
\begin{align}
    \begin{split}
        \tilde{W}(x):=&\ \sum_{j\in J_1} (w_{j,1}(x)+w_{j,2}(x))+w_{1,1}(x),\\
        \tilde{V}(x):=&\ \sum_{j\in J_1} (v_{j,1}(x)+v_{j,2}(x))+v_{1,1}(x).
    \end{split}
\end{align}
We will use the following two claims to show that $\tilde W(x)$ is a supersolution to our problem in the region $A_1$.
\begin{claim}\label{clm:out}
For any two bubbles,  say $U_i$ and $U_j$, suppose $k$ large enough, in the region $\{|y_i|\geq L, |y_j|\geq  L\}$ it holds that
\begin{align}
    \sum_{m,l\in\{i,j\}}U_m^{p-1}(w_{l,1}+w_{l,2})\leq \epsilon_1\sum_{l\in\{i,j\}}(v_{l,1}+v_{l,2}),
\end{align}
with small $\epsilon_1=\max\{L^{\frac{-4}{n-2}}, 2^nL^{-\frac12},2\epsilon^2\}$.
\end{claim}
\begin{claim}\label{clm:1-in}
Fix a bubble $U_i$, let $J_i=T^{+}_{i}\cup C^{+}_{i}$. In the region $A_i$,  we have
\begin{align}
    U_i^{p-1}\sum_{j\in J_i}(w_{j,1}+w_{j,2})\leq&\  \epsilon_1\sum_{j\in J_i}(v_{j,1}+v_{j,2}),\label{clm3-1}\\
    \sum_{j\in J_i}U_j^{p-1}w_{i,1}\leq&\ \epsilon_1\sum_{j\in J_i}(v_{j,1}+v_{j,2})+\epsilon_1v_{i,1},\label{clm3-2}
\end{align}
when $k$ large enough.
\end{claim}
We also postpone the proofs of Claim \ref{clm:out} and Claim \ref{clm:1-in}.
By \eqref{wi-vi}, it is easy to see $\Delta \tilde{W}\leq -2(n-4)\tilde{V}$.  In the region $A_1$, it is easy to see $\sum_{j\in T^-_1\cup C^-_1}U_j\ll U_1$ for $k$ large. Therefore
\[\sigma_k^{p-1}=\left(\sum_{j\in J_1}U_j+U_{1}+\sum_{j\in T^{-}_{1}\cup C^{-}_{1}}U_{j}\right)^{p-1}\leq \sum_{j\in J_1}U_{j}^{p-1}+ \frac{5}{4}U_1^{p-1}. \]
Consequently, it follows from Claim \ref{clm:out} and Claim \ref{clm:1-in} that
\begin{align}\label{barrier-1}
    \begin{split}
        \sigma_k^{p-1}\tilde{W}
        =&\ \sum_{i,j\in J_1}U_i^{p-1}\left(w_{j,1}+w_{j,2}\right)+\frac{5}{4}U_1^{p-1}\sum_{j\in J_1}\left(w_{j,1}+w_{j,2}\right)\\
       &\ + w_{1,1}\sum_{j\in J_1}U_j^{p-1}+\frac{5}{4}U_1^{p-1}w_{1,1}\\
        \leq&\  (4\nu^2\epsilon_1+\frac{5}{4})\tilde{V}\quad \text{in}\ A_1.
    \end{split}
\end{align}
Then, by choosing $L$ large and $\epsilon$ small such that $(4\nu^2\epsilon_1+5/4)p\leq 3$,
\begin{align}
    \Delta \tilde{W}+p\sigma_k^{p-1}\tilde{W}\leq -\tilde{V}\quad \text{in}\ A_1.
\end{align}
From Case 1, we conclude that, for $k$ large,
\begin{align}
    \label{interior-est-1}
    |\phi_k|(x)\leq C\|h_k\|_{**}W(x)\quad \text{in}\ \cup_{i\in I}\Omega_i,
\end{align}
with a constant $C=C(n,\nu)$. On the other hand, if $i\in T^{-}_{1}$, then $|y_i|=\frac{\lambda_i}{\lambda_1}|y_1-\tilde{z}_i|\leq L$. We have, for $k$ large,
\begin{align}\label{barrier-2}
\begin{split}
    \frac{w_{1,1}}{w_{i,1}}=&\ \left(\frac{\lambda_1}{\lambda_i}\right)^{\frac{n-2}{2}}\frac{\langle y_i\rangle^2}{\langle y_1\rangle^2}\geq L^{-2}\left(\frac{\lambda_1}{\lambda_i}\right)^{\frac{n-2}{2}}\gg1,\\
    \frac{v_{1,1}}{v_{i,1}}=&\ \left(\frac{\lambda_1}{\lambda_i}\right)^{\frac{n+2}{2}}\frac{\langle y_i\rangle^4}{\langle y_1\rangle^4}\geq L^{-4}\left(\frac{\lambda_1}{\lambda_i}\right)^{\frac{n+2}{2}}\gg1.
\end{split}
\end{align}
If $i\in C^{-}_{1}$, then $|y_i|=\frac{\lambda_i}{\lambda_1}|y_1-\tilde{z}_i|\geq \frac{\lambda_i |\tilde{z}_i|}{2\lambda_1}=\frac{\lambda_i|z_1-z_i|}{2}$. We have
\begin{align}\label{barrier-3}
\begin{split}
    \frac{w_{1,1}}{w_{i,2}}=&\
    R^{6-n} \left(\frac{\lambda_1}{\lambda_i}\right)^{\frac{n-2}{2}}\frac{\langle y_i\rangle^{n-4}}{\langle y_1\rangle^2}\geq 2^{4-n} L^{-2}R^2\left(\frac{\lambda_1}{\lambda_i}\right)\gg1,\\
    \frac{v_{1,1}}{v_{i,2}}=&\ R^{6-n}\left(\frac{\lambda_1}{\lambda_i}\right)^{\frac{n+2}{2}}\frac{\langle y_i\rangle^{n-2}}{\langle y_1\rangle^4}\geq 2^{2-n} L^{-4}R^{4}\left(\frac{\lambda_1}{\lambda_i}\right)^{2}\gg1.
\end{split}
\end{align}
From \eqref{interior-est-1}, \eqref{barrier-2},  and \eqref{barrier-3}, we have
\begin{align}\label{barrier-4}
\begin{split}
    W=&\ \tilde{W}+\sum_{j\in T^{-}_{1}\cup C^{-}_{1}}(w_{j,1}+w_{j,2})\leq 2\tilde{W}\quad \text{in}\ A_1,\\
     V=&\ \tilde{V}+\sum_{j\in T^{-}_{1}\cup C^{-}_{1}}(v_{j,1}+v_{j,2})\leq 2\tilde{V}\quad \text{in}\ A_1.
 \end{split}
\end{align}
By \eqref{barrier-1}, \eqref{interior-est-1} and \eqref{barrier-4}, it is easy to see that $\pm 10C\|h_k\|_{**}\tilde{W}$ is an upper (lower) barrier for $\phi_k$ in $A_1$. It follows that
\begin{align}
    |\phi_k|(x_k)\tilde{W}^{-1}(x_k)\leq 10C\|h_k\|_{**}\to0.
\end{align}
 It  contradicts to \eqref{phik-wk} since $W(x_k)\geq \tilde{W}(x_k)$.

\emph{Case 3: $\{x_k\}_{k\in\mathbb{N}}\subset  \Omega^c=\cup_{i\in I}\{|y_i|\geq L\}$.}
As before, it is easy to see that $\Delta W\leq -2(n-4)V$.  From Claim \ref{clm:out},   we also have
\begin{align}
    \begin{split}
        \sigma_k^{p-1}W\leq&\    \nu^2\epsilon_1V\quad \text{in}\ \Omega^c.
    \end{split}
\end{align}
Then,
\begin{align}
    \Delta W+p\sigma_k^{p-1}W\leq -V\quad \text{in}\ \Omega^c.
\end{align}
From Case 1 and Case 2, we conclude that, for $k$ large,
\begin{align}
    \label{interior-est-2}
    |\phi_k|(x)\leq C\|h_k\|_{**}W(x)\quad \text{in}\ \Omega.
\end{align}
In particular, we have
\begin{align}
    |\phi_k|(x)\leq C\|h_k\|_{**}W(x)\quad \text{on}\ \partial \Omega^c=\partial\Omega.
\end{align}
Thus $\pm C\|h_k\|_{**}W$ is an upper (lower) barrier for $\phi_k$ in $\Omega^c$. It follows that
\begin{align}
    |\phi_k|(x_k)W^{-1}(x_k)\leq C\|h_k\|_{**}\to0.
\end{align}
 It  contradicts to \eqref{phik-wk}.

 To complete the proof of \eqref{prior-est}, it suffices to prove the three claims above.
 \begin{proof}[Proof of Claim \ref{clm:blowup}] To prove the claim, we need to use a simple inequality
\begin{align}
    \label{ineq-1}
    \frac{\sum_{i} a_i}{\sum_{i}b_i}\leq \max_i\{\frac{a_i}{b_i}\},
\end{align}
holds for any positive numbers $a_i, b_i$. Thus by \eqref{eq-phik-tilde}
\[|\tilde \phi_k|(y_1)=\frac{|\phi_k(x)|}{W(x_k)}\leq \frac{W(x)}{W_k(x)}\leq \max_{j}\left\{\frac{w_{j,1}(x)+w_{j,2}(x)}{w_{j,1}(x_k)+w_{j,2}(x_k)}\right\},\]
\[|\tilde h_k|(x)=\frac{|h_k(x)|}{\l_1^2W(x_k)}\leq \|h_k\|_{**}\frac{V(x)}{\lambda_1^2W(x_k)}\leq \|h_k\|_{**}\max_{j}\left\{\frac{v_{j,1}(x)+v_{j,2}(x)}{\l_1^2w_{j,1}(x_k)+\l_1^2w_{j,2}(x_k)}\right\}.\]

We shall estimate the RHS of the above two equations on each compact subset $K_M$. They are divided into the following four cases.


\noindent
\emph{The case $j\in T^{+}_{1}$.} By \eqref{t:U1atU2}, we have
\begin{align}\label{T11-U}
   U_j(x)\thickapprox \frac{\lambda_j^{\frac{n-2}{2}}}{\langle y_j\rangle^{n-2}}
    \thickapprox\frac{\lambda_1^{\frac{n-2}{2}}R_{1j}^{2-n}}{(\varepsilon_{1j}^2+|y_1-\tilde{z}_j|)^{n-2}}\lesssim M^{n-2}\lambda_1^{\frac{n-2}{2}}R_{1j}^{2-n}.
\end{align}
Let $Y_k=\lambda_1(x_k-z_1)$. Since $|y_j|=R_{1j}^2|y_1-\tilde{z}_j|\geq R_{1j}^2/M\geq R$ for $k $ large, we only need to consider
\begin{align}
        \frac{w_{j,2}(x)}{w_{j,2}(x_k)}\thickapprox&\ \frac{(\varepsilon_{1j}^{2}+|Y_k-\tilde{z}_j|)^{n-4}}{(\varepsilon_{1j}^{2}+|y_1-\tilde{z}_j|)^{n-4}}\lesssim \frac{1}{|y_1-\tilde{z}_j|^{n-4}},
        \label{T11-phi}\\
        \frac{v_{j,2}(x)}{\lambda_1^2w_{j,2}(x_k)}\thickapprox&\
        \frac{(\varepsilon_{1j}^{2}+|Y_k-\tilde{z}_j|)^{n-4}}{(\varepsilon_{1j}^{2}+|y_1-\tilde{z}_j|)^{n-2}}\lesssim M^{n-2}.\label{T11-h}
\end{align}
\emph{The case $j\in T^{-}_{1}$.} By \eqref{t:U2atU1}, we have
\begin{align}\label{T12-U}
   U_j(x) \thickapprox\frac{\lambda_1^{\frac{n-2}{2}}R_{1j}^{2-n}}{(1+\varepsilon_{1j}^2|y_1-\tilde{z}_j|)^{n-2}}\lesssim \lambda_1^{\frac{n-2}{2}}R_{1j}^{2-n}.
\end{align}
It holds that $|y_j|=R_{1j}^{-2}|y_1-\tilde{z}_j|\leq MR_{1j}^{-2}\leq R$ for $k$ large. Then we have
\begin{align}
        \frac{w_{j,1}(x)}{w_{j,1}(x_k)}\thickapprox&\ \frac{(1+\varepsilon_{1j}^{2}|Y_k-\tilde{z}_j|)^{2}}{(1+\varepsilon_{1j}^{2}|y_1-\tilde{z}_j|)^{2}}\lesssim
        1, \label{T12-phi}\\
       \frac{v_{j,1}(x)}{\lambda_1^2w_{j,1}(x_k)}\thickapprox&\
         R_{1j}^{-4} \frac{(1+\varepsilon_{1j}^{2}|Y_k-\tilde{z}_j|)^{2}}{(1+\varepsilon_{1j}^{2}|y_1-\tilde{z}_j|)^{4}}\lesssim R_{1j}^{-4}.\label{T12-h}
\end{align}
\emph{The case $j\in C^{+}_{1}$.} Let $\lambda=\lambda_j/\lambda_1$, then $R_{1j}=\sqrt{\lambda}|\tilde{z}_j|$ and $|\tilde{z}_j|\geq1$. It holds that, no matter $|\bar{z}_j|<\infty$ or $|\bar{z}_j|=\infty$, we have $|y_j|=\lambda |y_1-\tilde{z}_j|\geq \sqrt{\lambda}R_{1j}/M\geq R$ for $k$ large. Then
\begin{align}\label{C11-U}
   U_j(x) \thickapprox\frac{\lambda_1^{\frac{n-2}{2}}}{(\lambda^{-1/2}+\lambda^{1/2}|y_1-\tilde{z}_j|)^{n-2}}
   \lesssim \lambda_1^{\frac{n-2}{2} }M^{n-2}R_{1j}^{2-n}.
\end{align}
Then we have
\begin{align}
        \frac{w_{j,2}(x)}{w_{j,2}(x_k)}\thickapprox&\ \frac{(\lambda^{-1/2}+\lambda^{1/2}|Y_k-\tilde{z}_j|^2)^{n-4}}{(\lambda^{-1/2}+\lambda^{1/2}|y_1-\tilde{z}_j|)^{n-4}}\lesssim
        \frac{1}{|y_1-\tilde{z}_j|^{n-4}}, \label{C11-phi}\\
       \frac{v_{j,2}(x)}{\lambda_1^2w_{j,2}(x_k)}\thickapprox&\
         \lambda R_{1j}^{-2} \frac{(\lambda^{-1/2}+\lambda^{1/2}|Y_k-\tilde{z}_j|)^{n-4}}{(\lambda^{-1/2}+\lambda^{1/2}|y_1-\tilde{z}_j|)^{n-2}}\nonumber\\
         \lesssim&\  M^{n-2}R_{1j}^{-2}|\tilde{z}_j|^{-2}\leq M^{n-2}R_{1j}^{-2}.\label{C11-h}
\end{align}
\emph{The case $j\in C^{-}_{1}$.}  Let $\lambda=\lambda_j/\lambda_1$, then $R_{1j}=\sqrt{\lambda}|\tilde{z}_j|\leq (1+\theta_j)|\tilde{z}_j|$ with $\theta_j=\lim_{k\to \infty}\lambda^{(k)}_j/\lambda^{(k)}_1$. It  holds that $|y_1|\leq  M\leq |\tilde{z}_j|/2$ for $k$ large. Then $|y_1-\tilde{z}_j|\geq |\tilde{z}_j|/2$,
\begin{align}\label{C12-U}
   U_j(x) \thickapprox\frac{\lambda_1^{\frac{n-2}{2}}}{(\lambda^{-1/2}+\lambda^{1/2}|y_1-\tilde{z}_j|)^{n-2}}
   \lesssim  \lambda_1^{\frac{n-2}{2}}R_{1j}^{2-n}.
\end{align}
 Furthermore, we have
\begin{align}
        \frac{w_{j,2}(x)}{w_{j,2}(x_k)}\thickapprox&\ \frac{(\lambda^{-1/2}+\lambda^{1/2}|Y_k-\tilde{z}_j|^2)^{n-4}}{(\lambda^{-1/2}+\lambda^{1/2}|y_1-\tilde{z}_j|)^{n-4}}\lesssim
        R_{1j}^{4-n}, \label{C12-phi}\\
       \frac{v_{j,2}(x)}{\lambda_1^2w_{j,2}(x_k)}\thickapprox&\
         \lambda R_{1j}^{-2} \frac{(\lambda^{-1/2}+\lambda^{1/2}|Y_k-\tilde{z}_j|)^{n-4}}{(\lambda^{-1/2}+\lambda^{1/2}|y_1-\tilde{z}_j|)^{n-2}}\lesssim R_{1j}^{-n}.\label{C12-h}
\end{align}
It is easy to see that
\begin{align}\label{U1-phi1-h1}
  U_1(x) = U(y_1),\quad
        \frac{w_{1,1}(x)}{w_{1,1}(x_k)}\thickapprox&\ \frac{\langle Y_k\rangle^2}{\langle y_1\rangle^2}\lesssim
        1, \quad
       \frac{v_{1,1}(x)}{\lambda_1^2w_{1,1}(x_k)}\thickapprox
         \frac{\langle Y_k\rangle^2}{\langle y_1\rangle^4}\lesssim 1.
\end{align}
From \eqref{T11-U}, \eqref{T12-U}, \eqref{C11-U}, \eqref{C12-U} and \eqref{U1-phi1-h1}, we get
\begin{align}
    \label{sigmak-lim}
    \lambda_1^{-2}\tilde{\sigma}_k^{p-1}(y_1)=U[0,1](y_1)+o(1)\to U[0,1](y_1).
\end{align}
From \eqref{T11-U}, \eqref{T12-U}, \eqref{C11-U}, \eqref{C12-U} and \eqref{U1-phi1-h1}, we get
\begin{align}\label{hk-lim}
    \begin{split}
        |\tilde{h}_k|\leq&\ \|h_k\|_{**}\frac{V(x)}{\lambda_1^2W(x_k)}
        \lesssim
       M^{n-2}\|h_k\|_{**}\to 0.
       \end{split}\end{align}
From \eqref{T11-phi}, \eqref{T12-phi}, \eqref{C11-phi}, \eqref{C12-phi} and \eqref{U1-phi1-h1}, we see that the singularities  only happen for $j\in (T^{+}_{1}\cup C^{+}_{1})\cap I_{1,1}$ with
\begin{align}\label{phik-lim}
    |\tilde{\phi}_k|(y_1)\lesssim |y_1-\tilde{z}_j|^{4-n}.
\end{align}
\end{proof}
\begin{proof}[Proof of the Claim \ref{clm:out}] Here and after, we always assume $k$ is large enough. WLOG, we assume $i=1$ and $j=2$. It is easy to see the claim holds when $m=l$,
\begin{align}
    \sum_{l=1}^2U_i^{p-1}(w_{l,1}+w_{l,2})\leq
    \epsilon_1\sum_{l=1}^2(v_{l,1}+v_{l,2}).
\end{align}
It reduces to consider the case $m\neq l$. We divide it to the following two main cases.

\noindent
(1) \emph{The case $\lambda_1\geq\lambda_2$ and $R_{12}=\sqrt{\lambda_1/\lambda_2}$.} Let $\xi_1=\lambda_2(z_1-z_2)$ and $\xi_2=\lambda_1(z_2-z_1)$. It holds that $|\xi_1|\leq1$ and $|\xi_2|\leq R_{12}^2$. Since $y_1=R_{12}^2(y_2-\xi_1)$, then $\langle y_1\rangle\geq \frac12R_{12}^2\langle y_2\rangle $ on the set $\{|y_1|\geq L,|y_2|\geq L\}$. We have
\begin{align}
\begin{split}
   U_1^{p-1}(w_{2,1}+w_{2,2})=&\ \frac{\lambda_1^2}{\langle y_1\rangle^4}(w_{2,1}+w_{2,2})
    \leq\ 16\lambda_2^2\langle y_2\rangle^{-4}(w_{2,1}+w_{2,2})\\
    \leq&\ 16\langle y_2\rangle^{-2}(v_{2,1}+v_{2,2})\leq\epsilon_1(v_{2,1}+v_{2,2}).
\end{split}
\end{align}
 Observe that $|y_1-\xi_2|=R_{12}^2|y_2|\geq LR_{12}^2$, then $|y_1|\geq (L-1)R_{12}^2$.  In the region $\{|y_1|\geq L, |y_2|\geq L\}$, we get
\begin{align}
\begin{split}
   U_2^{p-1}(w_{1,1}+w_{1,2})=&\ \frac{\lambda_2^2}{\langle y_2\rangle^4}\frac{\lambda_1^{\frac{n-2}{2}}R^{-4}}{\langle y_1\rangle^{n-4}}\chi_{\{|y_1|\geq(L-1)R_{12}^2\}}.
\end{split}
\end{align}
First, we bound the above term on the set $\{L\leq |y_2|\leq LR_{12}, |y_1|\geq (L-1)R_{12}^2\}$, that is
\begin{align}
\begin{split}
   \frac{\lambda_2^2}{\langle y_2\rangle^4}\frac{\lambda_1^{\frac{n-2}{2}}R^{-4}}{\langle y_1\rangle^{n-4}}=&\
    \frac{\lambda_2^{\frac{n+2}{2}}R^{-4}}{\langle y_2\rangle^4}\frac{R_{12}^{n-2}}{\langle y_1\rangle^{n-4}}
    \leq\ (L-1)^{4-n}\frac{\lambda_2^{\frac{n+2}{2}}R^{-4}}{\langle y_2\rangle^4}\frac{1}{R_{12}^{n-6}}\\
    \leq&\ \frac{L^{n-6}}{(L-1)^{n-4}}\left(\frac{\lambda_2^{\frac{n+2}{2}}R^{2-n}}{\langle y_2\rangle^{4}}\chi_{\{|y_2|\leq R\}}+\frac{\lambda_2^{\frac{n+2}{2}}R^{-4}}{\langle y_2\rangle^{n-2}}\chi_{\{R\leq |y_2|\leq LR_{12}\}}\right)\\
    \leq &\ \epsilon_1 (v_{2,1}+v_{2,2}).
\end{split}
\end{align}
Second, on $\{|y_2|\geq LR_{12}, |y_1|\geq (L-1)R_{12}^2\}$, we have $\langle y_1\rangle=\langle R_{12}^2(y_2-\xi_1)\rangle\geq \frac12R_{12}^2\langle y_2\rangle$,
\begin{align}
\begin{split}
   \frac{\lambda_2^2}{\langle y_2\rangle^4}\frac{\lambda_1^{\frac{n-2}{2}}R^{-4}}{\langle y_1\rangle^{n-4}}=&\
    \frac{\lambda_2^{\frac{n+2}{2}}R^{-4}}{\langle y_2\rangle^4}\frac{R_{12}^{n-2}}{\langle y_1\rangle^{n-4}}
    \leq\ v_{2,2}\frac{\langle y_2\rangle^{n-6}R_{12}^{n-2}}{\langle y_1\rangle^{n-4}}
    \leq \epsilon_1v_{2,2}.
\end{split}
\end{align}

(2) \emph{The case $\lambda_1\geq\lambda_2$ and $R_{12}=\sqrt{\lambda_1\lambda_2}|z_1-z_2|$.} Notice that $|\xi_2|=\sqrt{\l_1/\l_2}R_{12}$ and $y_2=\l_2\l_1^{-1}(y_1-\xi_2)$. Then on the set $\{L\leq |y_1|\leq \frac12 \sqrt{\l_1/\l_2}R_{12}\}$, we have $\langle y_2\rangle\geq \frac12\sqrt{\l_2/\l_1}R_{12}$. Consequently
\begin{align}\label{U2w11}
    U_2^{p-1}w_{1,1}=\frac{\l_2^2}{\langle y_2\rangle^4}
    \frac{\l_1^{\frac{n-2}{2}}R^{2-n}}{\langle y_1\rangle^2}\chi_{\{L\leq |y_1|\leq R\}}\leq 16\l_1^2R_{12}^{-4}\frac{\l_1^{\frac{n-2}{2}}R^{2-n}}{\langle y_1\rangle^2}\leq \epsilon_1 v_{1,1}.
\end{align}
Next consider $U_2^{p-1}w_{1,2}$. To bound it, we divide it into two cases. First,  suppose $\l_2|z_1-z_2|\geq {L}^{\frac14}$, then
on the set $\{R<|y_1|<\frac12\sqrt{\l_1/\l_2}R_{12}\}$
\begin{align}
    U_2^{p-1}w_{1,2}\leq 16\l_1^2R_{12}^{-4}\frac{\l_1^{\frac{n-2}{2}}R^{-4}}{\langle y_1\rangle^{n-4}}=16v_{1,2}\frac{\langle y_1\rangle^2}{R_{12}^4}<\frac{16}{\l_2^2|z_1-z_2|^2}v_{1,2}\leq \frac{16}{\sqrt{L}}v_{1,2}.
\end{align}
On the set $\{|y_1|>\frac12\sqrt{\l_1/\l_2}R_{12}\}$, then
\begin{align*}
    U_2^{p-1}w_{1,2}=\frac{\l_2^2}{\langle y_2\rangle^4}\frac{\l_1^{\frac{n-2}{2}}R^{-4}}{\langle y_1\rangle^{n-4}}=v_{2,1}\frac{(\l_1/\l_2)^{\frac{n-2}{2}}R^{n-6}}{\langle y_1\rangle^{n-4}}<\frac{2^{n-4}}{\l_2^2|z_1-z_2|^2} v_{2,1}<\frac{2^{n-4}}{\sqrt{L}}v_{2,1}.
\end{align*}
Second, suppose $|\xi_1|=\l_2|z_1-z_2|\leq L^{\frac14}$. On the set $\{L\leq |y_2|\leq R\}$, since $y_1=\l_1\l_2^{-1}(y_2-\xi_1)$, then $|y_1|\geq \frac{\l_1}{\l_2}\frac{L}{2}\geq \frac{\sqrt{L}}{2}R_{12}^2$ and
\[U_2^{p-1}w_{1,2}=\frac{\l_2^2}{\langle y_2\rangle^4}\frac{\l_1^{\frac{n-2}{2}}R^{-4}}{\langle y_1\rangle^{n-4}}\leq v_{2,1}\frac{(\l_1/\l_2)^{\frac{n-2}{2}}R^{n-6}}{(\frac12\sqrt{L}R_{12}^2)^{n-4}}\leq (\frac{1}{2}\sqrt{L})^{4-n}v_{2,1}\leq \epsilon_1v_{2,1}.\]
On the set $\{|y_2|\geq R\}$,
\begin{align*}
    U_2^{p-1}w_{1,2}=v_{2,2}\frac{(\frac{\l_1}{\l_2})^{\frac{n-2}{2}}\langle y_2\rangle^{n-6}}{\langle y_1\rangle^{n-4}}\leq v_{2,2}\frac{(\frac{\l_1}{\l_2})^{\frac{n-2}{2}}\langle y_2\rangle^{n-6}}{\langle \frac12\frac{\l_1}{\l_2}y_2\rangle^{n-4}}\leq v_{2,2}\frac{2^{n-4}(\frac{\l_1}{\l_2})^{\frac{6-n}{2}}}{\langle y_2\rangle^2}\leq \epsilon_1 v_{2,2}.
\end{align*}

Let us consider the term $U_1^{p-1}w_{2,1}$.
On the set $ \{L<|y_2|<R, |y_1|<R\}$, one has $\langle y_2\rangle\geq \frac12\sqrt{\l_2/\l_1}R_{12}$ and
\[U_1^{p-1}w_{2,1}=\frac{\l_1^2}{\langle y_1\rangle^4}\frac{\l_2^{\frac{n-2}{2}}R^{2-n}}{\langle y_2\rangle^2}\leq\frac{\l_1^2}{\langle y_1\rangle^4}\frac{\l_2^{\frac{n-2}{2}}R^{2-n}}{(\frac12\sqrt{\l_2/\l_1}R_{12})^{2}}=4(\frac{\l_2}{\l_1})^{\frac{n-4}{2}}R_{12}^{-2}v_{1,1}\leq\epsilon v_{1,1}.\]
On the set $\{L<|y_2|<R, R<|y_1|\}\cap\{L(\l_2/\l_1)^2v_{1,2}\geq v_{2,1}\}$, one has
\begin{align}\label{121-1}
    U_1^{p-1}w_{2,1}=\frac{\l_1^2}{\langle y_1\rangle^4}\frac{\l_2^{\frac{n-2}{2}}R^{2-n}}{\langle y_2\rangle^2}=v_{2,1}\frac{\langle y_2\rangle^2}{\langle y_1\rangle^4}(\frac{\l_1}{\l_2})^2\leq \frac{L\langle y_2\rangle^2}{\langle y_1\rangle^4}v_{1,2}\leq LR^{-2}v_{1,2}.
\end{align}
On the set $\{L<|y_2|<R, R<|y_1|\}\cap\{L(\l_2/\l_1)^2v_{1,2}\leq v_{2,1}\}$, one has $\langle y_1\rangle^{n-2}\geq L(\l_1/\l_2)^{\frac{n-2}{2}}R^{n-6}\langle y_2\rangle^4$, then
\begin{align}\label{121-2}
    U_1^{p-1}w_{2,1}=v_{2,1}\frac{\langle y_2\rangle^2}{\langle y_1\rangle^4}\left(\frac{\l_1}{\l_2}\right)^2\leq v_{2,1}L^{-\frac{4}{n-2}}R^{-\frac{4(n-6)}{n-2}}\langle y_2\rangle^{\frac{2n-20}{n-2}}\leq \epsilon_1 v_{2,1}
\end{align}
here we have use $ L\leq|y_2|\leq R$.


It remains to consider the term $U_1^{p-1}w_{2,2}$. First, on the set $\{|y_2|\geq R, |y_1|\leq R\}$, similar to \eqref{U2w11}, one has
\begin{align}
    U_1^{p-1}w_{2,2}=\frac{\l_1^2}{\langle y_1\rangle^4}\frac{\l_2^{\frac{n-2}{2}}R^{-4}}{\langle y_2\rangle^{n-4}}\leq \frac{\l_1^2}{\langle y_1\rangle^4}\frac{\l_2^{\frac{n-2}{2}}R^{2-n}}{\langle y_2\rangle^{2}}\leq \epsilon_1 v_{1,1}
\end{align}

Second, on the set $\{|y_2|\geq R ,|y_1|\geq R\}\cap \{v_{1,2}\geq v_{2,2}\}$, one has $\langle y_1\rangle/\langle y_2\rangle\leq (\frac{\l_1}{\l_2})^{\frac{n+2}{2(n-2)}}$,
\begin{align*}
    U_1^{p-1}w_{2,2}=\frac{\l_1^2}{\langle y_1\rangle^4}\frac{\l_2^{\frac{n-2}{2}}R^{-4}}{\langle y_2\rangle^{n-4}}\leq v_{1,2}\frac{\langle y_1\rangle^{n-6}}{\langle y_2\rangle^{n-4}}\left(\frac{\l_2}{\l_1}\right)^{\frac{n-2}{2}}\leq v_{1,2}\frac{1}{\langle y_2\rangle^2}\left(\frac{\l_2}{\l_1}\right)^{\frac{8}{n-2}}\leq \epsilon_1 v_{1,2}.
\end{align*}
On the set $\{|y_2|\geq R ,|y_1|\geq R\}\cap \{v_{1,2}\leq v_{2,2}\}$, one has $\langle y_1\rangle\geq \langle y_2\rangle (\l_1/\l_2)^{\frac{n+2}{2(n-2)}}$
\[U_1^{p-1}w_{2,2}=\frac{\l_1^2}{\langle y_1\rangle^4}\frac{\l_2^{\frac{n-2}{2}}R^{-4}}{\langle y_2\rangle^{n-4}}=v_{2,2}\left(\frac{\l_1}{\l_2}\right)^2\frac{\langle y_2\rangle^2}{\langle y_1\rangle^4}\leq v_{2,2}\frac{(\l_2/\l_1)^{\frac{8}{n-2}}}{\langle y_2\rangle^2}\leq \epsilon_1 v_{2,2}.\]

\end{proof}
\begin{proof}
[Proof of the Claim \ref{clm:1-in}]
WLOG we assume $i=1$. By the definition \eqref{i-4index}, we have $\lambda_j\gg \lambda_1$. Let $\lambda=\lambda_j/\lambda_1$, $\xi_1=\lambda_j(z_1-z_j)$ and $\xi_j=\tilde{z}_j=\lambda_1(z_j-z_1)$.
 Observe that, if $j\in C^{+}_{1}$, $|y_j|\leq R\ll \sqrt{\lambda}R_{1j}= |\xi_1|$, then $|y_j-\xi_1|\geq|\xi_1|/2$. Thus, on the set $\{
 L\leq |y_j|\leq R\}$, no matter $j\in C_1^+$ or $T_1^+$,
 \begin{align}\label{t11-c11}
     \l\langle y_1\rangle^2=\l+\l^{-1}|y_j-\xi_1|^2\geq \frac12 R_{1j}^2,\quad j\in T^{+}_1\cup C^{+}_1.
 \end{align}
It follows from \eqref{t11-c11} that, on the set $\{
 L\leq |y_j|\leq R\}$,
\begin{align}\label{claim-2-1}
    \begin{split}
        U_1^{p-1}w_{j,1}=&\ \frac{\lambda_1^2}{\langle y_1\rangle^4}\frac{\lambda_j^{\frac{n-2}{2}}R^{2-n}}{\langle y_j\rangle^{2}}
        \leq\ 16 R_{1j}^{-4}\frac{\lambda_j^{\frac{n+2}{2}}R^{2-n}}{\langle y_j\rangle^{2}}\leq \epsilon_1v_{j,1}.
    \end{split}
\end{align}
Moreover, on the set $\{|y_j|\geq R, |y_1-\xi_j|\leq \epsilon\}$, it holds that
$\langle y_j\rangle^2=1+\l^2|y_1-\xi_j|^2\leq 2\l^2\epsilon^2$ and
\begin{align}\label{claim-2-2}
    \begin{split}
        U_1^{p-1}w_{j,2}=\ \frac{\lambda_1^2}{\langle y_1\rangle^4}\frac{\lambda_j^{\frac{n-2}{2}}R^{-4}}{\langle y_j\rangle^{n-4}}=\frac{\langle y_j\rangle^2}{\l^2\langle y_1\rangle^4}v_{j,2}\leq 2\epsilon^2 v_{j,2}\leq \epsilon_1 v_{j,2}.
    \end{split}
\end{align}
Together with \eqref{claim-2-1} and \eqref{claim-2-2}, we prove the \eqref{clm3-1}.

If $L\leq |y_j|\leq R$, it is easy to see that
\begin{align}\label{claim-2-3}
    \begin{split}
        U_j^{p-1}w_{1,1}=\frac{\lambda_j^{2}}{\langle y_j\rangle^4}\frac{\lambda_1^{\frac{n-2}{2}}R^{2-n}}{\langle y_1\rangle^2}=\ \frac{\lambda^{\frac{2-n}{2}}}{\langle y_1\rangle^2}v_{j,1}\leq \epsilon_1v_{j,1}.
    \end{split}
\end{align}
For $|y_j|>R$, one can use the same type of estimates \eqref{121-1} and \eqref{121-2} to prove
\[U_j^{p-1}w_{1,1}\leq \epsilon_1(v_{j,2}+v_{1,1}).\]
\end{proof}
Combining the above claims, we complete the proof.
\end{proof}

Next, we estimate the coefficients $c_b^j$ in \eqref{w2.3}.
\begin{lemma}\label{lem:c_jb}
Suppose $\sigma$ is the sum of a family of $\delta$-interacting bubbles. If $\phi$, $h$ and $c_b^j$ satisfy \eqref{w2.3}, then
\[|c_{b}^j|\lesssim Q\|h\|_{**}+Q^p\|\phi\|_{*},\quad 1\leq j\leq \nu,\ 1\leq b\leq n+1.\]
\end{lemma}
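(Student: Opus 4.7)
The plan is a standard test-function argument combined with matrix inversion, but calibrated against the weighted norms $\|\cdot\|_*$ and $\|\cdot\|_{**}$.

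First I would test the equation \eqref{w2.3} against $Z_j^b$. Since $Z_j^b$ satisfies $\Delta Z_j^b + p U_j^{p-1} Z_j^b = 0$, two integrations by parts and the orthogonality $\int U_j^{p-1}\phi Z_j^b=0$ give the key identity
\[
p\int (\sigma^{p-1}-U_j^{p-1})\,\phi\, Z_j^b \;=\; \int h\, Z_j^b \;+\; \sum_{i,a} c_a^i \int U_i^{p-1} Z_i^a Z_j^b.
\]

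Next I would analyze the linear system on the right. When $i=j$, the orthogonality of Jacobi fields gives $\int U_j^{p-1} Z_j^a Z_j^b = \kappa_b\,\delta_{ab}$ for some constant $\kappa_b>0$ depending only on $n$. When $i\ne j$, $|Z_i^a|,|Z_j^b|\lesssim U_i,U_j$, so the integral estimates in the appendix (Lemma \ref{lem:2int-est}) yield $\big|\int U_i^{p-1} Z_i^a Z_j^b\big|\lesssim \int U_i^p U_j \lesssim q_{ij}\lesssim Q$. Hence the coefficient matrix is $\mathrm{Diag}(\kappa_b)+O(Q)$, and for $\delta$ small it is invertible with a uniformly bounded inverse. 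Thus once the right-hand side is estimated, one can solve for the $c_a^i$.

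Now for the right-hand side. For the $h$ term I would use the pointwise bound $|h Z_j^b|\le \|h\|_{**}\, V\, U_j$ and check that $\int V U_j \lesssim Q$: this is a direct region-by-region computation, since on each piece of $V$ (inner piece $v_{k,1}$ and outer piece $v_{k,2}$) the factor $R^{2-n}$ or $R^{-4}$ is already of order $Q$ and the remaining $\langle y_k\rangle$-weights integrate against $U_j$ to a bounded constant. For the left-hand side I would use that $p-1=\frac{4}{n-2}\le 1$ for $n\ge 6$, which by subadditivity yields
\[
|\sigma^{p-1}-U_j^{p-1}| \;\le\; \Big(\sum_{i\neq j} U_i\Big)^{p-1} \;\le\; \sum_{i\neq j} U_i^{p-1},
\]
and then $|\phi|\le \|\phi\|_* W$, $|Z_j^b|\lesssim U_j$ give
\[
\Big|\int (\sigma^{p-1}-U_j^{p-1})\,\phi\, Z_j^b\Big| \;\lesssim\; \|\phi\|_*\sum_{i\neq j}\int U_i^{p-1} U_j\, W.
\]
The claim is $\sum_{i\neq j}\int U_i^{p-1} U_j\, W \lesssim Q^p$. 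I would prove it by splitting $W=\sum_k (w_{k,1}+w_{k,2})$ and localizing according to the core/outer regions used in Proposition \ref{prop:on-weight}; in each region one of $U_i^{p-1}$ or $U_j$ is dominated by the other, the explicit $R^{2-n}$ prefactor in $W$ contributes one power of $Q$, and the remaining integral $\int U_i^{p-1} U_j \langle y_k\rangle^{-\gamma}$ is controlled by $q_{ij}^{\gamma}$-type bounds from the appendix, the net effect being $Q\cdot Q^{p-1}=Q^p$.

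Putting these pieces together and inverting the matrix yields $|c_b^j|\lesssim Q\|h\|_{**}+Q^p\|\phi\|_*$. The main obstacle is the last weighted integral estimate $\int U_i^{p-1}U_j W\lesssim Q^p$: the weight $W$ is anisotropic and $R^{2-n}\sim Q$ only on the inner part, so one must track carefully how the outer piece $w_{k,2}$ (with prefactor $R^{-4}$) contributes, using the region decomposition and pairwise bubble interaction estimates exactly as in Proposition \ref{prop:on-weight}.
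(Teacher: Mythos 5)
Your overall scaffolding matches the paper: test \eqref{w2.3} against $Z_j^b$, observe that the coefficient matrix is $\mathrm{Diag}(\gamma^b)+O(Q)$ by Lemma~\ref{lem:kernel} and invert it, and bound $\big|\int h\,Z_j^b\big|\le\|h\|_{**}\int V\,U_j\lesssim Q\|h\|_{**}$ as in Lemma~\ref{lem:V-U}. Those pieces are fine.

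The gap is in your treatment of the term $\int p\sigma^{p-1}\phi Z_j^b$. After using orthogonality to replace $\sigma^{p-1}$ by $\sigma^{p-1}-U_j^{p-1}$, you invoke subadditivity $\sigma^{p-1}-U_j^{p-1}\le\sum_{i\ne j}U_i^{p-1}$ and then claim $\sum_{i\ne j}\int U_i^{p-1}U_j\,W\lesssim Q^p$. That claim fails in the bubble-tower case. Take two bubbles with $\lambda_1\gg\lambda_2$, $R_{12}=\sqrt{\lambda_1/\lambda_2}=2R$, and look at $j=1$, $i=2$ in the core $\{|y_1|\le R\}$. There $U_2\approx\lambda_1^{(n-2)/2}R_{12}^{2-n}$ is essentially constant, so
\[
U_2^{p-1}U_1\,w_{1,1}\approx \lambda_1^n\,R_{12}^{-4}R^{2-n}\,\langle y_1\rangle^{-n},
\qquad
\int_{|y_1|\le R}U_2^{p-1}U_1\,w_{1,1}\,dx\approx R_{12}^{-4}R^{2-n}\log R\approx Q^{p}|\log Q|.
\]
The overshoot is structural: when $\lambda_1\gg\lambda_2$ and $p<2$, the quantity $U_2^{p-1}U_1$ dominates $U_1^{p-1}U_2$, hence is strictly larger than the true interaction $\sigma^{p}-\sum U_i^{p}$ in this region, and it decays like $\langle y_1\rangle^{2-n}$ rather than like $\langle y_1\rangle^{-4}$ as the inner weight $v_{1,1}$ does. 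So the subadditivity step throws away exactly the cancellation that makes the $Q^p$ (rather than $Q^p|\log Q|$) bound possible.

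The paper sidesteps this by a sharper pointwise step: since $(\sigma^{p-1}-U_i^{p-1})U_i\ge 0$ for every $i$, one has $(\sigma^{p-1}-U_j^{p-1})U_j\le\sum_i(\sigma^{p-1}-U_i^{p-1})U_i=\sigma^p-\sum_iU_i^p$, which by Proposition~\ref{prop:on-weight} is $\lesssim V$, and then $\int VW\lesssim Q^p$ follows from Lemma~\ref{lem:s-U-rho}. That is, one should bound $(\sigma^{p-1}-U_j^{p-1})|Z_j^b|\lesssim(\sigma^{p-1}-U_j^{p-1})U_j$ and then use this $\sigma^p-\sum U_i^p$ trick, not split $\sigma^{p-1}-U_j^{p-1}$ bubble by bubble. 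Replacing your subadditivity step with this observation repairs the proof.
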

\begin{proof}
We shall multiply \eqref{w2.3} by $Z_j^b$.
By the Lemma \ref{lem:kernel}, for $a,b\leq n+1$, there exist some constants $\gamma^b$ such that
\[\sum_{i,a}\int c_{a}^{i}U_i^{p-1}Z_{i}^aZ_{j}^b=c_{b}^j\gamma^b+\sum_{i\neq j}\sum_{a=1}^{n+1}c_{a}^i O(q_{ij}).\]
Fix $1\leq j\leq \nu$, $1\leq b \leq n+1$. By the orthogonal condition in \eqref{w2.3} and $|Z_j^b|\lesssim U_j$
\begin{align}
\begin{split}\label{sigma(p-1)-z-phi}
    \left|\int p\sigma^{p-1}\phi Z_j^{b}\right|=&\
        \left|\int p\left(\sigma^{p-1}-U_j^{p-1}\right) Z_j^{b}\phi\right|
        \\
        \lesssim&\ \|\phi\|_{*}\int\left(\sigma^{p-1}- U_j^{p-1}\right)U_jWdx.
    \end{split}
\end{align}
Now we use the fact that, $(\sigma^{p-1}-U_i^{p-1})U_i\geq0$ for each $i$,
\[(\sigma^{p-1}-U_j^{p-1})U_j\leq \sum_i(\sigma^{p-1}-U_i^{p-1})U_i=\sigma^p-\sum_iU_i^p.\]
Then by Proposition \eqref{prop:on-weight},\eqref{sigma(p-1)-z-phi} can be bounded by
\begin{align*}
    \begin{split}
        \left|\int p\sigma^{p-1}\phi Z_j^{b}\right|
        \lesssim&\ \|\phi\|_{*}\int(\sigma^p-\sum_{i=1}^\nu U_i^p)Wdx\lesssim \|\phi\|_{*}\int V(x)W(x)dx \lesssim\ Q^p\|\phi\|_*.
    \end{split}
\end{align*}
Here we have used Lemma \ref{lem:s-U-rho} in the last inequality (see similar estimates \eqref{intVW}) and \eqref{rho0-L2-5}).
Similarly, by Lemma \ref{lem:V-U}  we have
\begin{align}
\begin{split}
   \left| \int h Z_{j}^b\right|\lesssim&\ \|h\|_{**}\sum_{i=1}^\nu \int VU_jdx \lesssim\  Q\|h\|_{**}.
\end{split}
\end{align}
Multiplying \eqref{w2.3} by $Z_j^b$ and  integrating, we see that $\{c_b^j\}$ satisfy the linear system
\begin{align*}
    c_b^j\gamma^b+\sum\limits_{i\neq j}\sum\limits_{a=1}^{n+1}c_a^iO(q_{ij})=\int p\sigma^{p-1}\phi Z_j^{b}+\int h Z_{j}^b,\quad &1\leq j\leq \nu,\ 1\leq b\leq n+1.
\end{align*}
Since $q_{ij}\leq Q\leq \delta$ the system is solvable and we prove the claim.
\end{proof}
From Lemma \ref{lem:prior-est} and \ref{lem:c_jb}, using the same argument as in the proof of Proposition 4.1 in \cite{delPino2003}, we can prove the following result:
\begin{proposition}\label{prop:existence}
There exists positive $\delta_{0}$ and a constant $C>0$, independent of $\delta$, such that for all $\delta \leqslant \delta_{0}$ and all $h$ with $\|h\|_{**}<\infty$, problem \eqref{w2.3} has a unique solution $\phi \equiv L_{\delta}(h) .$ Besides,
\[
\left\|L_{\delta}(h)\right\|_{*} \leqslant C\|h\|_{* *}, \quad\left|c_{a}^i\right| \leqslant C\delta\|h\|_{* *} .
\]
\end{proposition}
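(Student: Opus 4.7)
The plan is to follow the standard finite-dimensional reduction scheme, splitting the argument into (i) a functional-analytic existence step via the Fredholm alternative, (ii) promoting the Hilbert-space solution to the weighted $L^{\infty}$ bound using Lemma \ref{lem:prior-est}, and (iii) reading off the size of the multipliers $c_a^i$ from Lemma \ref{lem:c_jb}.

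First I would recast \eqref{w2.3} as an operator equation on the Hilbert space
\[
E:=\Bigl\{\phi\in \dot H^{1}(\mathbb{R}^{n})\;:\;\int U_i^{p-1}Z_i^{a}\phi=0,\ 1\le i\le\nu,\ 1\le a\le n+1\Bigr\},
\]
endowed with the inner product inherited from $\dot H^{1}$. Let $\Pi_E:\dot H^{1}\to E$ be the orthogonal projection. Using the Riesz representation, one rewrites \eqref{w2.3} as
\[
\phi-T_\delta\phi=\Pi_E(-\Delta)^{-1}h,\qquad \phi\in E,
\]
where $T_\delta\phi:=\Pi_E(-\Delta)^{-1}(p\sigma^{p-1}\phi)$. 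The next step is to check that $T_\delta$ is a compact, self-adjoint operator on $E$. Compactness follows from the fact that $\sigma^{p-1}\in L^{n/2}(\mathbb{R}^n)$ together with the continuity of the Riesz potential $(-\Delta)^{-1}:L^{2n/(n+2)}\to L^{2^*}$ and a standard truncation/Rellich argument (the tails of $\sigma^{p-1}$ can be made arbitrarily small in $L^{n/2}$, reducing compactness to a compact domain).

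With compactness in hand, the Fredholm alternative reduces existence and uniqueness to verifying that the only solution of the homogeneous problem ($h\equiv 0$) is $\phi\equiv 0$. Here is where I invoke the a priori estimate. Given $\phi\in E$ solving the homogeneous problem in $\dot H^{1}$, elliptic regularity applied to $\Delta\phi=-p\sigma^{p-1}\phi-\sum c_a^i U_i^{p-1}Z_i^a$ (the right-hand side is locally bounded and decays at infinity like $\langle x\rangle^{-4}W$, which is integrable in the relevant Marcinkiewicz class for $n\ge 6$) yields $\phi\in L^{\infty}_{\mathrm{loc}}$ and the decay $|\phi(x)|\lesssim \langle x\rangle^{2-n}$, so in particular $\|\phi\|_{*}<\infty$. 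Lemma \ref{lem:prior-est} then forces $\phi\equiv 0$, establishing uniqueness and, via Fredholm, existence of a unique $L_\delta(h)\in E$ for every $h$ with $\|h\|_{**}<\infty$.

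Once existence is secured, the bound $\|L_\delta(h)\|_{*}\le C\|h\|_{**}$ is not yet automatic since Lemma \ref{lem:prior-est} presupposes finiteness of the $\|\cdot\|_{*}$-norm. I would therefore approximate: solve \eqref{w2.3} with $h$ replaced by $h\chi_{B_R}$ (which is compactly supported, hence the bootstrapped solution lies trivially in the weighted space), apply Lemma \ref{lem:prior-est} to obtain $\|\phi_R\|_{*}\le C\|h\chi_{B_R}\|_{**}\le C\|h\|_{**}$ uniformly in $R$, and pass to the limit $R\to\infty$ by weak compactness in $\dot H^{1}$, using the uniqueness from Fredholm to identify the limit with $L_\delta(h)$. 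Finally, with $\|L_\delta(h)\|_{*}\le C\|h\|_{**}$ in hand, Lemma \ref{lem:c_jb} gives
\[
|c_a^i|\lesssim Q\|h\|_{**}+Q^{p}\|L_\delta(h)\|_{*}\lesssim (Q+Q^{p})\|h\|_{**}\le C\delta\|h\|_{**},
\]
since $Q\le\delta$ and $p\ge 1$.

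The main obstacle is the matching between the weighted $L^{\infty}$-framework in which Lemma \ref{lem:prior-est} is formulated and the Hilbert-space framework where the Fredholm alternative is natural; bridging the two requires both the elliptic bootstrap to show any $\dot H^{1}$-solution lies in $\|\cdot\|_{*}$, and the approximation-by-truncation argument so that Lemma \ref{lem:prior-est} can be applied legitimately to each approximation. The compactness of $T_\delta$ and the self-adjointness, while standard, must also be checked carefully because the weight $\sigma^{p-1}$ is critical for the Sobolev embedding in dimension $n$.
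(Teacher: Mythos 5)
Your overall strategy — reformulating \eqref{w2.3} as a Fredholm equation on the orthogonal complement $E$, proving uniqueness of the homogeneous problem via the a priori estimate, and then upgrading to the weighted bound by truncation and passage to the limit — is the standard reduction argument and is what the paper tacitly invokes by citing Proposition 4.1 of del Pino--Felmer--Musso. However, there is a genuine gap in the way you handle the Lagrange multipliers $c_a^i$, and it recurs in two places.

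In the uniqueness step you write ``Lemma \ref{lem:prior-est} then forces $\phi\equiv 0$'', but Lemma \ref{lem:prior-est} applies to \eqref{w2.4}, which has no $c_a^i$ on the right-hand side. The homogeneous Fredholm problem produces $\phi\in E$ satisfying $\Delta\phi+p\sigma^{p-1}\phi=\sum_{i,a}c_a^i U_i^{p-1}Z_i^a$, and those multipliers are \emph{not} a priori zero, so you must feed $\sum c_a^i U_i^{p-1}Z_i^a$ into Lemma \ref{lem:prior-est} as the data. That term is far from small in $\|\cdot\|_{**}$: at the core of the $i$-th bubble $U_i^p/V\approx R^{n-2}\langle y_i\rangle^{-(n-2)}$, so $\bigl\|U_i^{p-1}Z_i^a\bigr\|_{**}\approx R^{n-2}\approx Q^{-1}$. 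To close the argument you need Lemma \ref{lem:c_jb} (with $h\equiv 0$), which gives $|c_a^i|\lesssim Q^{p}\|\phi\|_*$, whence $\bigl\|\sum c_a^i U_i^{p-1}Z_i^a\bigr\|_{**}\lesssim Q^{p-1}\|\phi\|_*$, and then Lemma \ref{lem:prior-est} yields $\|\phi\|_*\lesssim Q^{p-1}\|\phi\|_*$; only because $p>1$ and $Q\le\delta$ is small can this be absorbed to force $\phi\equiv0$. The same absorption is missing in the a-priori-estimate step: for the truncated problem the right-hand side of \eqref{w2.4} is $h\chi_{B_R}+\sum c_a^i U_i^{p-1}Z_i^a$, so Lemma \ref{lem:prior-est} gives $\|\phi_R\|_*\le C\|h\|_{**}+CQ^{p-1}\|\phi_R\|_*$ rather than $\|\phi_R\|_*\le C\|h\|_{**}$ directly. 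You invoke Lemma \ref{lem:c_jb} only at the very end to read off $|c_a^i|\le C\delta\|h\|_{**}$, but it is needed (together with the $\|U_i^{p-1}Z_i^a\|_{**}\lesssim Q^{-1}$ estimate) already in the uniqueness step and in the a priori estimate; as written your invocation of Lemma \ref{lem:prior-est} is outside its hypotheses.

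A secondary, smaller point: in passing to the limit $R\to\infty$ you rely on weak $\dot H^1$-compactness, but a uniform $\|\phi_R\|_*$-bound does not by itself control $\|\nabla\phi_R\|_{L^2}$ when $n\ge 6$ (the weight $W$ just fails to lie in $L^{2^*}$). Either pass to the limit via local elliptic estimates and Arzelà--Ascoli, or first derive a uniform $\dot H^1$-bound on $\phi_R$ by testing the equation against $\phi_R$ exactly as in the proof of Proposition \ref{prop:L2-rho0}.
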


\begin{proposition}\label{prop:rho0-exist}
Suppose that $\delta$ is small enough.  There exists a solution $\rho_0$ and a family of scalars $(c_a^i)$ which solve
\begin{align}\label{rho0-non}
    \Delta \phi+[(\sigma+\phi)^p-\sigma^p]+\sigma^p-\sum_{i=1}^\nu U_i^p=\sum_{i,a} c_a^iU_i^pZ_i^a
\end{align}
with
\begin{equation}\label{rho0-pointwise}
|\rho_0|(x)\leq CW(x).
\end{equation}
\end{proposition}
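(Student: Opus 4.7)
The plan is to set up \eqref{rho0-non} as a fixed point problem for the linear solution operator $L_\delta$ supplied by Proposition \ref{prop:existence}, and then solve it via the contraction mapping theorem in a ball of the $\|\cdot\|_*$-norm. Writing $N(\phi):=(\sigma+\phi)^p-\sigma^p-p\sigma^{p-1}\phi$ and $h_0:=-(\sigma^p-\sum_i U_i^p)$, equation \eqref{rho0-non} is equivalent to
\[
\Delta\phi+p\sigma^{p-1}\phi=h_0-N(\phi)+\sum_{i,a}c_a^i U_i^{p-1}Z_i^a,\qquad \int U_i^{p-1}Z_i^a\phi=0,
\]
i.e.\ $\phi=T(\phi):=L_\delta(h_0-N(\phi))$. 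By Proposition \ref{prop:on-weight} we have the uniform bound $\|h_0\|_{**}\leq C_0(n,\nu)$, and by Proposition \ref{prop:existence} the operator $L_\delta:\{\|\cdot\|_{**}<\infty\}\to\{\|\cdot\|_*<\infty\}$ is bounded with norm $\leq C_1$.

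The crucial point-wise input is that the natural ratio $W^p/V$ is small. A direct comparison using the definitions \eqref{define-V}--\eqref{define-W} (and the identity $(p-1)(n-2)=4$) gives, on the core region $\{|y_i|\leq R\}$,
\[
\frac{W^p}{V}\lesssim R^{(p-1)(2-n)}\langle y_i\rangle^{4-2p}\leq R^{-4}\langle y_i\rangle^{4-2p}\leq R^{-2p},
\]
and on the exterior region a similar computation yields $W^p/V\lesssim R^{-4(p-1)}\langle y_i\rangle^{-(2n-12)/(n-2)}\to 0$ (this is finite precisely because $n\geq 6$; the endpoint $n=6$ still gives the decaying factor $R^{-4}$). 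Consequently there exists $\alpha=\alpha(n)>0$ such that $\|W^p\|_{**}\leq C R^{-\alpha}$, which will be the smallness we exploit.

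Since $n\geq 6$ forces $p\in(1,2]$, the elementary inequality $|N(\phi)|\lesssim|\phi|^p$ holds, so
\[
\|N(\phi)\|_{**}\lesssim \|\phi\|_*^{\,p}\,\|W^p\|_{**}\leq C\|\phi\|_*^{\,p}R^{-\alpha}.
\]
In the ball $B_M:=\{\phi:\|\phi\|_*\leq M\}$ with $M:=2C_1C_0$, we get
\[
\|T(\phi)\|_*\leq C_1\bigl(C_0+CM^pR^{-\alpha}\bigr)\leq M
\]
once $R$ is large enough (equivalently $\delta$ small enough). For the contraction estimate, $p\leq 2$ gives the pointwise bound $|N(\phi_1)-N(\phi_2)|\lesssim (|\phi_1|^{p-1}+|\phi_2|^{p-1})|\phi_1-\phi_2|$, hence
\[
\|T(\phi_1)-T(\phi_2)\|_*\leq C_1\|N(\phi_1)-N(\phi_2)\|_{**}\leq CM^{p-1}R^{-\alpha}\|\phi_1-\phi_2\|_*,
\]
which is a strict contraction when $\delta$ is sufficiently small. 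Banach's theorem then produces a fixed point $\rho_0\in B_M$ solving \eqref{rho0-non} together with a family of scalars $(c_a^i)$ controlled by Lemma \ref{lem:c_jb}, and the bound $\|\rho_0\|_*\leq M$ is exactly the claimed pointwise estimate \eqref{rho0-pointwise}.

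The main obstacle is verifying the two weight inequalities $\|W^p\|_{**}\leq CR^{-\alpha}$ and the analogous bound needed to dominate $N(\phi_1)-N(\phi_2)$ in $\|\cdot\|_{**}$ by $\|\phi_1-\phi_2\|_*$ with a small prefactor; these must be checked uniformly over all four regions (inner vs.\ exterior for each bubble) entering the definitions of $V$ and $W$. Once the algebraic identity $(p-1)(n-2)=4$ is used this is a routine but careful case analysis, and it is precisely the borderline case $n=6$ that dictates why only a $|\log R|^{1/2}$ loss appears in the global estimates later, while $n\geq 7$ gives genuine polynomial gain.
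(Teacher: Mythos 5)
Your proposal is correct and follows essentially the same route as the paper's: rewrite \eqref{rho0-non} as the fixed point problem $\phi = L_\delta(h_0 - N(\phi))$, use Proposition~\ref{prop:on-weight} to bound $\|h_0\|_{**}$, show $N$ is small in the $\|\cdot\|_{**}$-norm by verifying $W^pV^{-1}\lesssim R^{-\alpha}$ via the identity $p(n-2)=n+2$, and conclude by the contraction mapping theorem in a $\|\cdot\|_*$-ball. One tangential remark is misattributed: the $|\log R|^{1/2}$ loss at $n=6$ does not come from this contraction step (where the prefactor $R^{-4(p-1)}=R^{-4/(n-2)}$ is still polynomially small), but from the later $L^2$ estimate $\int VW$ over the exterior region in Proposition~\ref{prop:L2-rho0}.
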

\begin{proof}
Let
\begin{align}
   N_1(\phi)=&(\sigma+\phi)^p-\sigma^p-p\sigma^{p-1}\phi,\\
N_2=&\sigma^p-\sum_{i=1}^\nu U_i^p.
\end{align}
Then, \eqref{rho0-non} is equivalent to
\begin{align}
    \phi=A(\phi)=: -L_\delta(N_1(\phi))-L_\delta(N_2),
\end{align}
where $L_\delta$ is defined in Proposition \ref{prop:existence}. We will show that $A$ is a contraction mapping.

First, it follows from Proposition  \ref{prop:on-weight} that there exists $C_2=C_2(n,\nu)$
\begin{align}
  \|N_2\|_{**}\leq C_2.
\end{align}

Next,  we claim that $\|N_1(\phi)\|_{**}\leq C_1R^{-4(p-1)}\|\phi\|_*$. In fact, since  $|N_1(\phi)|\leq C|\phi|^p\leq C\|\phi\|_{*}^pW^p$,
then
\begin{align}\label{n1phi}
    \begin{split}
        \|N_1(\phi)\|_{**}\leq C\|\phi\|_{*}\sup_{\mathbb{R}^n} W^p(x)V^{-1}(x).
    \end{split}
\end{align}
Notice, for the first type weight function, say $|y_i|\leq R$, we have
\begin{align}
\left(\frac{\lambda_i^{\frac{n-2}{2}}R^{2-n}}{\langle y_i\rangle^2}\right)^p\left(\frac{\langle y_i\rangle^4}{\lambda_i^{\frac{n+2}{2}}R^{2-n}} \right)
=R^{-4}\langle y_i\rangle^{4-2p}\leq R^{-4}.
\end{align}
For the second type weight, say $R/2\leq |y_i|$,
we have
\begin{align}
    \begin{split}
        \left( \frac{\lambda_i^{\frac{n-2}{2}}R^{-4}}{\langle y_i\rangle^{n-4}}\right)^p\left(\frac{\langle y_i\rangle^{n-2}}{\lambda_i^{\frac{n+2}{2}}R^{-4}}\right)
        = R^{-4(p-1)}\langle y_i\rangle^{n-2-p(n-4)}
        \leq
        R^{-4(p-1)},
    \end{split}
\end{align}
since $n-2-p(n-4)\leq 0$. By the inequality \eqref{ineq-1}, we get
\begin{align}
    W^pV^{-1}\leq R^{-4(p-1)}.
\end{align}
 Thus, there exists $C_1=C_1(n,\nu)$ such that
\begin{align}
    \begin{split}
        \|N_1(\phi)\|_{**}\leq C_1R^{-4(p-1)}\|\phi\|_{*}.
    \end{split}
\end{align}
Making $C_1$ possibly larger so that we also have $\|L_\delta(h)\|_{*}\leq C_1\|h\|_{**}$ in Proposition \ref{prop:existence}.

Now define the space
\begin{align}
    E=\{u: \ u\in C(\mathbb{R}^n)\cap D^{1,2}(\mathbb{R}^n),
    \|u\|_{*}\leq C_1C_2+1\}.
\end{align}
We claim that $A$ is a contraction mapping from $E$ to $E$.
In fact, choosing $\delta$ small, then $R$ large such that $R^{-4(p-1)} C_1^2(C_1C_2+1)\leq 1$,  we have
\begin{align}
    \begin{split}
        \|A(\phi)\|_{*}\leq&\ C_1\|N_1(\phi)\|_{**}+C_1\|N_2\|_{**}\\
        \leq&\ R^{-4(p-1)} C^2_1(C_1C_2+1)+C_1C_2\leq C_1C_2+1.
    \end{split}
\end{align}
Thus, $A(E)\subset E$. Furthermore,
\begin{align}
    \begin{split}
        \|A(\phi_1)-A(\phi_2)\|_{*}\leq&\
        \|L_\delta(N_1(\phi_1))-L_\delta(N_1(\phi_2))\|_{**}\\
        \leq&\
        C_1\|N_1(\phi_1)-N_1(\phi_2)\|_{**}.
    \end{split}
\end{align}
If $n\geq6$, then $N_1'(t)\leq C|t|^{p-1}$. As a result,
\begin{align}
    \begin{split}
        |N_1(\phi_1)-N_1(\phi_2)|\leq&\
        C\left(|\phi_1|^{p-1}+|\phi_2|^{p-1}\right)|\phi_1-\phi_2|\\
        \leq&\
        C\left(\|\phi_1\|_{*}^{p-1}+\|\phi_2\|_{*}^{p-1}\right)\|\phi_1-\phi_2\|_{*}W^{p}.
        \end{split}
\end{align}
Since $ W^pV^{-1}\leq R^{-4(p-1)}\ll 1$ if $\delta$ small, we get
\begin{align}
    \|A(\phi_1)-A(\phi_2)\|_{*}\leq \frac{1}{2}\|\phi_1-\phi_2\|_{*}.
\end{align}
Thus, $A$ is a contraction. It follows from the contraction mapping theorem that there is a unique $\phi \in E$, such that $\phi =A(\phi)$. Moreover, it follows Proposition \ref{prop:existence} that
$ \|\phi\|_{*}\leq C$.
\end{proof}
\subsection{Some estimates of the error} In this subsection, we will establish $L^2$ estimates for the $\nabla\rho$, based on the point-wise estimates from the previous subsection.
\begin{proposition}\label{prop:L2-rho0}
Suppose $\delta$ is small enough. We have the gradient estimates
\begin{align}\label{rho0-L2}
\|\nabla\rho_0\|_{L^2}\lesssim \begin{cases}Q|\log Q|^{\frac12}, &\text{if }n=6,\\ Q^{\frac{p}{2}},&\text{if }n\geq 7.\end{cases}
\end{align}
\end{proposition}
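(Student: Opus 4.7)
\medskip
\noindent\textbf{Proof proposal for Proposition \ref{prop:L2-rho0}.}
The plan is to run an energy estimate on the equation of $\rho_0$ and then to convert the resulting non-homogeneous term into an explicit weighted integral that we can compute. Testing \eqref{rho0-non} against $\rho_0$ and using the orthogonality $\int U_i^{p-1}Z_i^a\rho_0=0$, the Lagrange-multiplier contribution on the right-hand side vanishes, so
\begin{align*}
\int|\nabla\rho_0|^2=\int\bigl[(\sigma+\rho_0)^p-\sigma^p\bigr]\rho_0+\int I_1\,\rho_0,\qquad I_1=\sigma^p-\sum_{i=1}^\nu U_i^p.
\end{align*}
Splitting $(\sigma+\rho_0)^p-\sigma^p=p\sigma^{p-1}\rho_0+N_1(\rho_0)$ with $|N_1(\rho_0)|\lesssim|\rho_0|^p$, the identity rearranges to
\begin{align*}
\int|\nabla\rho_0|^2-p\int\sigma^{p-1}\rho_0^2=\int N_1(\rho_0)\rho_0+\int I_1\,\rho_0.
\end{align*}

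Next I would invoke the standard spectral gap for a weakly-interacting sum of bubbles (a perturbation of the one-bubble non-degeneracy under the orthogonality $\int U_i^{p-1}Z_i^a\rho_0=0$) to conclude
\begin{align*}
\int|\nabla\rho_0|^2-p\int\sigma^{p-1}\rho_0^2\geq c\|\nabla\rho_0\|_{L^2}^2
\end{align*}
for some $c=c(n,\nu)>0$. The nonlinear remainder is higher order: by Sobolev,
\begin{align*}
\Bigl|\int N_1(\rho_0)\rho_0\Bigr|\lesssim\int|\rho_0|^{p+1}\lesssim\|\nabla\rho_0\|_{L^2}^{p+1},
\end{align*}
which is absorbed into the left-hand side since $p>1$ and $\|\nabla\rho_0\|_{L^2}$ is small. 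Thus it suffices to control $|\int I_1\rho_0|$.

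For the main term, use $|\rho_0|\leq CW$ from Proposition \ref{prop:rho0-exist} and $|I_1|\leq CV$ from Proposition \ref{prop:on-weight} to obtain $|\int I_1\rho_0|\lesssim \int VW$. The integral $\int VW$ is then computed piece by piece via the interaction estimates in Lemma \ref{lem:s-U-rho}. For the typical diagonal term, the change of variable $y_i=\lambda_i(x-z_i)$ yields
\begin{align*}
\int v_{i,1}w_{i,1}\approx R^{-2(n-2)}\int_{|y|\leq R}\langle y\rangle^{-6}dy\approx R^{-(n+2)}\approx Q^p\qquad(n\geq 7),
\end{align*}
and analogous bounds hold for the mixed and outer-region contributions. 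This produces $\|\nabla\rho_0\|_{L^2}^2\lesssim Q^p$ and hence the desired $\|\nabla\rho_0\|_{L^2}\lesssim Q^{p/2}$ when $n\geq 7$.

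The main obstacle is $n=6$: the outer integral $\int_{|y|\geq R/2}\langle y\rangle^{-6}dy$ is logarithmically divergent, so the crude majorant $|I_1|\leq CV$ loses too much at infinity (cf.\ Remark \ref{rmk:on-weight}). As the remark suggests, I would exploit the exact identity $I_1=\sigma^2-\sum_iU_i^2=\sum_{i\neq j}U_iU_j$ available when $p=2$, and estimate each $\int U_iU_j|\rho_0|\lesssim \int U_iU_jW$ directly; the genuine decay of $U_iU_j$ in the outer region now makes these integrals finite, while the inner region contributes the expected $R^{-8}\log R\approx Q^2|\log Q|$. Combining, $\|\nabla\rho_0\|_{L^2}^2\lesssim Q^2|\log Q|$, giving $\|\nabla\rho_0\|_{L^2}\lesssim Q|\log Q|^{1/2}$, as claimed. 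The most delicate step is the bookkeeping in Lemma \ref{lem:s-U-rho}: handling all the bubble tower/cluster configurations uniformly while distinguishing the $n=6$ borderline behavior from $n\geq 7$.
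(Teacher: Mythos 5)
Your energy identity, the bound for the cubic term $\int|\rho_0|^{p+1}\lesssim\|\nabla\rho_0\|_{L^2}^{p+1}$, and the treatment of $\int I_1\rho_0$ via the pointwise bounds $|\rho_0|\lesssim W$, $|I_1|\lesssim V$ (with the separate $n=6$ argument exploiting $I_1=\sum_{i\neq j}U_iU_j$) all agree with the paper. But the coercivity step
\begin{align*}
\int|\nabla\rho_0|^2-p\int\sigma^{p-1}\rho_0^2\geq c\,\|\nabla\rho_0\|_{L^2}^2
\end{align*}
is not justified by the orthogonality $\int U_i^{p-1}Z_i^a\rho_0=0$, and in fact it fails in general under that condition. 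The single-bubble quadratic form $Q(\phi)=\int|\nabla\phi|^2-p\int U^{p-1}\phi^2$ satisfies $Q(U)=(1-p)\int U^{p+1}<0$; the negative direction is $U$ itself, whereas the $Z^a$ span only the kernel of $\Delta+pU^{p-1}$. So the spectral gap requires projecting out the $U_i$'s as well as the $Z_i^a$'s. This is precisely the hypothesis used in Lemma~\ref{lem:rho2} of the paper (the second-variation estimate there uses $\int\nabla\rho_2\cdot\nabla U_i=0$ in addition to $\int\nabla\rho_2\cdot\nabla Z_i^a=0$). The function $\rho_0$ is only orthogonal to the $Z_i^a$'s, not to the $U_i$'s, so your claimed inequality has no basis.

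The paper sidesteps this entirely: instead of absorbing $p\int\sigma^{p-1}\rho_0^2$ into the Dirichlet energy, it bounds it directly through the pointwise estimate $|\rho_0|\lesssim W$ and the explicit integral computations of Lemma~\ref{lem:sigma(p-1)rho2}, which give $\int\sigma^{p-1}\rho_0^2\lesssim R^{-n-2}\approx Q^p$ -- a term of the same size as the final answer, and therefore harmless on the right-hand side of the energy inequality. If you replace your spectral-gap step by this direct bound, your argument goes through and matches the paper's.
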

\begin{proof}
Multiplying $\rho_0$ to \eqref{rho0-non} to get
\begin{align}\label{rho0-L2-1}
    \int |\nabla \rho_0|^2\lesssim \int \sigma^{p-1}\rho_0^2+\int|\rho_0|^{p+1}+\int (\sigma^p-\sum U_i^p)\rho_0.
\end{align}
It follows from Proposition \ref{prop:rho0-exist} that $|\rho_0(x)|\lesssim W(x)$.
By the Lemma \ref{lem:sigma(p-1)rho2}, we have
\begin{align}\label{rho0-L2-2}
    \begin{split}
        \int \sigma^{p-1}\rho_0^2&\lesssim\
        \sum_{i=1}^\nu \int U_i^{p-1}\rho_0^2\\
        \lesssim&\ \sum_{i=1}^\nu \left(\int_{|y_i|\leq R}\frac{\lambda_i^2}{\langle y_i\rangle^4}\frac{\lambda_i^{n-2}R^{4-2n}}{\langle y_i\rangle^{4}}dx
        +\int_{|y_i|\geq R/2}\frac{\lambda_i^2}{\langle y_i\rangle^4}\frac{\lambda_i^{n-2}R^{-8}}{\langle y_i\rangle^{2n-8}}dx\right)\\
        &\ +\sum_{i\neq j} \left(\int_{|y_i|\leq R}\frac{\lambda_j^2}{\langle y_j\rangle^4}\frac{\lambda_i^{n-2}R^{4-2n}}{\langle y_i\rangle^{4}}dx
        +\int_{|y_i|\geq R/2}\frac{\lambda_j^2}{\langle y_j\rangle^4}\frac{\lambda_i^{n-2}R^{-8}}{\langle y_i\rangle^{2n-8}}dx\right)\\
        \lesssim&\ R^{-n-2}\thickapprox Q^p.
    \end{split}
\end{align}
By Sobolev inequality, we have
\begin{align}\label{rho0-L2-3}
    \int |\rho_0|^{p+1}\lesssim \|\nabla\rho_0\|_{L^2}^{p+1}.
\end{align}
Now consider the last term  in \eqref{rho0-L2-1}.  Proposition \ref{prop:on-weight} says that $|\sigma^p-\sum_i U_i^p|\lesssim V(x)$.

$\bullet$ If $n\geq 7$, using \eqref{define-V} and \eqref{define-W}, we obtain
\begin{align}\label{intVW}
\begin{split}
   &\left| \int (\sigma^p-\sum U_i^p)\rho_0  \right|\lesssim \int V(x)W(x)\\
   &\leq\sum_{i,j=1}^\nu \int_{\substack{|y_i|\leq R\\|y_j|\leq R}} \frac{\lambda_i^{\frac{n+2}{2}}R^{2-n}}{\langle y_i\rangle^{4}}\frac{\lambda_j^{\frac{n-2}{2}}R^{2-n}}{\langle y_j\rangle^{2}}+\int_{\substack{|y_i|\leq R\\|y_j|\geq R/2}}\frac{\lambda_i^{\frac{n+2}{2}}R^{2-n}}{\langle y_i\rangle^{4}}\frac{\lambda_j^{\frac{n-2}{2}}R^{-4}}{\langle y_j\rangle^{n-4}}\\
   &+\sum_{i,j=1}^\nu \int_{\substack{|y_i|\geq R/2\\ |y_j|\leq R}}\frac{\lambda_i^{\frac{n+2}{2}}R^{-4}}{\langle y_i\rangle^{n-2}}\frac{\lambda_j^{\frac{n-2}{2}}R^{2-n}}{\langle y_j\rangle^{2}}+\int_{\substack{|y_i|\geq R/2\\|y_j|\geq R/2}}\frac{\lambda_i^{\frac{n+2}{2}}R^{-4}}{\langle y_i\rangle^{n-2}}\frac{\lambda_j^{\frac{n-2}{2}}R^{-4}}{\langle y_j\rangle^{n-4}}.
   \end{split}
\end{align}
By some tedious computations, Lemma \ref{lem:s-U-rho} shows
\begin{align}\label{rho0-L2-4}
    \int (\sigma^p-\sum U_i^p)\rho_0\lesssim R^{-n-2}\approx Q^p.
\end{align}

$\bullet$ If $n=6$, then $p=2$. By Lemma \ref{lem:6-dim}, we have
\begin{align}\label{rho0-L2-5}
\left|\int (\sigma^2-\sum U_i^2)\rho_0\right|\leq\sum_{i\neq j} \left|\int U_iU_j\rho_0\right|\lesssim R^{-8}\log R\thickapprox Q^2|\log Q|.
\end{align}
Plugging \eqref{rho0-L2-2}, \eqref{rho0-L2-3}, \eqref{rho0-L2-4}, \eqref{rho0-L2-5}  into \eqref{rho0-L2-1}, we get \eqref{rho0-L2}.
\end{proof}

Now consider $\rho_1=\rho-\rho_0$. Recall $\rho$ satisfies \eqref{rho-1} and $\rho_0$ satisfies \eqref{rho0-non}. Thus $\rho_1$ solves
\begin{align}\label{rho1-eq}
    \begin{cases}\Delta \rho_{1}+\left[\left(\sigma+\rho_{0}+\rho_{1}\right)^{p}-\left(\sigma+\rho_{0}\right)^{p}\right]+\sum_{i, a} c_{a}^j U_{j}^{p-1} Z_{j}^a+f=0.\\
    \int \nabla \rho_1 \cdot\nabla Z_i^a=0 \quad i=1,\cdots, \nu;\ a=1,\cdots, n+1.
    \end{cases}
\end{align}
Now we decompose
\begin{align}\label{rho1-decom}
\rho_1=\sum_{i=1}^\nu\beta^i U_i+\sum_{i=1}^\nu\sum_{a=1}^{n+1}\beta_a^iZ_{i}^a+\rho_2,
\end{align}
such that for all $i=1,\cdots, \nu$ and $a=1,\cdots, n+1$
\begin{align}\label{rho2-orth}
    \int \nabla \rho_2\cdot \nabla U_i=0=\int \nabla \rho_2\cdot \nabla Z_{i}^a.
\end{align}

\begin{lemma}\label{lem:rho2}
If $\delta$ is small enough, then $\rho_2$ satisfies
\begin{align}
     \|\nabla\rho_2\|_{L^2}\lesssim \sum_{i=1}^\nu|\beta^i|+\sum_{i=1}^\nu\sum_{a=1}^{n+1}|\beta_a^i|+\|f\|_{H^{-1}}.
\end{align}
\end{lemma}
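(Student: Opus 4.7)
The plan is to test the equation \eqref{rho1-eq} against $\rho_2$ itself and exploit the orthogonality conditions \eqref{rho2-orth} to reduce everything to a coercivity estimate for the linearized operator. Since $-\Delta U_i = U_i^p$ and $-\Delta Z_i^a = p U_i^{p-1} Z_i^a$, the conditions $\int \nabla \rho_2 \cdot \nabla U_i = 0$ and $\int \nabla \rho_2 \cdot \nabla Z_i^a = 0$ translate, via integration by parts, into $\int U_i^p \rho_2 = 0$ and $\int U_i^{p-1} Z_i^a \rho_2 = 0$. Hence the Lagrange-multiplier term $\sum_{j,a} c_a^j U_j^{p-1} Z_j^a$ pairs trivially with $\rho_2$, and by the decomposition \eqref{rho1-decom} together with the same orthogonality one obtains $\int \nabla \rho_1 \cdot \nabla \rho_2 = \int |\nabla \rho_2|^2$.

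Next I would expand the nonlinearity. Since $p \in (1,2]$ for $n\geq 6$, the bound
\[ \bigl|(\sigma+\rho_0+\rho_1)^p - (\sigma+\rho_0)^p - p(\sigma+\rho_0)^{p-1}\rho_1\bigr| \lesssim |\rho_1|^p \]
together with $(\sigma+\rho_0)^{p-1} = \sigma^{p-1} + O(|\rho_0|^{p-1})$ yields
\[ \int |\nabla \rho_2|^2 - p \int \sigma^{p-1}\rho_2^2 = p\sum_i \beta^i \!\int \sigma^{p-1}U_i \rho_2 + p\sum_{i,a} \beta_a^i \!\int \sigma^{p-1}Z_i^a \rho_2 + R + \int f \rho_2, \]
where $R$ collects the remainder terms of size $|\rho_1|^p|\rho_2|$ and $|\rho_0|^{p-1}|\rho_1||\rho_2|$. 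By H\"older and Sobolev these contribute at most $\bigl(\|\nabla\rho_0\|_{L^2}^{p-1}+\|\nabla\rho_1\|_{L^2}^{p-1}\bigr)\|\nabla\rho_1\|_{L^2}\|\nabla\rho_2\|_{L^2} = o(1)\|\nabla\rho_1\|_{L^2}\|\nabla\rho_2\|_{L^2}$.

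The heart of the matter is the coercivity
\[ \int |\nabla \rho_2|^2 - p \int \sigma^{p-1}\rho_2^2 \geq c_0 \|\nabla \rho_2\|_{L^2}^2, \]
valid under the orthogonality against every $U_i$ and $Z_i^a$. For a single bubble this is the standard non-degeneracy/spectral gap (the unique negative direction of $-\Delta - pU^{p-1}$ is spanned by $U$, whose direction is killed by the orthogonality to $U_i$, and the kernel is $Z^a$); for a $\delta'$-interacting sum one localizes with a partition of unity adapted to the bubbles and absorbs cross terms, which are of size $o(1)$ by the interaction estimates of the appendix. This is the step I expect to be the main obstacle, since we must simultaneously deal with both kinds of orthogonality and with bubble towers where naive cutoffs fail; the right cutoffs must be chosen on the scale of $R_{ij}$.

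The cross terms are handled by observing that $\int U_i^p \rho_2 = 0$ and $\int U_i^{p-1}Z_i^a \rho_2 = 0$, which allow us to rewrite
\[ \int \sigma^{p-1}U_i \rho_2 = \int(\sigma^{p-1}-U_i^{p-1})U_i \rho_2,\qquad \int \sigma^{p-1}Z_i^a\rho_2 = \int(\sigma^{p-1}-U_i^{p-1})Z_i^a \rho_2. \]
Using the elementary bound $|\sigma^{p-1}-U_i^{p-1}|\lesssim \sum_{j\neq i} U_j^{p-1}$ (valid for $p\leq 2$) combined with H\"older at the Sobolev exponent and the interaction estimate $\int U_i^{p-1}U_j \lesssim q_{ij}$ from Lemma \ref{lem:2int-est}, both integrals are $o(1)\,\|\nabla\rho_2\|_{L^2}$. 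Putting everything together gives
\[ c_0 \|\nabla\rho_2\|_{L^2}^2 \leq o(1)\Bigl(\sum_i |\beta^i| + \sum_{i,a}|\beta_a^i|\Bigr)\|\nabla\rho_2\|_{L^2} + o(1)\|\nabla\rho_2\|_{L^2}^2 + \|f\|_{H^{-1}}\|\nabla\rho_2\|_{L^2}, \]
and absorbing the $o(1)\|\nabla\rho_2\|_{L^2}^2$ term into the left-hand side (plus Young's inequality on the first product) yields exactly the claimed bound.
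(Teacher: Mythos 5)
Your proof is correct and follows essentially the same line as the paper: test \eqref{rho1-eq} against $\rho_2$, kill the Lagrange multiplier via the orthogonality $\int U_j^{p-1}Z_j^a\rho_2=0$, expand the nonlinearity, and reduce to the spectral-gap estimate $p\int\sigma^{p-1}\rho_2^2\leq \tilde c\int|\nabla\rho_2|^2$ with $\tilde c<1$, absorbing the $|\rho_0|^{p-1}$ and cross-term errors. The coercivity you flag as the main obstacle need not be re-derived here — it is a standard second-variation estimate for weakly interacting bubble sums, and the paper simply cites \cite[Prop 3.1]{bahri1988nonlinear} and \cite[Prop 3.10]{figalli2020sharp}; also note that the extra $o(1)$ you win on $\int\sigma^{p-1}U_i\rho_2$ via orthogonality is not needed, since an $O(1)\mathcal{B}\|\nabla\rho_2\|_{L^2}$ bound already yields the stated inequality.
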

\begin{proof}
Multiply the \eqref{rho1-eq} by $\rho_2$ and integrating, we get
\begin{align*}
    \int |\nabla \rho_2|^2=&\int [(\sigma+\rho_0+\rho_1)^p-(\sigma+\rho_0)^p]\rho_2+\int |\rho_2f|
\end{align*}
Since the elementary inequality
\begin{align}\label{s-rho01}
    |(\sigma+\rho_0+\rho_1)^p-(\sigma+\rho_0)^p-p(\sigma+\rho_0)^{p-1}\rho_1|\lesssim |\rho_1|^p
\end{align}
then
\begin{align}\label{nrho2}
&\int |\nabla \rho_2|^2\leq p\int |\sigma+\rho_0|^{p-1}|\rho_1\rho_2|+\int |\rho_1|^p|\rho_2|+\int |\rho_2 f|.
\end{align}
Notice the decomposition of $\rho_1$ in \eqref{rho1-decom},
\[p\int |\sigma+\rho_0|^{p-1}|\rho_1\rho_2|\leq p\int |\sigma+\rho_0|^{p-1}\rho_2^2+\mathcal{B}\int |\sigma+\rho_0|^{p-1}U_i|\rho_2|\]
where $\mathcal{B}=\sum_i |\beta^i|+\sum_{i,a}|\beta_a^i|$.
By H\"older inequality
\begin{align}\begin{split}
    &\int |\sigma+\rho_0|^{p-1}U_i|\rho_2| \lesssim \|\rho_2\|_{L^{2^*}}\||\sigma+\rho_0|^{p-1}U_i\|_{L^{(2^*)'}}\lesssim \|\nabla\rho_2\|_{L^2},\\
    &\int |\rho_1|^p|\rho_2|\leq \|\rho_1\|_{L^{2^*}}^p\|\rho_2\|_{L^{2^*}}\lesssim  (\mathcal{B}+\|\nabla\rho_2\|_{L^2})^p\|\nabla\rho_2\|_{L^2}\label{rho1prho2}.\end{split}
\end{align}
It follows from the second variation estimate (for instance, see \cite[Prop 3.1]{bahri1988nonlinear} and \cite[Prop 3.10]{figalli2020sharp}) and the orthogonal condition of $\rho_2$ that there exists constant $\tilde c<1$ such that
\begin{align}
    p\int \sigma^{p-1}\rho_2^2\leq \tilde c\int |\nabla \rho_2|^2.
\end{align}
Recall a simple inequality that if $x>0$ and $p\in (1,2]$ then  $\left\|x+y|^{p-1}-|x|^{p-1}\right|\leq C|y|^{p-1}$ for any $y$. Consequently
\begin{align*}
    p\int |\sigma+\rho_0|^{p-1}\rho_2^2\leq \tilde c\int |\nabla \rho_2|^2+C\int |\rho_0|^{p-1}\rho_2^2\leq (\tilde c+C\|\nabla\rho_0\|_{L^2}^{p-1}) \|\nabla\rho_2\|_{L^2}^2.
\end{align*}
Therefore,
\begin{align}\label{eq:psp}
    p\int |\sigma+\rho_0|^{p-1}|\rho_1\rho_2|\leq (\tilde c+C\|\nabla\rho_0\|_{L^2}^{p-1}) \|\nabla\rho_2\|_{L^2}^2+C\mathcal{B}\|\nabla\rho_2\|_{L^2}.
\end{align}
By Proposition \ref{prop:L2-rho0}, we can make $\|\nabla\rho_0\|_{L^2}\ll 1$. Plugging in \eqref{rho1prho2} and \eqref{eq:psp} to \eqref{nrho2},
\begin{align*}
    \|\nabla\rho_2\|_{L^2}^2\lesssim \mathcal{B}\|\nabla\rho_2\|_{L^2}+(\mathcal{B}+\|\nabla\rho_2\|_{L^2})^p\|\nabla\rho_2\|_{L^2}+\|f\|_{H^{-1}}\|\nabla\rho_2\|_{L^2}.
\end{align*}
Choosing $\delta$ small such that $\|\nabla\rho_2\|_{L^2}<1$ and $\mathcal{B}<1$, then
\begin{align}\label{nrho2Q}
    \|\nabla\rho_2\|_{L^2}\lesssim \mathcal{B}+\|f\|_{H^{-1}}.
\end{align}
\end{proof}
\begin{lemma}\label{lem:beta-i}
If $\delta$ is small, then
\[|\beta^i|+|\beta_a^i|\lesssim Q^2+\|f\|_{H^{-1}}.\]
\end{lemma}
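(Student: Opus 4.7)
The approach is to test equation \eqref{rho1-eq} against the modes $U_k$ and $Z_k^b$ themselves to extract the coefficients $\beta^k$ and $\beta_b^k$. The underlying algebraic facts are: $-\Delta U_k=U_k^p$ and $-\Delta Z_k^a=pU_k^{p-1}Z_k^a$; the diagonal inner products $\int U_k^{p+1}=:c_0$ and $\int pU_k^{p-1}(Z_k^a)^2=\gamma^a$ are non-zero by Lemma \ref{lem:kernel}; $\int U_k^p Z_k^a=0$ by the scale/translation invariance of $\int U_k^{p+1}$; and all cross-bubble integrals are $O(Q)$ by the appendix estimates. Consequently the Gram matrix of $\{U_k,Z_k^a\}_{k,a}$ in $\dot H^1$ is diagonal up to $O(Q)$. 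Writing $\mathcal B:=\sum_i|\beta^i|+\sum_{i,a}|\beta_a^i|$, the goal is to close a linear system for the $\beta$'s with $O(Q)\mathcal B$ off-diagonal errors and a right-hand side of size $Q^2+\|f\|_{H^{-1}}$.

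The bound on $\beta_b^k$ comes from the orthogonality $\int\nabla\rho_1\cdot\nabla Z_k^b=0$ built into \eqref{rho1-eq}. Substituting \eqref{rho1-decom}, discarding the $\rho_2$ contribution via \eqref{rho2-orth}, and applying the integration-by-parts identities above yields $0=\gamma^b\beta_b^k+O(Q)\mathcal B$, whence $|\beta_b^k|\lesssim Q\mathcal B$.

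The bound on $\beta^k$ is obtained by multiplying \eqref{rho1-eq} by $U_k$ and integrating. The decomposition side gives $\int\rho_1 U_k^p=c_0\beta^k+O(Q)\mathcal B$, while the equation side gives
\[\int\rho_1 U_k^p=\int U_k\bigl[(\sigma+\rho_0+\rho_1)^p-(\sigma+\rho_0)^p\bigr]+\sum_{j,a}c_a^j\int U_j^{p-1}Z_j^aU_k+\int fU_k.\]
The forcing contributes $O(\|f\|_{H^{-1}})$; Lemma \ref{lem:c_jb} applied to \eqref{rho0-non} gives $|c_a^j|\lesssim Q$, so the multiplier term is $O(Q^2)$. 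For the nonlinear term I use $|(A+B)^p-A^p-pA^{p-1}B|\lesssim|B|^p$ (valid for $p\in(1,2]$ and $A\geq 0$) together with the H\"older bound $|(\sigma+\rho_0)^{p-1}-\sigma^{p-1}|\lesssim|\rho_0|^{p-1}$ to reduce it to $p\int U_k\sigma^{p-1}\rho_1$ plus errors of order $\|\nabla\rho_0\|^{p-1}\|\nabla\rho_1\|+\|\nabla\rho_1\|^p$. Splitting $\sigma^{p-1}=U_k^{p-1}+(\sigma^{p-1}-U_k^{p-1})$ with $|\sigma^{p-1}-U_k^{p-1}|\leq\sigma_k^{p-1}$ (valid for $p\leq 2$), the main part is $p\int U_k^p\rho_1=pc_0\beta^k+O(Q)\mathcal B$, and the residual $\int U_k\sigma_k^{p-1}|\rho_1|$ is controlled by the appendix interaction integrals to give $O(Q)\mathcal B+O(Q^{\gamma})\|\nabla\rho_2\|$ for some $\gamma>0$. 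Collecting and rearranging,
\[(p-1)c_0\beta^k=O(Q)\mathcal B+O(Q^2)+O(\|f\|_{H^{-1}})+O\bigl(\|\nabla\rho_1\|^p+\|\nabla\rho_0\|^{p-1}\|\nabla\rho_1\|\bigr).\]
Since $p>1$ the coefficient is nontrivial, so this gives $|\beta^k|\lesssim Q\mathcal B+Q^2+\|f\|_{H^{-1}}+(\text{h.o.t.})$.

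Summing both bounds over $k,b$ and using Lemma \ref{lem:rho2} to absorb $\|\nabla\rho_1\|\lesssim\mathcal B+\|f\|_{H^{-1}}$ into the higher-order pieces yields $\mathcal B\lesssim Q\mathcal B+Q^2+\|f\|_{H^{-1}}$, and for $\delta$ so small that $Q\ll 1$ the $Q\mathcal B$ term is absorbed on the left, producing $\mathcal B\lesssim Q^2+\|f\|_{H^{-1}}$. The main obstacle is the careful accounting of the residual $\int U_k(\sigma^{p-1}-U_k^{p-1})\rho_1$: the prefactor multiplying $\|\nabla\rho_2\|$ must come out genuinely small rather than merely $O(1)$, which forces one to pair H\"older with the correct two- and three-bubble estimates from the appendix so that the closure argument succeeds.
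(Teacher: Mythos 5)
Your proposal follows the paper's proof essentially verbatim: test \eqref{rho1-eq} against $U_k$, invoke the orthogonality $\int\nabla\rho_1\cdot\nabla Z_k^b=0$ for the $\beta_b^k$, use the same elementary inequality $|(\sigma+\rho_0+\rho_1)^p-(\sigma+\rho_0)^p-p(\sigma+\rho_0)^{p-1}\rho_1|\lesssim|\rho_1|^p$ together with $|(\sigma+\rho_0)^{p-1}-U_k^{p-1}|\lesssim\sum_{i\neq k}U_i^{p-1}+|\rho_0|^{p-1}$, bound the error terms by H\"older/Sobolev and Lemma \ref{lem:rho2}, and close the resulting diagonally dominant linear system by absorbing the $O(Q)\mathcal{B}$ off-diagonal. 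The bookkeeping matches the paper's \eqref{beta-1}--\eqref{beta-2}, so this is correct and is the same argument.
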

\begin{proof}
We shall multiply \eqref{rho1-eq} by $U_k$ and integrate it. Before that, let us make some preparations. It follows from  \eqref{s-rho01} and $|(\sigma+\rho_0)^{p-1}-U_k^{p-1}|\lesssim \sum_{i\neq k}U_i^{p-1}+|\rho_0|^{p-1}$ that
\begin{align}
\begin{split}
    &\left|\int[(\sigma+\rho_0+\rho_1)^p-(\sigma+\rho_0)^p]U_k-p\int U_k^{p}\rho_1\right|\\
    &\hspace{1cm}\lesssim
    \sum_{i\neq k}\int U_i^{p-1}U_k|\rho_1|+ \int |\rho_1|^p U_k+\int |\rho_0|^{p-1}\rho_1U_k.
\end{split}
\end{align}
By H\"older inequality and Sobolev inequality,
\begin{align*}
    \int |\rho_1|^p U_k&\lesssim \mathcal{B}^p+\int\rho_2^pU_k\lesssim \mathcal{B}^p+\|f\|_{H^{-1}}^p,\\
    \int|\rho_0|^{p-1}\rho_1U_k&\lesssim \|\nabla\rho_0\|_{L^2}^{p-1}\|\nabla\rho_1\|_{L^2}\|U_k\|_{L^{2^*}}\lesssim o(1)\left(\mathcal{B}+\|f\|_{H^{-1}}\right),\\
    \int U_i^{p-1}|\rho_1|U_k&\leq \|\rho_1\|_{L^{2^*}}\|U_i^{p-1}U_k\|_{L^{(2^*)'}}\lesssim o(1)\left(\mathcal{B}+\|f\|_{H^{-1}}\right),\quad i\neq k.
\end{align*}
Here $o(1)$ denotes a quantity goes to $0$ when $\delta \to 0$.
Multiplying \eqref{rho1-eq} by $U_k$ and integrate it. The above estimates give
\begin{align*}
    -\int \nabla \rho_1\nabla U_k+p\int U_k^p\rho_1\lesssim& o(1)\left(\mathcal{B}+\|f\|_{H^{-1}}\right)+\sum_{j,a}|c_a^j| \left|\int U_j^{p-1}Z_j^a U_k\right|+\int |fU_k|.
\end{align*}
We know that $\int U_j^{p-1}Z_j^a U_k=0$ if $j=k$ and $\int U_j^{p-1}Z_j^a U_k\lesssim \int U_i^{p}U_k\approx Q$ if $j\neq k$ by Lemma \ref{lem:2int-est}. It follows from Proposition \ref{prop:on-weight}, Lemma \ref{lem:c_jb} that $|c_a^j|\lesssim Q$. Inserting these estimates to the above equation,
\begin{align*}
    (p-1)\left(\sum_i \beta^i\int U_i^p U_k+\sum_{i,a} \beta_a^i\int U_k^{p}Z_i^a \right)\lesssim o(1)\mathcal{B}+Q^2+\|f\|_{H^{-1}},
\end{align*}
that is
\begin{align}\label{beta-1}
    (p-1)\beta^k+\sum_{i\neq k}\beta^iO(q_{ik})+\sum_{i,a}\beta^i_aO(q_{ik})\lesssim o(1)\mathcal{B}+Q^2+\|f\|_{H^{-1}}.
\end{align}

Since $\int\nabla \rho_1\cdot \nabla Z^k_b=0$, then \eqref{rho1-decom} and \eqref{rho2-orth} imply
\[\sum_{i=1}^\nu\beta^i\int \nabla U_i\cdot\nabla Z^b_k+\sum_{i=1}^\nu\sum_{a=1}^{n+1}\beta_a^i\int \nabla Z_i^a\cdot \nabla Z^b_k=0.\]
Using Lemma \ref{lem:kernel}, it is equivalent to
\begin{align}\label{beta-2}
    \sum_{i\neq k} \beta^iO(q_{ik})+\beta^k_b+\sum_{i\neq k,a\neq b}\beta^i_aO(q_{ik}) =0.
\end{align}
Combining \eqref{beta-1} and \eqref{beta-2}, we can get the conclusion.


\end{proof}
\begin{proposition}\label{prop:rho1-gradient}
Suppose $\delta$ is small enough. We have
\[\|\nabla\rho_1\|_{L^2}\lesssim Q^2+\|f\|_{H^{-1}}.\]
\end{proposition}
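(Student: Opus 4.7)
The plan is to simply assemble the decomposition \eqref{rho1-decom} together with Lemma \ref{lem:rho2} and Lemma \ref{lem:beta-i}. Since $\rho_1=\sum_i \beta^i U_i+\sum_{i,a}\beta^i_a Z_i^a+\rho_2$, the triangle inequality gives
\[
\|\nabla\rho_1\|_{L^2}\leq \sum_{i=1}^\nu|\beta^i|\,\|\nabla U_i\|_{L^2}+\sum_{i=1}^\nu\sum_{a=1}^{n+1}|\beta^i_a|\,\|\nabla Z_i^a\|_{L^2}+\|\nabla\rho_2\|_{L^2}.
\]
Since $\|\nabla U_i\|_{L^2}$ and $\|\nabla Z_i^a\|_{L^2}$ are dimensional constants (the bubble and its rescaled derivatives have bounded $\dot H^1$-norm by scale invariance), the first two sums are controlled by $\mathcal B:=\sum_i|\beta^i|+\sum_{i,a}|\beta^i_a|$ up to a constant depending only on $n$ and $\nu$.

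Next I would plug in the two previous estimates: Lemma \ref{lem:beta-i} gives $\mathcal B\lesssim Q^2+\|f\|_{H^{-1}}$, and Lemma \ref{lem:rho2} gives $\|\nabla\rho_2\|_{L^2}\lesssim \mathcal B+\|f\|_{H^{-1}}$. Chaining them yields
\[
\|\nabla\rho_1\|_{L^2}\lesssim \mathcal B+\|\nabla\rho_2\|_{L^2}\lesssim \mathcal B+\|f\|_{H^{-1}}\lesssim Q^2+\|f\|_{H^{-1}},
\]
which is precisely the statement of the proposition.

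There is essentially no genuine obstacle at this stage: all the difficulty has already been absorbed into the two preceding lemmas. The only thing to double-check is that $\delta$ is taken small enough so that the smallness conditions in the proofs of Lemma \ref{lem:rho2} (which requires $\|\nabla\rho_0\|_{L^2}\ll 1$ and $\mathcal B<1$) and Lemma \ref{lem:beta-i} are simultaneously in force; this is guaranteed by Proposition \ref{prop:L2-rho0} together with the weak-interaction hypothesis $Q<\delta$, so no additional smallness needs to be imposed beyond that already required throughout the section.
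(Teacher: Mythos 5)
Your argument is correct and is essentially identical to the paper's proof: both decompose $\rho_1$ via \eqref{rho1-decom}, bound $\|\nabla\rho_1\|_{L^2}$ by $\mathcal B+\|\nabla\rho_2\|_{L^2}$ using the boundedness of $\|\nabla U_i\|_{L^2}$ and $\|\nabla Z_i^a\|_{L^2}$, and then invoke Lemma \ref{lem:rho2} and Lemma \ref{lem:beta-i}. The only difference is that you spell out the triangle-inequality step and the smallness bookkeeping, which the paper leaves implicit.
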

\begin{proof}
This just follows from Lemma \ref{lem:beta-i} and Lemma \ref{lem:rho2} with
\[\|\nabla\rho_1\|_{L^2}\lesssim \sum_i |\beta^i|+\sum_{i,a}|\beta_a^i|+\|\nabla\rho_2\|_{L^2}.\]
\end{proof}


Finally, we can prove the estimates which are used in the proof of the main theorem.
\begin{lemma}\label{lem:srU-est}
Suppose $\delta$ is small enough. We have
\begin{align*}
    \left|\int \sigma^{p-1} \rho Z_k^{n+1}\right|=o(Q)+\|f\|_{H^{-1}},\quad \left|\int |\rho|^{p} Z_k^{n+1}\right|=o(Q)+\|f\|_{H^{-1}}.
\end{align*}
\end{lemma}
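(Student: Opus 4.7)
The plan is to decompose $\rho=\rho_0+\rho_1$ as provided by Propositions \ref{prop:rho0-exist}--\ref{prop:rho1-gradient}, and exploit three ingredients: the pointwise bound $|\rho_0|\le CW$; the gradient bound $\|\nabla\rho_1\|_{L^2}\lesssim Q^2+\|f\|_{H^{-1}}$; and the orthogonality $\int U_k^{p-1}Z_k^{n+1}\rho_0=0$ inherited from the reduction formulation \eqref{w2.3} used to construct $\rho_0$ in Proposition \ref{prop:rho0-exist}. Throughout I use $|Z_k^{n+1}|\lesssim U_k$. The $\rho_1$ piece in each integral will be handled by H\"older and Sobolev, while the $\rho_0$ piece requires the orthogonality plus pointwise control.

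For $\int\sigma^{p-1}\rho Z_k^{n+1}$, the $\rho_1$ contribution is bounded by $\|\sigma^{p-1}U_k\|_{L^{(2^*)'}}\|\nabla\rho_1\|_{L^2}$; since $\sigma^{p-1}\le\sum_i U_i^{p-1}$ (as $p-1\in(0,1]$) and the exponent $(p-1+1)\cdot\frac{2n}{n+2}=2^*$ makes each $\|U_i^{p-1}U_k\|_{L^{(2^*)'}}$ a scale-invariant constant, this piece is $\lesssim Q^2+\|f\|_{H^{-1}}=o(Q)+\|f\|_{H^{-1}}$. For the $\rho_0$ piece, the orthogonality removes the diagonal:
\[
\int\sigma^{p-1}\rho_0 Z_k^{n+1}=\int(\sigma^{p-1}-U_k^{p-1})\rho_0 Z_k^{n+1}.
\]
Subadditivity gives $|\sigma^{p-1}-U_k^{p-1}|\le\sum_{i\neq k}U_i^{p-1}$; combining with $|\rho_0|\le W=\sum_{j,\star}w_{j,\star}$ and $|Z_k^{n+1}|\le U_k$ reduces matters to bounding $\int U_i^{p-1}U_k w_{j,\star}$ with $i\neq k$. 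Each such integrand mixes (at least) two distinct bubbles, so the integrals pick up an interaction factor (a power of $q_{ik}$) beyond the $R^{2-n}\approx Q$ carried by $w_{j,\star}$; a case analysis paralleling Lemma \ref{lem:s-U-rho}---splitting by bubble-tower versus bubble-cluster behavior of $\{i,k\}$ and by inner/outer weight $w_{j,\star}$---shows each piece is $o(Q)$.

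For $\int|\rho|^p|Z_k^{n+1}|$, write $|\rho|^p\le 2^{p-1}(|\rho_0|^p+|\rho_1|^p)$ and $|Z_k^{n+1}|\le U_k$. Since $(2^*/p)'=2^*$, H\"older and Sobolev give $\int|\rho_1|^p U_k\le\|\rho_1\|_{L^{2^*}}^p\|U_k\|_{L^{2^*}}\lesssim(Q^2+\|f\|_{H^{-1}})^p\le o(Q)+\|f\|_{H^{-1}}$ (using $Q<1$, $\|f\|_{H^{-1}}<1$ and $p>1$). For the $\rho_0$ part, $p\in(1,2]$ yields $W^p\le C_\nu\sum_{i,\star}w_{i,\star}^p$, and it suffices to show $\int w_{i,\star}^p U_k=o(Q)$ in each case. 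The diagonal case $i=k$ scales as $R^{(2-n)p}=Q^p=o(Q)$ since $Q^{p-1}\to 0$, while the off-diagonal cases $i\neq k$ gain a further interaction decay factor and are also $o(Q)$; these are routine scale computations of the same flavor as \eqref{rho0-L2-2}--\eqref{rho0-L2-4}.

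The principal obstacle is the triple-bubble bookkeeping for the $\rho_0$ contribution to the first integral: one must verify uniformly---across the full range of bubble-tower, bubble-cluster, and mixed configurations---that every off-diagonal weighted integral $\int U_i^{p-1}U_k w_{j,\star}$ (with $i\neq k$) is strictly $o(Q)$. This is lengthy but entirely parallel to the estimates already assembled in Lemma \ref{lem:s-U-rho}, so the proof amounts to invoking those computations and packaging the pieces together.
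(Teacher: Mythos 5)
Your overall plan---decompose $\rho=\rho_0+\rho_1$, kill the $\rho_1$ pieces with H\"older--Sobolev via $\|\nabla\rho_1\|_{L^2}\lesssim Q^2+\|f\|_{H^{-1}}$, and treat the $\rho_0$ pieces via the orthogonality and the pointwise bound $|\rho_0|\le CW$---is exactly the paper's. The two sub-steps where you diverge deserve comment, and they go in opposite directions.

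For $\int\sigma^{p-1}\rho_0 Z_k^{n+1}$, your bound $\sigma^{p-1}-U_k^{p-1}\le\sum_{i\neq k}U_i^{p-1}$ is correct, but it leaves you with genuine triple-index integrals $\int U_i^{p-1}U_k\,w_{j,\star}$ ($i\neq k$, $j$ arbitrary). These are \emph{not} what Lemma~\ref{lem:s-U-rho} computes: that lemma handles $\int v_{i,\star}w_{j,\star}$, a two-index family with a different integrand, so invoking it as ``entirely parallel'' is a real gap rather than a packaging chore---you would have to run a new tower/cluster case analysis from scratch. The paper instead uses the identity-type inequality $(\sigma^{p-1}-U_k^{p-1})U_k\le\sum_i(\sigma^{p-1}-U_i^{p-1})U_i=\sigma^p-\sum_iU_i^p\lesssim V$, which (with $|Z_k^{n+1}|\lesssim U_k$ and $|\rho_0|\le W$) reduces immediately to $\int VW$, already estimated in \eqref{intVW}, \eqref{rho0-L2-4}, \eqref{rho0-L2-5} by Lemmas~\ref{lem:s-U-rho} and~\ref{lem:6-dim}. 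That one-line observation is the missing ingredient; without it, you owe the reader Appendix-B-style computations you haven't set up.

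For $\int|\rho|^p\,|Z_k^{n+1}|$, on the other hand, your route is genuinely sharper than the paper's. The paper applies H\"older--Sobolev to the full $\rho$ and lands on $\|\nabla\rho_0\|_{L^2}^{p}+\|\nabla\rho_1\|_{L^2}^{p}$; since $\|\nabla\rho_0\|_{L^2}\approx Q^{p/2}$, the $\rho_0$ part is only $Q^{p^2/2}$, and $p^2/2\le 1$ once $n\ge 12$, so the claimed $o(Q)$ does not actually follow from that computation in very high dimensions. Your use of the pointwise bound---$\int W^pU_k\lesssim Q^p$ (the diagonal term dominates and scales as $R^{(2-n)p}$; the off-diagonal terms gain extra decay)---delivers $Q^p=o(Q)$ uniformly in $n\ge 6$, which is precisely the flavor of improvement the paper itself advocates in the remark after \eqref{q-f-tmp}. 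So on this half you have implemented the ``better control of $\rho$'' philosophy more consistently than the paper's own proof of the lemma does.
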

\begin{proof}
Notice $\rho=\rho_0+ \rho_1$. Then
\begin{align}\label{srZ}
    \int \sigma^{p-1} \rho Z_k^{n+1}=\int \sigma^{p-1} \rho_0 Z_k^{n+1}+\int \sigma^{p-1} \rho_1 Z_k^{n+1}.
\end{align}
By H\"older inequality and Sobolev inequality
\[\left|\int \sigma^{p-1}\rho_1 Z_k^{n+1}\right|\leq \|\rho_1\|_{L^{2^*}}\|\sigma^{p-1}Z_k^{n+1}\|_{L^{(2^*)'}}\lesssim \|\nabla\rho_1\|_{L^2}\lesssim Q^2+\|f\|_{H^{-1}}.\]
It remains to consider the first term on the RHS of \eqref{srZ}. By the orthogonality condition of $\rho_0$, similar to \eqref{sigma(p-1)-z-phi}, one has
\begin{align}
\left|    \int \sigma^{p-1}\rho_0Z_k^{n+1}\right|=\left|\int (\sigma^{p-1}-U_k^{p-1})\rho_0Z_k^{n+1}\right|= o(Q).
\end{align}
Finally, by H\"older inequality and Sobolev inequality
\begin{align}
    \left|\int |\rho|^pZ_k^{n+1}\right|\leq \|\rho\|_{L^{2^*}}^p\lesssim \|\nabla \rho\|_{L^2}^p\lesssim \|\nabla\rho_0\|_{L^2}^p+\|\nabla\rho_1\|_{L^2}^p\lesssim o(Q)+\|f\|_{H^{-1}}^p.
\end{align}
\end{proof}

\section{Sharp example}
Let us consider the two functions $\tilde U_{1}:=U\left[-R e_{1} ; 1\right], \tilde U_{2}:=U\left[{Re}_{1} ; 1\right]$ where $R\gg 1$. One can define $\tilde \sigma=\tilde U_1+\tilde U_2$ and construct norms $\|\cdot\|_{\tilde *}$ and $\|\cdot\|_{\widetilde{**}}$ as \eqref{def:**}.

By Proposition \ref{prop:rho0-exist}, we can find a unique solution $\tilde \rho$ and a family of scalars $(c_a^i)$ such that\footnote{In this special two bubbles $\tilde U_1,\tilde U_2$ setting, one can use potential estimate to prove directly as in \cite{wei2010infinitely}.}
\begin{align}\label{t-rho-eq}
\begin{cases}
\Delta\left(\tilde U_{1}+\tilde U_{2}+\tilde \rho\right)+\left(\tilde U_{1}+\tilde U_{2}+\tilde \rho\right)^{p}+\sum_{j,a} \tilde c_{a}^j \tilde U_{j}^{p-1} \tilde Z_{j}^a=0, \\
\int \nabla \tilde Z_{j}^a\cdot \nabla \tilde\rho=0,\quad j=1,2, \, a=1,\cdots, n+1.
\end{cases}
\end{align}
Here $\tilde Z^a_j$ are the corresponding ones in \eqref{eq:Z} for $\tilde U_1$ and $\tilde U_2$. It follows from Lemma \ref{lem:c_jb}, Proposition \ref{prop:rho0-exist} and Proposition \ref{prop:L2-rho0} that
\begin{align}\label{ex-t-rho}
    \sum_{j, a}\left|\tilde c_{a}^j\right| \lesssim Q\approx  R^{2-n},\quad \|\tilde \rho\|_{\tilde *}\leq C(n,\nu),\quad  \left\|\nabla \tilde \rho\right\|_{L^{2}} \lesssim R^{-\frac{p}{2}(n-2)}.
\end{align}
Now let $u:=\tilde U_{1}+\tilde U_{2}+\tilde \rho$. Then
\[
\Delta u+|u|^{p-1} u=-\sum_{a,j} \tilde c_{a}^j \tilde U_{j}^{p-1} \tilde Z_{j}^a=f.
\]
It is easy to see that
\begin{align}\label{2.5}
\|f\|_{H^{-1}} \approx \sum_{j, k}\left|\tilde c_{a }^j\right| \lesssim R^{2-n}.
\end{align}
Consider
\begin{align}\label{min-opt}
\inf_{\substack{z_1,z_2\in\mathbb{R}^n\\ \l_1>0,\l_2>0}} \left\|\nabla\left(u-\sum_{j=1,2} U\left[z_{j} ; \lambda_{j}\right]\right)\right\|_{L^{2}}
\end{align}
which can be attained by some
\[
U_1:=U\left[z_{1} ; \lambda_{1}\right],\quad  U_2:=U\left[z_{2} ; \lambda_{2}\right].
\]
Necessary we should have
\[
\left\|\nabla\left(u- U_1-U_2\right)\right\|_{L^{2}} \leq\left\|\nabla(u-\tilde U_{1}-\tilde U_{2})\right\|_{L^{2}} \lesssim R^{-\frac{p}{2}(n-2)}.
\]
Hence
\begin{align}\label{tU-U}
    \left\|\nabla\left(\tilde U_{1}+\tilde U_{2}-U_1-U_2\right)\right\|_{L^{2}} \lesssim R^{-\frac{p}{2}(n-2)}.
\end{align}
This implies that: up to some reordering
\begin{align}\label{ljz1}
\lambda_{j}=1+o_R(1),\quad  z_{1}=-(R+o_R(1)) e_{1}, \quad z_{2}=(R+o_R(1)) e_{1}.
\end{align}
Here $o_R(1)$ means a quantity goes to $0$ when $R\to \infty$.
Denote $\rho=u-U_1-U_2$, then \eqref{t-rho-eq} means that $\rho$ satisfies
\begin{align}\label{opt-rho}
\begin{cases}
\Delta\rho+\left({U}_{1}+{U}_{2}+{\rho}\right)^{p}-U_1^p-U_2^p+\sum_{j, a} \tilde c_{a }^j \tilde U_{j}^{p-1} \tilde Z_{j}^a=0 \\
\int {U}_{j}^{p-1} {Z}_{j}^a {\rho}=0,\quad j=1,2,\, a=1,\cdots,n+1.
\end{cases}
\end{align}
Denote $\sigma=U_1+U_2$,  $y_1=x-z_1$
and $y_2=x-z_2$.
Because of \eqref{ljz1}, the norm $\|\cdot\|_{\tilde *}\approx \|\cdot\|_{*}$ and $\|\cdot\|_{\widetilde{**}}\approx\|\cdot\|_{**}$ with $\|\cdot\|_{*}$  and $\|\cdot\|_{**}$ defined in \eqref{def:**}. Then \eqref{ex-t-rho} implies \[\|\rho\|_{*}\leq C(n,\nu),\quad \|\nabla \rho\|_{L^2}\lesssim R^{-\frac{p}{2}(n-2)}.\]

In order to get a lower bound of  $\|\nabla\rho\|_{L^2}$, we need to obtain the precise first order term of $\rho$. The idea is to match the inner error with the outer error up to leading order.

Suppose $\phi_{1}(x)$ satisfies the following equation
\begin{align}\label{v1-opt}
    \begin{cases}
    \Delta \phi+pU_1^{p-1}\phi+R^{n-2}{U_1^{p-1}}U_2\chi_{\{|x-z_1|<R\}}+R^{n-2}\sum_{i,a}\hat c_a^iU_i^{p-1}Z_i^a=0,\\
    \int U_i^{p-1}Z_i^a \phi=0, \quad i=1,2;\, a=1,\cdots,n+1.
    \end{cases}
\end{align}
Similarly let $\phi_{2}(x)$ satisfy the following equation
\begin{align}\label{v2-opt}
    \begin{cases}
    \Delta \phi+pU^{p-1}_2\phi+R^{n-2}{U_2^{p-1}}U_1\chi_{\{|x-z_2|<R\}}+R^{n-2}\sum_{a}\bar c_a^iU^{p-1}_iZ_i^a=0,\\
    \int U_i^{p-1}Z_i^a \phi=0,\quad i=1,2;\, a=1,\cdots,n+1.
    \end{cases}
\end{align}
Since $R^{n-2}U_1^{p-1}U_2=\alpha_n ^pR^{n-2}\langle2R\rangle^{2-n}\langle x-z_1\rangle^{-4}$, then it is easy to show that
\begin{align}
    \phi_{1}(x)=&A_0|x-z_1|^{-2}+o_R(|x-z_1|^{-2}),\quad 1\ll |x-z_1|\leq R, \\
    \phi_2(x)=&A_0|y- z_2|^{-2}+o_R(|x-z_2|^{-2}),\quad 1\ll |x-z_2|\leq R,
\end{align}
where
\[A_0= \frac{2^{1-n}\alpha_n^{p}}{n-4} , \quad \alpha_n=[n(n-2)]^{\frac{n-2}{4}}.\]
Suppose $\phi_3$ satisfies
\begin{align}\label{v3-opt}
    \Delta \phi +\left(\frac{\al_n}{ |x-\hat{z}_1|^{n-2}} +\frac{\al_n}{|x-\hat{z}_2|^{n-2}}\right)^{p} - \frac{\al_n^p}{ |x-\hat{z}_1 |^{p(n-2)}} -\frac{\al_n^p}{ |x-\hat{z}_2 |^{p(n-2)}} =0
\end{align}
with $\phi(\infty)=0$. Here $\hat z_1=z_1/R$ and $\hat z_2=z_2/R$. Then it is easy to know
\begin{align}
\label{vinfty}
    \phi_3=& A_0 |x-\hat{z}_1|^{-2}+o_R(|x-\hat z_1|^{-2}),\quad  \text{near } \hat z_1,\\
    \phi_3=& A_0 |x-\hat{z}_2|^{-2}+o_R(|x-\hat z_2|^{-2}),\quad  \text{near } \hat z_2.
\end{align}

Fix $s>0$ small.
It is easy to see that in the region $ |y_i|\sim R^{1-s}$,   the two functions $ R^{2-n} \phi_{i} (y_i) $ and $ R^{-n} \phi_{3} (\frac{x}{R})$ differ by
\begin{align}\label{match}
    R^{2-n} \phi_{i} ( y_i)- R^{-n} \phi_{3} (\frac{x}{R})=R^{2-n}o_R(\langle y_i\rangle^{-2}),
\end{align}
for $i=1,2$.
Now we can prove the Theorem \ref{thm:7counter} in the introduction.
\begin{proof}[Proof of Theorem \ref{thm:7counter}]
Let $  \eta(t)=1$ for $ t<R^{1-s}$ and $ \eta(t)=0$ for $ t>2R^{1-s}$. Denote $\eta_1=\eta(|y_1|)$, $\eta_2=\eta(|y_2|)$ and
\begin{align}\label{phi-f}
    \phi(x)=R^{2-n}[\phi_{1}(y_1)\eta_1+\phi_{2}(y_2)\eta_2]+R^{-n}\phi_{3}(\frac{x}{R})(1-\eta_1-\eta_2)
\end{align}
where $y_1=x-z_1$ and $y_2=x-z_2$.
Then one can compute
\begin{align*}
    \Delta \phi+p\sigma^{p-1}\phi+\sigma^p-U_1^p-U_2^p=-\sum_{a,i}(\eta_1\hat c_a^i+\eta_2\bar c_a^i)U_i^{p-1}Z_i^a+J_1+J_2+J_3+J_4,
\end{align*}
where
\begin{align*}
    J_1=&p(\sigma^{p-1}-U_1^{p-1})R^{2-n}\phi_{1}(y_1)\eta_1+p(\sigma^{p-1}-U_2^{p-1})R^{2-n}\phi_{2}(y_2)\eta_2,\\
    J_2=&[\sigma^p-U_1^p-U_2^p]-U_1^{p-1}U_2\eta_1-U_2^{p-1}U_1\eta_2,\\
    &-(1-\eta_1-\eta_2)[(\frac{\alpha_n}{|y_1|^{n-2}}+\frac{\alpha_n}{|y_2|^{n-2}})^p-\frac{\alpha_n^p}{|y_1|^{p(n-2)}}-\frac{\alpha_n^p}{|y_2|^{p(n-2)}}]\\
    J_3=&\nabla\eta_1\cdot\nabla(R^{2-n}\phi_{1}(y_1)-R^{-n}\phi_{3}(\frac{x}{R}))+\Delta \eta_1(R^{2-n}\phi_{1}(y_1)-R^{-n}\phi_{3}(\frac{x}{R})),\\
    J_4=&\nabla\eta_2\cdot \nabla(R^{2-n}\phi_{2}(y_2)-R^{-n}\phi_{3}(\frac{x}{R}))+\Delta \eta_2 (R^{2-n}\phi_{2}(y_2)-R^{-n}\phi_{3}(\frac{x}{R})).
\end{align*}
Then combine with \eqref{opt-rho},
\begin{align}\label{rho-phi-1}
   \begin{cases} \Delta (\rho-\phi)+p\sigma^{p-1}(\rho-\phi)=h+\sum_{a,i}(\eta_1\hat c_a^i+\eta_2\bar c_a^i-\tilde c_a^i)U_i^{p-1}Z_i^a,\\
   \int U_i^{p-1}Z_i^a\phi=0,\quad i=1,2,\, a=1,\cdots,n+1,
   \end{cases}
\end{align}
where
\[h=[-(\sigma+\rho)^p+\sigma^p+p\sigma^{p-1}\rho]-\sum_{i=1}^4J_i+\sum_{i,a}\tilde c^i_a(U_i^{p-1}Z^a_i-\tilde U_i^{p-1}\tilde Z^a_i).\]
Now let us show $\|h\|_{**}=o_R(1)$. The first term can be controlled similar to \eqref{n1phi} to be
\begin{align}
    \|(\sigma+\rho)^p-\sigma^p-p\sigma^{p-1}\rho\|_{**}\lesssim \||\rho|^{p}\|_{**}\lesssim o_R(1).
\end{align}
Now it is easy to see
\begin{align*}
    \|J_1\|_{**}=\sup_{y_1,y_2\in \mathbb{R}^n} \sum_{i=1,2} |\sigma^{p-1}-U_i^{p-1}|\langle y_i\rangle^4|\phi_{i}(y_i)|\eta_i\lesssim R^{-2}.
\end{align*}
To control $J_2$, we notice $U_2\lesssim R^{-s} U_1$ on $\eta_1>0$ and $U_1\lesssim R^{-s}U_2$ on $\eta_2>0$. Then
\begin{align*}
    \|J_2\|_{**}
    \lesssim&\sup_{y_1} R^{n-2}\langle y_1\rangle^4[\sigma^p-U_1^p-U_2^p-U_1^{p-1}U_2]\eta_1\\
    &+\sup_{y_2} R^{n-2}\langle y_2\rangle^4[\sigma^p-U_1^p-U_2^p-U_2^{p-1}U_1]\eta_2\\
    &+\sup R^4(|y_1|^{n-2}+|y_2|^{n-2})\left[\sigma^{p}-\left(\frac{\alpha_n}{|y_1|^{n-2}}+\frac{\alpha_n}{|y_2|^{n-2}}\right)^p\right.\\
    &\hspace{3cm}\left.-U_1^p-U_2^p+\frac{\alpha_n^p}{|y_1|^{p(n-2)}}+\frac{\alpha_n^p}{|y_2|^{p(n-2)}}\right](1-\eta_1-\eta_2)\\
    \lesssim &R^{-4s}+o_R(1).
\end{align*}
For $J_3$ and $J_4$, we need to use the fact in \eqref{match}
\begin{align*}
    \|J_3\|_{**}=o_R(1),\quad \|J_4\|_{**}=o_R(1).
\end{align*}
Because of \eqref{ex-t-rho} and \eqref{tU-U}, then
\begin{align}\label{c-diff}
    \left\| \sum_{i,a}\tilde c^i_a(U_i^{p-1}Z^a_i-\tilde U_i^{p-1}\tilde Z^a_i)\right\|_{**}\lesssim o_R(1)R^{2-n}.
\end{align}
Combine the previous estimates, we get $\|h\|_{**}=o_R(1)$.
Therefore by Lemma \ref{lem:prior-est},
$$ \| \rho -\phi\|_{*} \leq o_R(1). $$
This gives a lower bound on $\rho$, with error $o_R(1)$. From this we can multiply \eqref{opt-rho} by $\rho$ and integrate
\begin{align}\label{ex-nrho}
    \int |\nabla \rho|^2 =& p \int \sigma^{p-1} \rho^{2} + \int (\sigma^p -U_1^p-U_2^p) \rho  + \sum_{i,a}\tilde c_a^j\int \tilde U_j^{p-1}\tilde Z^a_j\rho.
\end{align}
It follows from \eqref{ljz1} and \eqref{opt-rho} that
\begin{align*}
    \left|\int\tilde U^{p-1}_j\tilde Z^a_j\rho\right|=\left|\int (\tilde U^{p-1}_j\tilde Z^a_j- U^{p-1}_j Z^a_j)\rho\right|\lesssim& o_R(1)\int WV
    \lesssim \begin{cases}o(R^{-n-2}), &n\geq 7,\\ o(R^{-8}\log R), &n=6,\end{cases}
\end{align*}
where we have used the estimates in \eqref{rho0-L2-3} and \eqref{rho0-L2-5}. Similarly using $\|\rho-\phi\|_{*}= o_R(1)$, we can derive
\begin{align*}
    \left|\int (\sigma^p-U_1^p-U_2^p)(\rho-\phi)\right|\lesssim o_R(1)\int VW\lesssim \begin{cases}o(R^{-n-2}), &n\geq 7,\\ o(R^{-8}\log R),& n=6.\end{cases}
\end{align*}
However,  direct computations shows, using \eqref{phi-f},
\begin{align*}
    \int (\sigma^p-U_1^p-U_2^p)\phi\gtrsim \int_{R^{1-s}\leq |y_1|\leq R}U_1^pU_2\phi=&R^{4-2n}\int_{R^{1-s}\leq |y_1|\leq R}\frac{dx}{\langle y_1\rangle^6}\\
    \gtrsim&\begin{cases}
    R^{-n-2}\approx Q^p,& n\geq 7,\\ R^{-8}\log R\approx Q^2|\log Q|,&n=6.
    \end{cases}
\end{align*}
Plugging in the above facts to \eqref{ex-nrho} to get
\[\|\nabla \rho\|_{L^2}^2\geq \begin{cases}
     Q^p,& n\geq 7,\\ Q^2|\log Q|,&n=6.
    \end{cases}\]
Since $\|f\|_{H^{-1}}\lesssim R^{2-n}\approx Q$, the proof is done.
\end{proof}

\section*{Acknowledgement} The research of  B. Deng is supported by China Scholar Council and Natural Science Foundation of China (No. 11721101). The research of L. Sun and J. Wei is partially supported by NSERC of Canada.

\appendix
\section{Some useful estimates}
This appendix contains some useful estimates involved Talenti bubbles and their derivatives.
\begin{lemma}\label{lem:B-inf}
Let $\alpha>\beta>1$ and $\alpha+\beta=2^*$,
\[\int U_i^\beta\inf (U_i^\alpha,U_j^\alpha)=O(q_{ij}^{\frac{n}{n-2}}|\log q_{ij}|)\]
\end{lemma}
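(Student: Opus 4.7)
The plan is to split the integration domain based on which of $U_i, U_j$ is pointwise larger, then estimate each piece by a direct computation after an appropriate rescaling. By the conformal scale invariance of both the integrand and $q_{ij}$, I would rescale so that $U_i = U[0,1]$, and then analyze the integral as a function of the parameters $(z_j, \lambda_j)$ of the other bubble. Write the integral as $I_1 + I_2$, where $I_1 := \int_{\{U_i \leq U_j\}} U_i^{2^{*}}\,dx$ and $I_2 := \int_{\{U_j < U_i\}} U_i^\beta U_j^\alpha\,dx$.

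For $I_1$, I would solve $U_i = U_j$ explicitly to identify the set $\{U_i \leq U_j\}$: in the bubble tower regime ($|z_j|\lesssim \lambda_j^{-1/2}$, $\lambda_j \geq 1$) this set is a small ball of radius $\sim 1/\sqrt{\lambda_j}$ near $z_j$; in the bubble cluster regime ($\sqrt{\lambda_j}|z_j|$ large) it is (a subset of) a half-space along the perpendicular bisector on which $U_i \lesssim R_{ij}^{-(n-2)}$; the symmetric subcase $\lambda_j < 1$ is handled by swapping the roles of the two bubbles. In each configuration a direct integration of $U_i^{2^{*}}$ over the set gives $I_1 \lesssim R_{ij}^{-n} = q_{ij}^{n/(n-2)}$. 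For $I_2$, I would change variables to $y = \lambda_j(x-z_j)$, turning $U_j$ into the unit bubble and the region $\{U_j < U_i\}$ into the exterior of a ball of radius $\sim R_{ij}$ in $y$. Split this exterior into an annulus $\{R_{ij} \leq |y| \leq \lambda_j\}$ and a far piece $\{|y| > \lambda_j\}$: on the annulus the integrand simplifies (using $\lambda_j^2 + |y|^2 \sim \lambda_j^2$ and $1+|y|^2 \sim |y|^2$) to a constant multiple of $\lambda_j^{-\beta(n-2)/2}|y|^{-\alpha(n-2)}$, and since $\alpha > n/(n-2)$ the resulting $y$-integral converges at infinity and is dominated by its lower endpoint $|y|\sim R_{ij}$, contributing $\lesssim R_{ij}^{-n}$; on the far piece both bubbles decay like $|y|^{-(n-2)}$ and the integrand is $\sim |y|^{-2n}$, whose integral is strictly smaller.

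Adding the pieces yields $I \lesssim q_{ij}^{n/(n-2)}$, which is actually stronger than the stated bound. The logarithmic factor $|\log q_{ij}|$ serves as a safety margin for borderline exponent configurations: as $\alpha \to (n/(n-2))^{+}$ the annular integrand approaches $r^{-1}$ and contributes a $\log(\lambda_j/R_{ij}) \lesssim |\log q_{ij}|$ term; keeping this factor ensures the constant in front of $q_{ij}^{n/(n-2)}$ depends only on $n$ and not on how close $\alpha$ or $\beta$ is to the borderline value $n/(n-2)$. The main obstacle will be unifying the geometry of the crossover surface $\{U_i = U_j\}$ across bubble tower, bubble cluster, and mixed regimes so that a single argument handles all configurations; performing the computation throughout in the rescaled coordinate $y = \lambda_j(x-z_j)$, where the distinction between these regimes reduces to the location of the center $-\lambda_j z_j$ of $U_i$ relative to the unit scale of $U_j$, is what makes this uniform.
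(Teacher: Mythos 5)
The paper does not actually prove this lemma itself --- it cites estimate (E4) of Bahri's \emph{Critical Points at Infinity in Some Variational Problems} --- so a self-contained direct computation such as yours is a legitimate alternative, and the overall scheme (split on which of $U_i$, $U_j$ is pointwise smaller, so that the integrand becomes $U_i^{2^{*}}$ on $\{U_i\le U_j\}$ and $U_i^\beta U_j^\alpha$ on $\{U_j<U_i\}$, then rescale and power-count) is the right one. But the explicit $I_2$ estimate you write down is only valid when the center of $U_i$ in the $y=\lambda_j(x-z_j)$ frame, namely $\zeta:=-\lambda_j z_j$, is essentially at the origin, i.e.\ the bubble-tower case. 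The annulus bound uses $1+|x|^2\approx(\lambda_j^2+|y|^2)/\lambda_j^2$, which is really $(\lambda_j^2+|y-\zeta|^2)/\lambda_j^2$; and the far-piece bound asserts $U_i\sim|y|^{-(n-2)}$. Both fail in the cluster and mixed regimes. Take $\lambda_j=1$, $|z_j|=R_{ij}$ large: your annulus $\{R_{ij}\le|y|\le\lambda_j\}$ is empty, all of $\{U_j<U_i\}$ falls into your far piece, and near $y=\zeta=-z_j$ (that is, $x\approx0$) one has $U_i\approx 1$, not $|y|^{-(n-2)}$, so the integrand there is $\sim R_{ij}^{-\alpha(n-2)}$, not $|y|^{-2n}$. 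The conclusion $I_2\lesssim R_{ij}^{-n}$ does hold, but the mechanism is that $\alpha(n-2)>n$ makes $U_j^\alpha$ integrable across the order-one core of $U_i$; this is not captured by a uniform annulus-plus-tail power count in $|y|$, and a further decomposition of $\{U_j<U_i\}$ keyed to $|y-\zeta|$ is genuinely needed. You flag exactly this as "the main obstacle," but the argument you then sketch does not discharge it.

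A smaller issue: in the $I_1$ step, the claim "$U_i\lesssim R_{ij}^{-(n-2)}$ on $\{U_i\le U_j\}$" is false in the mixed regime $\lambda_j\gg 1$, $|z_j|\ge 1$: there the set $\{U_i\le U_j\}$ is a ball of radius $\sim |z_j|/\sqrt{\lambda_j}$ near $z_j$, on which $\sup U_i\approx|z_j|^{-(n-2)}=\lambda_j^{(n-2)/2}R_{ij}^{-(n-2)}$. The bound $I_1\lesssim R_{ij}^{-n}$ is still correct because the volume of that ball, $\sim R_{ij}^n\lambda_j^{-n}$, compensates, but the intermediate inequality as you state it is wrong and the supporting calculation needs to be redone uniformly over the four regimes rather than in the two extreme cases you name.
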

\begin{proof}
See the proof in \cite[E4]{bahri1989critical}.
\end{proof}
\begin{lemma}\label{lem:U12-est}
For any two bubbles, there exists $C_n$ such that
\[\int_{\Rn}U[z_i,\l_i]^{p}\l_j\partial_{\l_j}U[z_j,\l_j]=-C_n \left(q_{ij}^{-\frac{2}{n-2}}-2\frac{\l_i}{\l_j} \right)q_{ij}^{\frac{n}{n-2}}+O(q_{ij}^{\frac{n}{n-2}}\log q_{ij}^{-1}) \]
\end{lemma}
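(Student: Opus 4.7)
The starting identity is that, since $Z_j^{n+1}=\l_j\partial_{\l_j}U_j$ and $U_i$ is independent of $\l_j$, differentiation under the integral sign gives
\[
\int U_i^{p}Z_j^{n+1}\,dx \;=\; \l_j\,\frac{d}{d\l_j}\!\int U_i^{p}U_j\,dx \;=:\; \l_j F'(\l_j).
\]
My plan is first to obtain a sharp leading-order expansion of $F(\l_j)$ and then to differentiate in $\l_j$, with the differentiation handled in a rigorous way rather than by naive term-by-term differentiation of an asymptotic remainder.

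For the expansion of $F$ I would rely on the companion two-bubble interaction estimate (the preceding Lemma~\ref{lem:2int-est}) in its sharp form,
\[
F(\l_j) \;=\; \bar c_{n}\,q_{ij}\;+\;O\!\bigl(q_{ij}^{n/(n-2)}|\log q_{ij}|\bigr),\qquad \bar c_{n}\;:=\;\alpha_{n}^{p+1}\!\int_{\Rn}\!\frac{dy}{(1+|y|^{2})^{(n+2)/2}},
\]
where $\bar c_n$ appears as the leading moment after rescaling to the more peaked bubble. The $\l_j$-dependence of $q_{ij}$ is completely explicit from $q_{ij}^{-2/(n-2)}=\l_i/\l_j+\l_j/\l_i+\l_i\l_j|z_i-z_j|^{2}$; a direct computation gives
\[
\l_j\partial_{\l_j}\!\bigl(q_{ij}^{-2/(n-2)}\bigr)\;=\;-\tfrac{\l_i}{\l_j}+\tfrac{\l_j}{\l_i}+\l_i\l_j|z_i-z_j|^{2}\;=\;q_{ij}^{-2/(n-2)}-2\tfrac{\l_i}{\l_j},
\]
so the chain rule yields $\l_j\partial_{\l_j}q_{ij}=-\tfrac{n-2}{2}\bigl(q_{ij}^{-2/(n-2)}-2\l_i/\l_j\bigr)q_{ij}^{n/(n-2)}$. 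Setting $C_n:=\tfrac{n-2}{2}\bar c_n$ reproduces exactly the main term in the statement.

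The main obstacle is precisely that an asymptotic remainder is not in general differentiable term by term, so one cannot rigorously deduce the desired $O(q_{ij}^{n/(n-2)}|\log q_{ij}|)$ error on $\l_j F'(\l_j)$ by a formal differentiation of $F$. To bypass this I plan to re-run the proof of Lemma~\ref{lem:2int-est} with $U_j$ replaced by $Z_j^{n+1}$. The bound $|Z_j^{n+1}|\lesssim U_j$ and the fact that $Z_j^{n+1}$ has essentially the same radial profile as $U_j$ let the region decomposition (a neighborhood of the more peaked center, a neighborhood of the less peaked one, and the far region) carry over verbatim; the leading contribution is then extracted by Taylor-expanding the less peaked factor at the center of the more peaked one, using the explicit value
\[
Z_j^{n+1}(z_i)\;=\;\tfrac{n-2}{2}\alpha_n\l_j^{(n-2)/2}\frac{1-\l_j^{2}|z_i-z_j|^{2}}{(1+\l_j^{2}|z_i-z_j|^{2})^{n/2}},
\]
combined with a symmetric treatment in the opposite regime $\l_j\geq\l_i$. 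Summing the two regimes produces precisely the algebraic factor $(q_{ij}^{-2/(n-2)}-2\l_i/\l_j)\,q_{ij}^{n/(n-2)}$ out of the leading order, while the borderline logarithmic factor in the error is inherited, exactly as in Lemma~\ref{lem:2int-est}, from the intermediate annulus in which neither bubble dominates.
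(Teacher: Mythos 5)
The paper's own ``proof'' of Lemma~\ref{lem:U12-est} is simply a citation to the corresponding estimate in Bahri's book (labelled F16 there); no computation is carried out. You instead attempt an outline of a direct derivation, which is a genuinely different route.

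Your structural observations are all correct: the identity $\int U_i^p Z_j^{n+1} = \l_j \partial_{\l_j}\int U_i^p U_j$, the algebraic identity
$\l_j\partial_{\l_j}\bigl(q_{ij}^{-2/(n-2)}\bigr)=q_{ij}^{-2/(n-2)}-2\l_i/\l_j$, the warning that an asymptotic expansion of $F(\l_j)$ cannot be differentiated term by term, and the explicit formula for $Z_j^{n+1}(z_i)$ all check out, and the proposed fix --- replace $U_j$ by $Z_j^{n+1}$ in a Bahri-style region analysis using $|Z_j^{n+1}|\lesssim U_j$ --- is the right way to make the statement rigorous. What your proposal buys over the paper is a roadmap that someone could in principle follow without owning Bahri's book.

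That said, there are three points you should tighten. First, you invoke Lemma~\ref{lem:2int-est} ``in its sharp form'' $F(\l_j)=\bar c_n q_{ij}+O\bigl(q_{ij}^{n/(n-2)}|\log q_{ij}|\bigr)$, but the paper's Lemma~\ref{lem:2int-est} only records the two-sided bound $\int U_1^\alpha U_2^\beta\thickapprox q_{12}^{\min(\alpha,\beta)}$, not an asymptotic with an error; you are therefore quietly assuming a stronger external statement than the paper provides, so this step also needs an independent derivation or a precise reference. Second, the phrase ``summing the two regimes'' is misleading: for any given bubble pair one is either in the regime $\l_i\geq\l_j$ or in $\l_j\geq\l_i$; the claim is that in each regime separately the expansion matches the single algebraic expression $-C_n\bigl(q_{ij}^{-2/(n-2)}-2\l_i/\l_j\bigr)q_{ij}^{n/(n-2)}$, not that one adds the two contributions. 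Third, and most substantively, the error $O\bigl(q_{ij}^{n/(n-2)}|\log q_{ij}|\bigr)$ is the entire content of the lemma and your proposal defers it to a ``re-run'' that is never actually written; in particular one must check that the $\l_j^2/\l_i^2$ correction inside $q_{ij}^{-2/(n-2)}$ (which is $O(1)$ in the cluster regime $\l_i\approx\l_j$) is absorbed at this order, and that the intermediate-annulus contribution produces exactly the borderline logarithm and nothing worse. These are doable but not automatic, and as it stands the proposal is a plan for a proof rather than a proof.
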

\begin{proof}
See the proof in \cite[F16]{bahri1989critical}. Moreover, if $\l_i\leq \l_j$ then RHS $\approx -q_{ij}$ when $q_{ij}\ll 1$.
\end{proof}


\begin{lemma}\label{lem:2int-est}
Given $n\geq3$, let $U_i=U[z_i,\lambda_i]$, $i=1,2$, be two bubbles. Then, for any fixed $\varepsilon>0$ and any non-negative exponents such that $\alpha+\beta=2^*$, it holds
\begin{align}
    \int_{\mathbb{R}^n}U_1^{\alpha}U_2^{\beta}\thickapprox_{n,\varepsilon}
    \begin{cases}
    q_{12}^{\min\left(\alpha,\beta\right)}\quad& \text{if}\quad  |\alpha-\beta|\geq\varepsilon,\\
    q_{12}^{\frac{n}{n-2}}\log\left(\frac{1}{q_{12}}\right) \quad& \text{if}\quad \alpha=\beta.
    \end{cases}
\end{align}
\end{lemma}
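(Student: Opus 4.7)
The plan is to reduce the integral to a canonical two-parameter form by scaling and then partition $\mathbb{R}^n$ into natural regions adapted to the two bubbles. WLOG $\lambda_1 \geq \lambda_2$. Substituting $y = \lambda_1(x-z_1)$, $\mu = \lambda_2/\lambda_1 \in (0,1]$, $\xi = \lambda_2(z_2-z_1)$, $a = \alpha(n-2)/2$, $b = \beta(n-2)/2$ (so $a+b = n$), a direct computation using the explicit form of $U_i$ gives
\[
\int_{\mathbb{R}^n} U_1^\alpha U_2^\beta \;\approx\; \mu^b \int_{\mathbb{R}^n} \frac{dy}{(1+|y|^2)^a\,(1+|\mu y - \xi|^2)^b},
\]
while the interaction parameter simplifies to $q_{12} = \mu^{(n-2)/2}(1+\mu^2+|\xi|^2)^{-(n-2)/2} \approx \mu^{(n-2)/2}(1+|\xi|^2)^{-(n-2)/2}$ under weak interaction. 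Thus $\mu^b(1+|\xi|^2)^{-b} \approx q_{12}^{\,2b/(n-2)} = q_{12}^{\beta}$ and symmetrically $\mu^a(1+|\xi|^2)^{-a} \approx q_{12}^{\alpha}$.

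I would then split the $y$-integration into three pieces: the core of the first bubble $R_1 = \{|y|\leq 1\}$, the core of the second bubble $R_2 = \{|\mu y-\xi|\leq 1\}$, and the transition $R_3 = \mathbb{R}^n\setminus(R_1\cup R_2)$. On $R_1$ the second factor is comparable to $(1+|\xi|^2)^{-b}$ and the first factor is $O(1)$, so the contribution is $\approx q_{12}^{\beta}$; on $R_2$, substituting $w = \mu y - \xi$ (Jacobian $\mu^{-n}$) yields the mirror contribution $\approx q_{12}^{\alpha}$. The heart of the matter is $R_3$: using $(1+|y|^2)^{-a}\approx \langle y\rangle^{-2a}$ and the fact that on the inner part $\{1\leq |y|\leq |\xi|/(2\mu)\}$ one has $|\mu y - \xi|\approx |\xi|$, the contribution reduces to
\[
\mu^b(1+|\xi|^2)^{-b}\int_1^{|\xi|/\mu} r^{\,n-1-2a}\,dr,
\]
which is $O(1)$ when $2a>n$, of order $\log(|\xi|/\mu) \approx \log(1/q_{12})$ when $2a=n$, and of order $(|\xi|/\mu)^{n-2a}$ when $2a<n$; the far outer part $\{|y|\gtrsim |\xi|/\mu\}$ is no worse than $R_2$.

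Assembling the three pieces settles both cases. When $\alpha=\beta=n/(n-2)$, i.e.\ $2a=n$, the $R_3$ contribution is $\approx q_{12}^{n/(n-2)}\log(1/q_{12})$, which dominates the two core contributions of size $q_{12}^{n/(n-2)}$, giving the claimed logarithmic asymptotic. When $|\alpha-\beta|\geq \varepsilon$, WLOG $\alpha>\beta$, so $2a \geq n + \varepsilon(n-2)/2$ and the $R_3$ integral is bounded by a constant $C(n,\varepsilon)$; the dominant contribution comes from $R_1$ (the one near the sharper bubble in the integrand), yielding the bound $q_{12}^{\beta} = q_{12}^{\min(\alpha,\beta)}$. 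The matching lower bound is immediate by restricting the integration to a fixed subset of $R_1$ (respectively to the annulus $\{1\leq |y|\leq |\xi|/(2\mu)\}$ in the logarithmic case), where the integrand is pointwise comparable to its claimed average value. The only delicate step is the uniformity of the $R_3$ constant in $\varepsilon$; this is just bookkeeping since $n-2a$ is strictly negative and quantitatively controlled by $\varepsilon$.
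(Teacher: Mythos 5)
Your outline has the right structure --- rescale to a canonical two--parameter integral, decompose $\mathbb{R}^n$ into the two bubble cores and a transition region, and compare each piece to powers of $q_{12}$ --- and the reduction to $\mu^b\int (1+|y|^2)^{-a}(1+|\mu y-\xi|^2)^{-b}\,dy$ with $q_{12}\approx(\mu/(1+|\xi|^2))^{(n-2)/2}$ is correct. The bubble--cluster regime $|\xi|\gtrsim 1$, where $R_1$, $R_2$, $R_3$ really are disjoint, is handled essentially as you describe. There are, however, two genuine gaps. First, you fix $\mu=\lambda_2/\lambda_1\in(0,1]$ (WLOG $\lambda_1\geq\lambda_2$) and later also take WLOG $\alpha>\beta$; these cannot be imposed simultaneously, since swapping $(U_1,\alpha)\leftrightarrow(U_2,\beta)$ also swaps $\lambda_1$ and $\lambda_2$. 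The subcase $\lambda_1\geq\lambda_2$, $\alpha<\beta$ (so $2a<n$, the dominant core is $R_2$, and $\int_1^{|\xi|/\mu}r^{n-1-2a}\,dr\approx(|\xi|/\mu)^{n-2a}$ rather than $O(1)$) is left untreated; one must separately verify $q_{12}^\beta(|\xi|/\mu)^{n-2a}\lesssim q_{12}^\alpha$.

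Second, and more substantively, the bubble--tower regime $\mu\ll1$, $|\xi|\lesssim 1$ is not handled: there $R_1\subset R_2$, so your three sets are not disjoint, and the ``mirror'' evaluation of $R_2$ is false. Directly, $\int_{R_2}\approx\mu^a\int_{|w|\leq 1}(\mu^2+|w+\xi|^2)^{-a}\,dw$, and since $-\xi$ lies in or near $\{|w|\leq 1\}$ the integrand has a spike of height $\mu^{-2a}$ and width $\mu$, so that $\int_{R_2}\approx\mu^a$, $\mu^{n/2}\log(1/\mu)$, or $\mu^b$ according as $2a<n$, $2a=n$, or $2a>n$; this is $q_{12}^{\min(\alpha,\beta)}$ or $q_{12}^{n/(n-2)}\log(1/q_{12})$, not the claimed $q_{12}^\alpha$. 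In this same regime the transition annulus relevant to the logarithm is $\{1\leq|y|\leq 1/\mu\}$, which lies inside $R_2$ (the set $\{1\leq|y|\leq|\xi|/(2\mu)\}$ you invoke is empty or also inside $R_2$), so the log factor at $\alpha=\beta$ is produced by $R_2$ rather than by $R_3$ as your argument attributes. Your final answer is coincidentally correct because the misestimated piece is still $\lesssim$ the target, but as written the argument does not establish the upper bound in the tower regime. The fix is to treat the tower and cluster regimes separately, or to split $R_2$ further into $\{|w|\leq\mu\}$ and $\{\mu<|w|\leq 1\}$ so that the intermediate scales --- and the source of the logarithm --- become visible.
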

\begin{proof}
See the proof of proposition B.2 in \cite{figalli2020sharp}.
\end{proof}

\begin{lemma}\label{lem:3int-est}
Given $n\geq6$, let $U_i=U[z_i,\lambda_i]$, $i=1,2,3$, be three bubbles with $\delta$-interaction, that is $ Q:=\max\{q_{12}, q_{13}, q_{23}\}<\delta$. Suppose $\delta$ is small enough. Then
\begin{enumerate}
    \item For $n=6$, we have
    \begin{align}\label{3int-n=6}
        \int_{\mathbb{R}^n}U_1U_2U_3\lesssim
        Q^{\frac{3}{2}}\log\left(\frac{1}{Q}\right).
    \end{align}
    \item For $n\geq7$, we have
    \begin{align}\label{3int-ngeq7}
        \int_{\mathbb{R}^n}U_1^{p-1}U_2U_3\lesssim
        Q^{\frac{n-1}{n-2}}|\log Q|^{\frac{n-5}{n}}.
    \end{align}
\end{enumerate}
\end{lemma}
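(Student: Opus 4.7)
The plan is to apply H\"older's inequality in order to decompose each triple integral into a product of two-bubble pair integrals $\int U_i^{\alpha} U_j^{\beta}$ with $\alpha+\beta=2^{*}$, and then invoke Lemma \ref{lem:2int-est} on each pair. The two cases in the statement differ only in the choice of H\"older weights.

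For part (1), when $n=6$ we have $p-1=1$ and the integrand $U_1U_2U_3$ is symmetric in the three bubbles, so the natural choice is the symmetric three-factor H\"older
\[
\int U_1 U_2 U_3 \leq \left(\int U_1^{3/2} U_2^{3/2}\right)^{1/3} \left(\int U_1^{3/2} U_3^{3/2}\right)^{1/3} \left(\int U_2^{3/2} U_3^{3/2}\right)^{1/3}.
\]
Since $3/2=2^{*}/2$, each factor falls in the equal-exponent case of Lemma \ref{lem:2int-est} and is $\approx q_{ij}^{3/2}\log(1/q_{ij})$; multiplying out and using $q_{ij}\le Q$ gives \eqref{3int-n=6}.

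For part (2), with $n\ge 7$ the symmetric H\"older is too weak because the integrand is no longer symmetric. The key idea is to pair $U_1^{p-1}$ against each of $U_2,U_3$ in an integral of the form $\int U_1^{p} U_j$, which by Lemma \ref{lem:2int-est} is $\approx q_{1j}$ (since $\min(p,1)=1$ for $n$ finite), while simultaneously reserving the pair $(U_2,U_3)$ for the matched-exponent integral $\int U_2^{n/(n-2)} U_3^{n/(n-2)} \approx q_{23}^{n/(n-2)}\log(1/q_{23})$ so as to recover the logarithmic gain. With weights $\tfrac{p-1}{2p},\tfrac{p-1}{2p},\tfrac{1}{p}$ (which sum to $1$) H\"older gives
\[
\int U_1^{p-1} U_2 U_3 \leq \left(\int U_1^{p} U_2\right)^{\frac{p-1}{2p}} \left(\int U_1^{p} U_3\right)^{\frac{p-1}{2p}} \left(\int U_2^{\frac{n}{n-2}} U_3^{\frac{n}{n-2}}\right)^{\frac{1}{p}};
\]
the exponents match since $2p\cdot\tfrac{p-1}{2p}=p-1$ and $\tfrac{p-1}{2p}+\tfrac{n}{(n-2)p}=\tfrac{(p-1)(n-2)+2n}{2p(n-2)}=\tfrac{2n+4}{2(n+2)}=1$, using $(p-1)(n-2)=4$ and $p(n-2)=n+2$. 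Applying the two bounds from Lemma \ref{lem:2int-est} and $q_{ij}\le Q$ then yields
\[
\int U_1^{p-1} U_2 U_3 \lesssim Q^{(n+4)/(n+2)}\log^{(n-2)/(n+2)}(1/Q).
\]

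To finish, one checks $(n+4)(n-2)-(n-1)(n+2)=n-6>0$ and $n(n-2)-(n-5)(n+2)=n+10>0$ for $n\ge 7$, hence $(n+4)/(n+2)>(n-1)/(n-2)$ and $(n-2)/(n+2)>(n-5)/n$. Factoring, the bound we derived equals
\[
Q^{(n-1)/(n-2)}\log^{(n-5)/n}(1/Q)\cdot Q^{(n-6)/(n^2-4)}\log^{(n+10)/(n(n+2))}(1/Q),
\]
and the second factor tends to $0$ as $Q\to 0$ (the power of $Q$ dominates any power of the logarithm), so in particular it is bounded for $Q\le\delta$ with $\delta$ small. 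This proves \eqref{3int-ngeq7}. The only nontrivial step is the choice of weights in part (2); everything else is a direct computation together with an application of Lemma \ref{lem:2int-est}.
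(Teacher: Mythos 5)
Your proof is correct. Part (1) is the same argument the paper gives: the symmetric three-way H\"older on $U_1U_2U_3$ factors into three equal-exponent pair integrals $\int U_i^{3/2}U_j^{3/2}$, each of size $q_{ij}^{3/2}\log(1/q_{ij})$ by Lemma~\ref{lem:2int-est}, and the monotonicity of $x^{1/2}|\log x|^{1/3}$ near $0$ replaces each $q_{ij}$ by $Q$.

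Part (2) reaches the stated bound by a genuinely different choice of H\"older weights, and the comparison is instructive. The paper uses $\alpha=\frac{4n}{5(n-2)}$, $\beta=\frac{6n}{5(n-2)}$ with weights $s_1=s_1=\frac{5}{2n}$, $s_2=\frac{n-5}{n}$; those parameters are reverse-engineered so that $\left(\int U_1^\alpha U_2^\beta\right)^{s_1}\left(\int U_1^\alpha U_3^\beta\right)^{s_1}\left(\int U_2^{n/(n-2)}U_3^{n/(n-2)}\right)^{s_2}$ produces \emph{exactly} the powers $q_{12}^{\frac{2}{n-2}}q_{13}^{\frac{2}{n-2}}q_{23}^{\frac{n-5}{n-2}}|\log q_{23}|^{\frac{n-5}{n}}$ appearing in \eqref{3int-ngeq7}. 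Your weights $\frac{p-1}{2p},\frac{p-1}{2p},\frac{1}{p}$ pair $U_1^p$ against each of $U_2,U_3$ (giving $\int U_1^pU_j\approx q_{1j}$, legitimate since $p-1=\frac{4}{n-2}>0$ is bounded away from zero for fixed $n$, so the non-matched case of Lemma~\ref{lem:2int-est} applies), and reserve the matched-exponent pairing for $U_2,U_3$; this yields $Q^{\frac{n+4}{n+2}}|\log Q|^{\frac{n-2}{n+2}}$, which you then correctly verify is dominated by $Q^{\frac{n-1}{n-2}}|\log Q|^{\frac{n-5}{n}}$ for $n\ge 7$ via the sign checks $n-6>0$ and $n+10>0$ together with the fact that the residual power of $Q$ beats the logarithm. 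So your intermediate estimate is in fact strictly sharper than the paper's; both proofs rest on the same two tools (generalized H\"older and Lemma~\ref{lem:2int-est}), differing only in the choice of weights, and yours is arguably the more natural choice. One remark: the paper's proof contains a typo in the $n\ge 7$ display, writing $\int U_1U_2U_3$ where the exponent bookkeeping forces the integrand to be $U_1^{p-1}U_2U_3$; you state the target correctly.
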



\begin{proof}
For $n=6$, $2^*=3$, by  the H\"older inequality, we get
\begin{align}\label{6-tri}
    \int_{\mathbb{R}^n}U_1U_2U_3\leq\left(\int_{\mathbb{R}^n}U_1^{\frac{3}{2}}U_2^{\frac{3}{2}}\right)^{\frac{1}{3}}
    \left(\int_{\mathbb{R}^n}U_1^{\frac{3}{2}}U_3^{\frac{3}{2}}\right)^{\frac{1}{3}}
    \left(\int_{\mathbb{R}^n}U_2^{\frac{3}{2}}U_3^{\frac{3}{2}}\right)^{\frac{1}{3}}.
\end{align}
By the Lemma \ref{lem:2int-est}, we have
\begin{align}
    \begin{split}
        \int_{\mathbb{R}^n}U_1U_2U_3\lesssim
        q_{12}^{\frac{1}{2}}|\log q_{12}|^{\frac{1}{3}}
        q_{13}^{\frac{1}{2}}|\log q_{13}|^{\frac{1}{3}}
        q_{23}^{\frac{1}{2}}|\log q_{23}|^{\frac{1}{3}}.
    \end{split}
\end{align}
Since the function $x^{\frac{1}{2}}|\log x|^{\frac{1}{3}}$ increasing near zero, choosing $\delta$ small, we get \eqref{3int-n=6}

For $n\geq7$, $2^*=\frac{2n}{n-2}$, let $\alpha=\frac{4n}{5(n-2)}, \beta= \frac{6n}{5(n-2)}$, $s_1=\frac{5}{2n}$ and $s_2=\frac{n-5}{n}$. By  the H\"older inequality and the Lemma \ref{lem:2int-est}, we get
\begin{align}
    \begin{split}
        \int_{\mathbb{R}^n}U_1U_2U_3\leq&\
        \left(U_1^{\alpha} U_2^{\beta} \right)^{s_1}
        \left(U_1^{\alpha} U_3^{\beta} \right)^{s_1}
        \left(U_2^{\frac{2^*}{2}} U_3^{\frac{2^*}{2}} \right)^{s_2}\\
        \lesssim&\
        q_{12}^{\frac{2}{n-2}} q_{13}^{\frac{2}{n-2}} q_{23}^{\frac{n-5}{n-2}}|\log q_{23}|^{\frac{n-5}{n}}.
        \end{split}
\end{align}
Since the function $x^{\frac{n-5}{n-2}}|\log x|^{\frac{n-5}{n}}$ increasing near zero, choosing $\delta$ small, we get \eqref{3int-ngeq7}
\end{proof}

\begin{lemma}\label{lem:kernel}
For the $Z_i^a$ defined in \eqref{eq:Z}, there exist some constants $\gamma^a=\gamma^a(n)>0$ such that
\begin{align}
\int U_i^{p-1}Z_{i}^aZ_{i}^b=\begin{cases}0\quad &\text{if }\ a\neq b, \\
\gamma^a \quad &\text{if}\ \ 1\leq a=b\leq n+1.\\
\end{cases}
\end{align}
If $i\neq j$ and $1\leq a,b\leq n+1$, we have
\begin{align}
    \left|\int U_i^{p-1}Z_{i}^aZ_{j}^b\right|\lesssim q_{ij}.
\end{align}
\begin{proof}
See the proof in \cite[F1-F6]{bahri1989critical}. Moreover, it is known that $\gamma^1=\gamma^2=\cdots=\gamma^n$.
\end{proof}
\begin{lemma}\label{lem:aip}
Suppose $p\in (1,2]$ and $a_i\geq0$, then
\begin{align}\label{aip}
    \left(\sum_{i=1}^\nu a_i\right)^p-\sum_{i=1}^\nu a_i^p\leq \sum_{i\neq j} [(a_i+a_j)^p-a_i^p-a_j^p]
\end{align}
the equality holds when at most one of $a_i$ is non-zero.
\end{lemma}
\begin{proof}
It is equivalent to prove
\[f(a_1,a_2,\cdots,a_\nu)=\left(\sum_{i=1}^\nu a_i\right)^p+(\nu-2)\sum_{i=1}^\nu a_i^p- \sum_{i\neq j} (a_i+a_j)^p\leq 0\]
Denote $a_1+a_2=s$. Define $g(x)=f(x,s-x,a_3,\cdots, a_\nu)$. It is easy to see
\begin{align*}
    \frac{g''(x)}{p(p-1)}=(\nu-2)[x^{p-2}+(s-x)^{p-2}]-\sum_{i=3}^\nu [(x+a_i)^{p-2}+(s-x+a_i)^{p-2}]
\end{align*}
Since $p-2<0$ and $a_i\geq 0$, then $g''(x)\geq 0$ for $x\in [0,s]$. Since $g(0)=g(s)$, we must have $g$ achieves the maximum at $x=0$ or $s$. Therefore $f(a_1,a_2,\cdots,a_\nu)\leq f(0,a_1+a_2,\cdots, a_\nu)$. Repeating the above process for any pairs, we obtain $f\leq 0$.
\end{proof}
\end{lemma}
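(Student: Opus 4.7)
The plan is a pairwise rearrangement argument. Let
\[
F(a_1,\ldots,a_\nu) := \sum_{i\neq j}\bigl[(a_i+a_j)^p - a_i^p - a_j^p\bigr] - \Bigl[\Bigl(\sum_i a_i\Bigr)^p - \sum_i a_i^p\Bigr];
\]
the goal is $F \geq 0$ on the nonnegative orthant. I would show that transferring all the mass of $a_1$ onto $a_2$ (or vice versa) can only decrease $F$, then iterate to reduce to a configuration with at most one nonzero coordinate, in which both sides of the desired inequality obviously vanish.

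Concretely, fix $a_3, \ldots, a_\nu$ and the sum $s := a_1 + a_2$, and consider the slice $g(x) := F(x, s-x, a_3, \ldots, a_\nu)$ for $x \in [0, s]$. All terms independent of the first two coordinates drop out after differentiating twice, and the remaining contributions come from $x^p$, $(s-x)^p$, $(x+a_i)^p$, and $(s-x+a_i)^p$ for $i \geq 3$. A direct computation gives
\[
\frac{g''(x)}{p(p-1)} = \sum_{i=3}^\nu \bigl[(x+a_i)^{p-2} + (s-x+a_i)^{p-2}\bigr] - (\nu-2)\bigl[x^{p-2} + (s-x)^{p-2}\bigr].
\]
Since $p - 2 \leq 0$, the map $t \mapsto t^{p-2}$ is nonincreasing on $(0,\infty)$, so $(x+a_i)^{p-2} \leq x^{p-2}$ and, summing over the $(\nu-2)$ indices $i \geq 3$, one gets $g''(x) \leq 0$. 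Hence $g$ is concave on $[0, s]$.

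By the manifest symmetry $F(a_1, a_2, a_3, \ldots) = F(a_2, a_1, a_3, \ldots)$, we have $g(x) = g(s-x)$, so $g(0) = g(s)$; a concave function on a bounded interval with equal endpoint values attains its minimum at the endpoints. Therefore $F(x, s-x, a_3, \ldots) \geq F(0, s, a_3, \ldots) = F(s, 0, a_3, \ldots)$, which reduces the problem to the configuration with $a_1 = 0$, i.e., one fewer nonzero variable. I would then induct on the number of nonzero coordinates, with the base case being at most one $a_i \neq 0$, for which every pairwise term $(a_i+a_j)^p - a_i^p - a_j^p$ vanishes and so does $(\sum a_k)^p - \sum a_k^p$; thus $F = 0$ in the base case and $F \geq 0$ in general. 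The equality characterization follows because for $p < 2$ the map $t \mapsto t^{p-2}$ is strictly decreasing, so at each step the inequality $g''(x) \leq 0$ is strict unless the remaining $a_i$ with $i \geq 3$ all vanish, eventually forcing at most one coordinate to be nonzero.

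The main — indeed essentially only — technical point is the bookkeeping in the $g''$ computation: the coefficient $(\nu-2)$ appearing in front of the negative term must line up exactly with the $(\nu-2)$ indices $i \geq 3$ in the positive sum. This alignment is where the precise structure of $F$ is used: each index appears in $(\nu-1)$ unordered pairs inside $\sum_{i\neq j}$, and subtracting the single $\sum_i a_i^p$ coming from the LHS of the lemma produces the net coefficient $(\nu-2)$ on $\sum a_i^p$ that matches exactly the number of ``other'' coordinates and allows the monotonicity of $t^{p-2}$ to close the estimate.
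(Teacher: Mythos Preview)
Your argument is correct and is essentially the same as the paper's: you work with $F=-f$ (so your $g$ is concave where the paper's is convex), but the pairwise rearrangement via the second-derivative computation and the matching of the coefficient $(\nu-2)$ with the number of remaining indices is identical. Your additional remark on the strict equality case for $p<2$ goes slightly beyond what the paper writes out, but is consistent with it.
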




\section{Integrals required in section 3}
This appendix is devoted to the computations of integral quantities mainly required in section 3. Precisely, they are used to bound the integral quantities $\int \sigma^{p-1}\rho_0^2$ and $\int (\sigma^p-\sum U_i^p)\rho_0$. The strategy is that: if the integrals essentially much less than $R^{-n-2}$, we can simply use H\"older inequality and Lemma \ref{lem:2int-est} to give them bounds no greater than $R^{-n-2}$. Otherwise, we need compute the integrals in four cases: bubble tower with $U_1$ higher (lower) than $U_2$, bubble cluster with $U_1$ higher (lower) than $U_2$. In each case, we split the involved integrals in regions where the integrand  has a power-like behavior and then computing the integrals explicitly.

Recall that
\begin{align}
    U_i(x)=U[z_i,\lambda_i](x)\thickapprox\frac{\lambda_i^{\frac{n-2}{2}}}{(1+\lambda_i^2|x-z_i|^2)^{\frac{n-2}{2}}}=\frac{\lambda_i^{\frac{n-2}{2}}}{\langle y_i\rangle^{n-2}},\quad i=1,2,3,
\end{align}
where $y_i=\lambda_i(x-z_i)$ and  $\langle y\rangle=\sqrt{1+|y|^2}$. For $i\neq j$,
\begin{align}
    R_{ij}:=\max\left\{\sqrt{\lambda_i/\lambda_j}, \sqrt{\lambda_j/\lambda_i}, \sqrt{\lambda_i\lambda_j}|z_i-z_j|\right\}\thickapprox q_{ij}^{-\frac{1}{n-2}}.
\end{align}



\begin{lemma}\label{lem:sigma(p-1)rho2}
Suppose $n\geq 6$ and $1\ll R\leq R_{12}/2$, we have
\begin{align}
    \int_{|y_1|\leq R} \frac{\lambda_1^2}{\langle y_1\rangle^4}\frac{\lambda_1^{n-2}R^{4-2n}}{\langle y_1\rangle^{4}}dx
    \thickapprox&\
    \begin{cases} R^{4-2n},\quad & n=6,7\\
    R^{-12}\log R,\quad &n=8,\\
    R^{-n-4},\quad &n>8,\end{cases}\label{sigma(p-1)rho2-1}\\
    \int_{|y_1|\geq R} \frac{\lambda_1^2}{\langle y_1\rangle^4}\frac{\lambda_1^{n-2}R^{-8}}{\langle y_1\rangle^{2n-8}}dx\thickapprox&\ \  R^{-n-4},\quad\quad\quad\quad \ \ n\geq 6,\label{sigma(p-1)rho2-2}\\
    \int_{|y_1|\leq R} \frac{\lambda_2^2}{\langle y_2\rangle^4}\frac{\lambda_1^{n-2}R^{4-2n}}{\langle y_1\rangle^{4}}dx\lesssim&\ \  R^{-n-4}|\log R|^{\frac{4}{n}},\quad n\geq 6,\label{sigma(p-1)rho2-3}\\
    \int_{|y_1|\geq R} \frac{\lambda_2^2}{\langle y_2\rangle^4}\frac{\lambda_1^{n-2}R^{-8}}{\langle y_1\rangle^{2n-8}}dx\lesssim&\
    \begin{cases}
        R^{-n-2},\quad &n=6,7,8,\\
        R^{-n-4},\quad &n\geq9.
    \end{cases}
    \label{sigma(p-1)rho2-4}
\end{align}

\end{lemma}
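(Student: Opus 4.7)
The plan is to handle the four integrals separately, treating the one-bubble estimates (1)--(2) by direct rescaling and the two-bubble estimates (3)--(4) by H\"older inequality against a scale-invariant norm of $U_2$.

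For (1), I would substitute $y_1=\lambda_1(x-z_1)$, under which all $\lambda_1$-factors cancel exactly and the integrand collapses to $R^{4-2n}\langle y_1\rangle^{-8}$. The three branches correspond to whether the polynomial decay $\langle y_1\rangle^{-8}$ in $\mathbb{R}^n$ is integrable against the volume form $r^{n-1}\,dr$ on $\{|y_1|\leq R\}$: it is bounded for $n\leq 7$ (giving $R^{4-2n}$), logarithmic for $n=8$ (giving $R^{-12}\log R$), and polynomial of order $R^{n-8}$ for $n>8$ (giving $R^{4-2n}\cdot R^{n-8}=R^{-n-4}$). For (2), the same substitution yields $R^{-8}\int_{|y_1|\geq R}\langle y_1\rangle^{-(2n-4)}dy_1$; the exponent $2n-4$ exceeds $n$ for all $n\geq 5$, so the tail integral is $O(R^{4-n})$ and one obtains exactly $R^{-n-4}$.

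For (3) and (4), the plan is to apply H\"older inequality with the Sobolev-conjugate pair $(n/2,\,n/(n-2))$, exploiting that $\lambda_2^2\langle y_2\rangle^{-4}=c_n U_2^{p-1}$ together with the scale- and translation-invariance of $\int U_2^{2^*}$, which renders $\|U_2^{p-1}\|_{L^{n/2}}=\|U_2\|_{L^{2^*}}^{p-1}$ a dimensional constant independent of the bubble parameters. This removes all dependence on $U_2$ and reduces the problem to estimating $\|w_{1,1}^2\chi_{|y_1|\leq R}\|_{L^{n/(n-2)}}$ or $\|w_{1,2}^2\chi_{|y_1|\geq R/2}\|_{L^{n/(n-2)}}$. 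After rescaling, these reduce to $\int_{|y_1|\leq R}\langle y_1\rangle^{-4n/(n-2)}dy_1$ and $\int_{|y_1|\geq R/2}\langle y_1\rangle^{-2n(n-4)/(n-2)}dy_1$ respectively; the borderline dimension $n=6$ yields the $\log R$ factor in (3), and the H\"older weight $(n-2)/n$ accounts for the exponent $4/n$ on the logarithm.

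I expect two technical subtleties to be the hardest part. First, in (4) at $n=6$ the $L^{n/(n-2)}$ tail integrand is $\langle y_1\rangle^{-6}$ in dimension $6$, which is not integrable at infinity, so pure H\"older fails; to circumvent this I would use $p=2$ (available only at $n=6$) to rewrite the integrand as $\lambda_1^2R^{-8}\,U_1U_2$ and bound $\int_{|y_1|\geq R}U_1U_2\leq \int U_1U_2\lesssim \lambda_1^{-2}R_{12}^{-4}$ by the bubble-interaction asymptotics for $n=6$, which yields the required $R^{-8}$ bound. Second, extracting the sharper rate $R^{-n-4}|\log R|^{4/n}$ in (3), beyond the $R^{-n-2}$ that H\"older alone provides in high dimensions, requires using the hypothesis $R\leq R_{12}/2$ together with the pointwise bound $U_2\lesssim \lambda_1^{(n-2)/2}R_{12}^{2-n}$ valid on $\{|y_1|\leq R\}$ from the bubble-tower and bubble-cluster analysis of Proposition \ref{prop:on-weight}; this replaces the Sobolev norm of $U_2^{p-1}$ by the uniform pointwise bound $\lambda_1^2R_{12}^{-4}$ on the integration domain and produces the additional $R_{12}^{-4}\leq R^{-4}$ decay factor, at which point the remaining integral $\int_{|y_1|\leq R}\langle y_1\rangle^{-4}dy_1\approx R^{n-4}$ combines with $R^{4-2n}$ to give $R^{-n}$, and multiplication by $R^{-4}$ produces $R^{-n-4}$.
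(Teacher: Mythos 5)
Your arguments for \eqref{sigma(p-1)rho2-1} and \eqref{sigma(p-1)rho2-2} are correct and coincide with the paper's: after rescaling $y_1=\lambda_1(x-z_1)$ both are routine radial integrals. The two cross-bubble estimates, however, contain genuine gaps. The device you propose for \eqref{sigma(p-1)rho2-3} --- the pointwise bound $U_2\lesssim\lambda_1^{(n-2)/2}R_{12}^{2-n}$ on $\{|y_1|\leq R\}$ --- is false in the bubble-tower regime $\lambda_2\gg\lambda_1$. There $R_{12}=\sqrt{\lambda_2/\lambda_1}$ forces $\lambda_1|z_1-z_2|\leq 1$, so $z_2$ lies inside $\{|y_1|\leq R\}$, and at $x=z_2$ one has $U_2\approx\lambda_2^{(n-2)/2}$, which exceeds your claimed bound by the unbounded factor $R_{12}^{2(n-2)}$. (The estimate you cite from Proposition~\ref{prop:on-weight} is only established when $U_1$ is the \emph{higher} bubble, i.e.\ $\lambda_1\geq\lambda_2$.) The paper obtains \eqref{sigma(p-1)rho2-3} by a different split: it rewrites the integrand as $\lambda_1^{n-4}R^{4-2n}(U_1U_2)^{4/(n-2)}$ and applies H\"older on $\{|y_1|\leq R\}$ with exponents $\frac{n}{n-4}$ and $\frac{n}{4}$, so that the $L^{n/4}$ factor is $\bigl(\int U_1^{2^*/2}U_2^{2^*/2}\bigr)^{4/n}\lesssim q_{12}^{4/(n-2)}|\log q_{12}|^{4/n}$ by Lemma~\ref{lem:2int-est}, while the $L^{n/(n-4)}$ norm of the indicator supplies $(\lambda_1^{-n}R^n)^{(n-4)/n}$; monotonicity of $t^4|\log t|^{4/n}$ together with $R\leq R_{12}$ then yields $R^{-n-4}|\log R|^{4/n}$.

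For \eqref{sigma(p-1)rho2-4} your $L^{n/2}$--$L^{n/(n-2)}$ H\"older gives only $R^{-n-2}$; this matches the claim for $n=7,8$, but the lemma asserts $R^{-n-4}$ for $n\geq 9$ and your proposal is silent there. Scale-invariant H\"older cannot reach $R^{-n-4}$ because it ignores $R\leq R_{12}/2$, and that hypothesis is essential: the test $\lambda_1=1$, $\lambda_2=R_{12}^{-2}$, $z_1=z_2$, $R=R_{12}/2$ gives exactly $\approx R^{-n-4}$. The paper instead runs a four-case analysis over bubble towers and clusters, using the explicit position of $U_2$ relative to the annulus $\{|y_1|\geq R\}$. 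Your $n=6$ workaround also invokes $\int U_1U_2\lesssim\lambda_1^{-2}R_{12}^{-4}$; note $1+1\neq 2^*=3$, so this is not covered by Lemma~\ref{lem:2int-est}, and it is false --- for the tower $\lambda_1>\lambda_2$ a direct computation gives $\int U_1U_2\approx\lambda_1^{-2}$, with no $R_{12}$ decay. The corrected bound $\lambda_1^{-2}$ still produces the required $R^{-8}$, but the interaction asymptotics as cited would need to be replaced.
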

\begin{proof}
(1) The first integral in \eqref{sigma(p-1)rho2-1} is equivalent to
\begin{align}
\begin{split}
    \lambda_1^{n}R^{4-2n}\int_{|y_1|\leq R}\frac{dx}{\langle y_1\rangle^{8}}=
    R^{4-2n}\int_{|y_1|\leq R}\frac{dy_1}{\langle y_1\rangle^{8}}
    \thickapprox\begin{cases} R^{4-2n},\quad & n=6,7,\\
    R^{-12}\log R,\quad &n=8,\\
    R^{-4-n},\quad &n\geq 9.\end{cases}
\end{split}
\end{align}

\noindent
(2) The second integral in \eqref{sigma(p-1)rho2-2} is equivalent to
\begin{align}
    R^{-8}\int_{|y_1|\geq R} \frac{dy_1}{\langle y_1\rangle^{2n-4}}\thickapprox R^{-n-4}.
\end{align}

\noindent
(3) The third integral in \eqref{sigma(p-1)rho2-3} is equivalent to
\begin{align}\nonumber
    \begin{split}
        \lambda_1^{n-4}R^{4-2n}\int_{|y_1|\leq R}&U_2^{\frac{4}{n-2}}(x) U_1^{\frac{4}{n-2}}(x)dx\\
        \leq&\
        \lambda_1^{n-4}R^{4-2n}\left(\int_{|y_1|\leq R}\frac{dy_1}{\lambda_1^n}\right)^{\frac{n-4}{n}}
        \left(\int_{|y_1|\leq R}U_2^{\frac{n}{n-2}}U_1^{\frac{n}{n-2}}dx\right)^{\frac{4}{n}}
        \\
        \lesssim&\ R^{-n} q_{12}^{\frac{4}{n-2}}|\log q_{12}|^{\frac{4}{n}}.
    \end{split}
\end{align}
Here we have used the H\"older inequality and the Lemma \ref{lem:2int-est}.
Recall that $R\leq R_{12}\thickapprox q_{12}^{-\frac{1}{n-2}}$ and the function $x^{4}|\log x|^{\frac{4}{n}}$ is increasing near zero, then we get
\begin{align}
    \int_{|y_1|\leq R} U_2^{\frac{4}{n-2}}(x)\frac{\lambda_1^{n-2}R^{4-2n}}{\langle y_1\rangle^{4}}dx\lesssim R^{-n-4}|\log R|^{\frac{4}{n}}.
\end{align}
(4)
Set $z=\lambda_1(z_2-z_1)$ and $\lambda=\lambda_2/\lambda_1$, the integral in \eqref{sigma(p-1)rho2-4} is equivalent to
\begin{align}\label{4-change}
    \begin{split}
       \int_{|y_1|\geq R} \frac{\lambda_2^2}{\langle y_2\rangle^4}\frac{\lambda_1^{n-2}R^{-8}}{\langle y_1\rangle^{2n-8}}dx=\  R^{-8}\int_{|y_1|\geq R}\frac{1}{(\lambda^{-1}+\lambda|y_1-z|^2)^{2}}\frac{dy_1}{\langle y_1\rangle^{2n-8}}.
    \end{split}
\end{align}
\emph{The case $\lambda_1\geq \lambda_2$ and $R_{12}=\sqrt{\lambda_1/\lambda_2}$}. We have $|z|\leq \lambda^{-1}=R_{12}^2$, then it holds that $|y_1-z|\geq |y_1|/2$ when $|y_1|\geq 2\lambda^{-1}$. Thus, from \eqref{4-change} we have
\begin{align}
       \int_{|y_1|\geq R} \frac{\lambda_2^2}{\langle y_2\rangle^4}\frac{\lambda_1^{n-2}R^{-8}}{\langle y_1\rangle^{2n-8}}dx
       \lesssim&\ R^{-8}\left(\lambda^{2}\int_{R\leq |y_1|\leq 2\lambda^{-1}}\frac{dy_1}{\langle y_1\rangle^{2n-8}} +\lambda^{-2}\int_{|y_1|\geq 2\lambda^{-1}} \frac{dy_1}{\langle y_1\rangle^{2n-4}}\right)\notag\\
       \lesssim&\
       \begin{cases}
           R^{-n-4},\quad  &n\geq 9,\\
           R^{-12}\log R,\quad &n=8,\\
        R^{4-2n}, \quad &n=6,7.
       \end{cases}\label{4-change-1}
\end{align}
\emph{The case $\lambda_1\leq \lambda_2$ and $R_{12}=\sqrt{\lambda_2/\lambda_1}$}. We have $|z|\leq 1$, then it holds that $|y_1-z|\geq |y_1|/2$ when $|y_1|\geq R\gg1$. Thus, from \eqref{4-change} we have
\begin{align}\label{4-change-2}
    \begin{split}
      \int_{|y_1|\geq R} \frac{\lambda_2^2}{\langle y_2\rangle^4}\frac{\lambda_1^{n-2}R^{-8}}{\langle y_1\rangle^{2n-8}}dx
       \lesssim&\ R^{-8}\lambda^{-2}\int_{|y_1|\geq R}\frac{1}{(\lambda^{-2}+|y_1-z|^2)^{2}}\frac{dy_1}{\langle y_1\rangle^{2n-8}}\\
       \lesssim&\ R^{-8}R_{12}^{-4}\int_{|y_1|\geq R}\frac{dy_1}{\langle y_1\rangle^{2n-4}}\lesssim R^{-8-n}.
    \end{split}
\end{align}
\emph{The case $\lambda_1\geq \lambda_2$ and $R_{12}=\sqrt{\lambda_1\lambda_2}|z_1-z_2|$}. We have $R_{12}=\sqrt{\lambda}|z|\leq |z|$ and $\lambda|z|\geq1$. Set $\hat{y}=y_1/|z|$ and $e=z/|z|$, we have
\begin{align}\label{4-change-3}
    \begin{split}
       &R^{-8}\int_{|y_1|\geq R}\frac{1}{(\lambda^{-1}+\lambda|y_1-z|^2)^{2}}\frac{dy_1}{\langle y_1\rangle^{2n-8}} \\
   \thickapprox&\  R^{-8}\lambda^{-2}\int_{|\hat{y}|\geq R/|z|}\frac{|z|^{4-n}}{(\lambda^{-1}|z|^{-1}+|\hat{y}-e|)^{4}}\frac{d\hat{y}}{(|z|^{-1}+|\hat{y}|)^{2n-8}}\\
       \lesssim&\ R^{-8}\lambda^{-2}|z|^{4-n}\left( \int_{R/|z|\leq |\hat{y}|\leq 1/2}\frac{d\hat{y}}{|\hat{y}|^{2n-8}}+\int_{1/2\leq |\hat{y}|\leq 2}\frac{d\hat{y}}{|\hat{y}-e|^{4}}+\int_{|\hat{y}|\geq 2}\frac{d\hat{y}}{|\hat{y}|^{2n-4}}\right)\\
       \lesssim&\
       \begin{cases}
           R^{-8}(\lambda|z|)^{-2}|z|^{6-n}\lesssim R^{-n-2},\quad &n=6,7,\\
           R^{-8}\lambda^{-2}|z|^{-4}\log \left( \frac{|z|}{R}\right)\lesssim R^{-10}\lambda|\log\left(\frac{\lambda}{2}\right)|\lesssim R^{-10},\quad &n=8,\\
           R^{-n}(\sqrt{\lambda}|z|)^{-4}\lesssim R^{-n-4},\quad &n\geq 9.
       \end{cases}
    \end{split}
\end{align}
\emph{The case $\lambda_1\leq \lambda_2$ and $R_{12}=\sqrt{\lambda_1\lambda_2}|z_1-z_2|$}. Similar to the case above, we have
\begin{align}\label{4-change-4}
    \begin{split}
       R^{-8}\int_{|y_1|\geq R}\frac{1}{(\lambda^{-1}+\lambda|y_1-z|^2)^{2}}\frac{dy_1}{\langle y_1\rangle^{2n-8}}
       \lesssim\
       \begin{cases}
           R^{-n-2},\quad &n=6,7,8,\\
           R^{-n-4},\quad &n\geq 9.
       \end{cases}
    \end{split}
\end{align}
Together with \eqref{4-change-1}-\eqref{4-change-4}, we get \eqref{sigma(p-1)rho2-4}.
\end{proof}

\begin{lemma}\label{lem:s-U-rho}
Suppose $n\geq 6$ and $1\ll R\leq R_{12}/2$, we have
\begin{align}
\int_{|y_1|\leq R} \frac{\l_1^{\frac{n+2}{2}}R^{2-n}}{\langle y_1\rangle^4}\frac{\lambda_1^{\frac{n-2}{2}}R^{2-n}}{\langle y_1\rangle^{2}}dx\thickapprox&\
\begin{cases}
  R^{-8}\log R,\quad &n=6,\\
  R^{-n-2},\quad &n\geq 7,\\
\end{cases}\label{s-c-1}\\
\int_{|y_1|\geq R} \frac{\l_1^{\frac{n+2}{2}}R^{-4}}{\langle y_1\rangle^{n-2}}\frac{\lambda_1^{\frac{n-2}{2}}R^{-4}}{\langle y_1\rangle^{n-4}}dx\thickapprox&\ \
  R^{-n-2},\quad\quad\quad\quad n\geq 7,
\label{s-c-2}\\
 \int_{|y_1|\leq R,|y_2|\leq R} \frac{\l_1^{\frac{n+2}{2}}R^{2-n}}{\langle y_1\rangle^4}\frac{\lambda_2^{\frac{n-2}{2}}R^{2-n}}{\langle y_2\rangle^2}dx
 \lesssim&\ \  R^{-n-2},\quad \quad \quad \quad  n\geq 6,\label{s-c-3}\\
    \int_{|y_1|\leq R,|y_2|\geq R} \frac{\l_1^{\frac{n+2}{2}}R^{2-n}}{\langle y_1\rangle^4}\frac{\lambda_2^{\frac{n-2}{2}}R^{-4}}{\langle y_2\rangle^{n-4}}dx\lesssim&\
    \begin{cases}
            R^{-8}\log R,\quad &n=6,\\
            R^{-n-2},\quad &n\geq 7,
    \end{cases}
    \label{s-c-4}\\
     \int_{|y_1|\geq R,|y_2|\leq R} \frac{\l_1^{\frac{n+2}{2}}R^{-4}}{\langle y_1\rangle^{n-2}}\frac{\lambda_2^{\frac{n-2}{2}}R^{2-n}}{\langle y_2\rangle^{2}}dx\lesssim&\
     \begin{cases}
            R^{-8}\log R,\quad &n=6,\\
            R^{-n-2},\quad &n\geq 7,
        \end{cases}\label{s-c-5}\\
     \int_{|y_1|\geq R,|y_2|\geq R} \frac{\l_1^{\frac{n+2}{2}}R^{-4}}{\langle y_1\rangle^{n-2}}\frac{\lambda_2^{\frac{n-2}{2}}R^{-4}}{\langle y_2\rangle^{n-4}}dx\lesssim&\ \ R^{-n-2},\quad  \quad\quad\quad n\geq 7.
     \label{s-c-6}
\end{align}
\end{lemma}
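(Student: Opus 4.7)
My plan is to treat the six estimates in two groups. The single-center estimates \eqref{s-c-1}--\eqref{s-c-2} will follow by a direct change of variable. For \eqref{s-c-1}, sending $y_1 = \lambda_1(x-z_1)$ reduces the integral to $R^{4-2n}\int_{|y_1|\leq R}\langle y_1\rangle^{-6}dy_1$; the critical dimension $n=6$ gives the $\log R$ factor (the integrand is borderline non-integrable on $\mathbb{R}^6$), whereas for $n\geq 7$ the integral contributes $R^{n-6}$ and I recover the announced $R^{-n-2}$ bound. An analogous rescaling handles \eqref{s-c-2}, whose integrand decays at infinity fast enough to be integrable exactly when $n\geq 7$. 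No further subdivision is needed in these two cases.

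For the two-center estimates \eqref{s-c-3}--\eqref{s-c-6}, I will follow the case-analysis pattern already used in the proof of Lemma~\ref{lem:sigma(p-1)rho2}: in each of the four bubble configurations (bubble tower or cluster, $\lambda_1 \geq \lambda_2$ or $\lambda_1 \leq \lambda_2$), I would change variables to $y_1$, re-express $U_2$ in $y_1$-coordinates via one of the formulas \eqref{t:U2atU1}, \eqref{t:U1atU2}, \eqref{c:U2atU1}, \eqref{c:U1atU2}, and split the region of integration into annular sub-regions on which the integrand is essentially a pure power of $|y_1|$. After change of variable, each integral takes the schematic form
\[
R^{-n-2}(\lambda_2/\lambda_1)^{(n-2)/2}\int_{\mathcal{A}}\frac{dy_1}{\langle y_1\rangle^{\alpha}\langle y_2\rangle^{\beta}},
\]
with exponents $(\alpha,\beta)$ read off from the specific integrand; the prefactor $(\lambda_2/\lambda_1)^{(n-2)/2}$ equals $R_{12}^{\pm(n-2)}$ in the tower case and is controlled by $|z_1-z_2|$ in the cluster case. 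The standing hypothesis $R\leq R_{12}/2$ then forces $\langle y_j\rangle\sim 1$ on large portions of the narrow bubble's support, so that the double integral collapses to a radial one.

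The main point to track is the origin of the $\log R$ factor in \eqref{s-c-4} and \eqref{s-c-5} when $n=6$. I expect it to come from a single sub-region in which the radial integral becomes precisely $\int_1^R r^{-1}dr$; this happens when the exponents $(\alpha,\beta)$ in the reduced integral sum to $n$ and the wide bubble fills the annulus $1\leq |y_j|\leq R$. For \eqref{s-c-3} and \eqref{s-c-6}, no critical exponent appears, so the $R^{-n-2}$ bound holds for all $n\geq 6$ with significant slack when $n=6$. The main obstacle I anticipate is not conceptual but administrative: there are up to four sub-cases for each of four integrals, each needing its own regional splitting, and I will need to verify systematically that every factor of $R_{12}$ can be absorbed into a power of $R$ by using $R\leq R_{12}/2$. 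No idea beyond those already exploited in Lemma~\ref{lem:sigma(p-1)rho2} should be required.
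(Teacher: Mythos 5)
Your plan matches the paper's proof: the single-center bounds \eqref{s-c-1}--\eqref{s-c-2} follow from the rescaling you describe, and the cross terms \eqref{s-c-3}--\eqref{s-c-6} are handled by exactly the same four-way case analysis (tower vs.\ cluster, $\lambda_1\gtrless\lambda_2$) followed by partitioning into annular sub-regions on which the integrand is power-like, using $R\leq R_{12}/2$ to absorb surviving factors of $R_{12}$, with the $\log R$ in \eqref{s-c-4}--\eqref{s-c-5} arising precisely from the $n=6$ critical exponent as you anticipate. The only cosmetic difference is that the paper sometimes rescales around $z_2$ instead of $z_1$ (e.g.\ for \eqref{s-c-5}, where the constraint is $|y_2|\leq R$, and for the $\lambda_1\leq\lambda_2$ sub-cases of \eqref{s-c-3}); this is purely a matter of convenience and does not change the argument.
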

\begin{proof}
(1) The integral in \eqref{s-c-1} is,
\begin{align}
\begin{split}
    R^{4-2n}\int_{|y_1|\leq R}\frac{\lambda_1^{n}dx}{\langle y_1\rangle^{6}}=&\
    R^{4-2n}\int_{|y_1|\leq R}\frac{dy_1}{\langle y_1\rangle^{6}}\thickapprox
    \begin{cases}
  R^{-8}\log R,\quad &n=6,\\
  R^{-n-2},\quad &n\geq 7,\\
\end{cases}
\end{split}
\end{align}

\noindent
(2) The integral in \eqref{s-c-2} is
\begin{align}
\begin{split}
    R^{-8}\int_{|y_1|\geq R}\frac{\lambda_1^{n}dx}{\langle y_1\rangle^{2n-6}}=&\
    R^{-8}\int_{|y_1|\geq R}\frac{dy_1}{\langle y_1\rangle^{2n-6}}
    \thickapprox R^{-n-2},\quad n\geq7.
\end{split}
\end{align}
(3)
\emph{The case $\lambda_1\geq \lambda_2$ and $R_{12}=\sqrt{\lambda_1/\lambda_2}$.} Set $\Omega_1=\{|y_1|\leq R, |y_1-z|\leq R/\lambda\}$, $z=\lambda_1(z_2-z_1)$ and $\lambda=\lambda_2/\lambda_1$. The integral in \eqref{s-c-3} is
\begin{align}\label{s-c-3-1}
    \begin{split}
        \int_{\Omega_1} \frac{\l_1^{\frac{n+2}{2}}R^{2-n}}{\langle y_1\rangle^4}\frac{\lambda_2^{\frac{n-2}{2}}R^{2-n}}{\langle y_2\rangle^2}dx=&\
        R^{4-2n}\left(\frac{\lambda_2}{\lambda_1}\right)^{\frac{n-4}{2}}\int_{\Omega_1}\frac{1}{\langle y_1\rangle^4}\frac{dy_1}{\l^{-1}+\l|y_1-z|^2}\\
        \lesssim&\
        R^{4-2n}R_{12}^{4-n}\int_{|y_1|\leq  R}\frac{dy_1}{\langle y_1\rangle^4}
        \lesssim R^{4-2n}.
    \end{split}
\end{align}
\emph{The case $\lambda_1\leq \lambda_2$ and $R_{12}=\sqrt{\lambda_2/\lambda_1}$.} Set $z=\lambda_2(z_1-z_2)$ and $\lambda=\lambda_1/\lambda_2$. The integral in \eqref{s-c-3} is, seeing that $\Omega_1=\{|y_2|\leq R, |y_2-z|\leq R/\lambda\}$,
\begin{align}\label{s-c-3-2}
    \begin{split}
        \int_{\Omega_1} \frac{\l_1^{\frac{n+2}{2}}R^{2-n}}{\langle y_1\rangle^4}\frac{\lambda_2^{\frac{n-2}{2}}R^{2-n}}{\langle y_2\rangle^2}dx=&\
        R^{4-2n}\left(\frac{\lambda_1}{\lambda_2}\right)^{\frac{n-2}{2}}\int_{\Omega_1}\frac{1}{\langle y_2\rangle^2}\frac{dy_2}{(\l^{-1}+\lambda|y_2-z|^2)^2}\\
        \lesssim&\
        R^{4-2n}R_{12}^{2-n}\int_{|y_2|\leq  R}\frac{dy_2}{\langle y_2\rangle^2}
        \lesssim R^{4-2n}.
    \end{split}
\end{align}
\emph{The case $\lambda_1\geq \lambda_2$ and $R_{12}=\sqrt{\lambda_1\lambda_2}|z_1-z_2|$.} Set $z=\lambda_1(z_2-z_1)$ and $\lambda=\lambda_2/\lambda_1$. In this case, if $\Omega_1\neq \emptyset$, then one must have $|x-z_1|\geq \frac12|z_1-z_2|$ (see Figure \ref{fig:bubble-cluster}), that is $|y_1-z|\geq \frac12|z|$ and $\l |y_1-z|^2\approx R_{12}^2$ in $\Omega_1$. Consequently, the integral in  \eqref{s-c-3} is
\begin{align}\label{s-c-3-3}
    \begin{split}
        \int_{\Omega_1} \frac{\l_1^{\frac{n+2}{2}}R^{2-n}}{\langle y_1\rangle^4}\frac{\lambda_2^{\frac{n-2}{2}}R^{2-n}}{\langle y_2\rangle^2}dx=&\
        R^{4-2n}\left(\frac{\lambda_2}{\lambda_1}\right)^{\frac{n-4}{2}}\int_{\Omega_1}\frac{1}{\langle y_1\rangle^4}\frac{dy_1}{\lambda^{-1}+\lambda|y_1-z|^2}\\
        \lesssim&\
        R^{4-2n}R_{12}^{-2}\int_{|y_1|\leq R}\frac{dy_1}{\langle y_1\rangle^4}
        \lesssim R^{-n-2}.
    \end{split}
\end{align}
\emph{The case $\lambda_1\leq \lambda_2$ and $R_{12}=\sqrt{\lambda_1\lambda_2}|z_1-z_2|$.} Set $z=\lambda_2(z_1-z_2)$ and $\lambda=\lambda_1/\lambda_2$. The integral in  \eqref{s-c-3} is
\begin{align}\label{s-c-3-4}
    \begin{split}
        \int_{\Omega_1} \frac{\l_1^{\frac{n+2}{2}}R^{2-n}}{\langle y_1\rangle^4}\frac{\lambda_2^{\frac{n-2}{2}}R^{2-n}}{\langle y_2\rangle^2}dx=&\
        R^{4-2n}\left(\frac{\lambda_1}{\lambda_2}\right)^{\frac{n-2}{2}}\int_{\Omega_1}\frac{1}{\langle y_2\rangle^2}\frac{dy_2}{(\l^{-1}+\l|y_2-z|^2)^2}\\
        \lesssim&\
        R^{4-2n}R_{12}^{-4}\int_{|y_2|\lesssim R}\frac{dy_2}{\langle y_2\rangle^2}
        \lesssim R^{-n-2}.
    \end{split}
\end{align}
From \eqref{s-c-3-1}-\eqref{s-c-3-4},  we get \eqref{s-c-3}.

\noindent
(4) To prove \eqref{s-c-4}, we shall rescale the LHS of it around $z_1$. For the following four cases, we set $\Omega_2=\{|y_1|\leq R,|y_2|\geq R\}$,  $z=\lambda_1(z_2-z_1)$ and $\lambda=\lambda_2/\lambda_1$.

\noindent
\emph{The case $\lambda_1\geq \lambda_2$ and $R_{12}=\sqrt{\lambda_1/\lambda_2}$.}
By the definition of $R_{12}$, we have $\lambda |z|\leq 1$. Since $y_2=\lambda (y_1-z)$, then $|y_2|\leq \lambda(|y_1|+|z|)\leq \lambda R+1$. By the assumption $R\gg1$, it holds that $\Omega_2=\emptyset$. Thus, the integral in \eqref{s-c-4} is zero.

\noindent
\emph{The case $\lambda_1\leq \lambda_2$ and $R_{12}=\sqrt{\lambda_2/\lambda_1}$.}  Remember that $|z|\leq1$ and $\l=R^2_{12}\gg1$.  The integral in \eqref{s-c-4} is, see that $\Omega_2=\{x: \lambda^{-1}R\leq |y_1-z|,\ |y_1|\leq R\}$,
\begin{align}\label{s-c-4-1}
    \begin{split}
        \int_{\Omega_2} \frac{\l_1^{\frac{n+2}{2}}R^{2-n}}{\langle y_1\rangle^4}&\frac{\lambda_2^{\frac{n-2}{2}}R^{-4}}{\langle y_2\rangle^{n-4}}dx=\ R^{-n-2}R_{12}^{6-n} \int_{\Omega_2}\frac{1}{\langle y_1\rangle^4}\frac{dy_1}{(\lambda^{-2}+|y_1-z|^2)^{\frac{n-4}{2}}}\\
     \thickapprox&\
     R^{-n-2}R_{12}^{6-n}\left(\int_{R/\l}^2t^3dt+\int_{2}^{R}\frac{dt}{t}\right)
     \lesssim R^{4-2n}\log R.
    \end{split}
\end{align}
\emph{The case $\lambda_1\geq \lambda_2$ and $R_{12}=\sqrt{\lambda_1\lambda_2}|z_1-z_2|$.} In $\Omega_2$, we have $|y_1|\leq |y_2|=\lambda|y_1-z|\leq |y_1-z|$. It follows that $|y_1-z|\geq|z|/2$ and $\lambda|y_1-z|^2\thickapprox R_{12}^2$ in $\Omega_2$. Then
\begin{align}\label{s-c-4-2}
    \begin{split}
        \int_{\Omega_2} \frac{\l_1^{\frac{n+2}{2}}R^{2-n}}{\langle y_1\rangle^4}\frac{\lambda_2^{\frac{n-2}{2}}R^{-4}}{\langle y_2\rangle^{n-4}}dx=&\
    R^{-n-2}\frac{\lambda_2}{\lambda_1} \int_{\Omega_2}\frac{1}{\langle y_1\rangle^4}\frac{dy_1}{(\lambda^{-1}+\l|y_1-z|^2)^{\frac{n-4}{2}}}\\
    \lesssim&\
    R^{-n-2}R_{12}^{4-n}\int_{|y_1|\leq R}\frac{dy_1}{\langle y_1\rangle^4}\lesssim R^{-n-2}.
    \end{split}
\end{align}
\emph{The case $\lambda_1\leq \lambda_2$ and $R_{12}=\sqrt{\lambda_1\lambda_2}|z_1-z_2|$.} Let $\hat{y}=y_1/|z|$ and $e=z/|z|$, then $\Omega_2=\{|\hat{y}|\leq \frac{R}{|z|}, |\hat{y}-e|\geq \frac{R}{\lambda |z|}\}$. Then we get
\begin{align}\label{s-c-4-3}
    \begin{split}
        \int_{\Omega_2} \frac{\l_1^{\frac{n+2}{2}}R^{2-n}}{\langle y_1\rangle^4}\frac{\lambda_2^{\frac{n-2}{2}}R^{-4}}{\langle y_2\rangle^{n-4}}dx=&\ R^{-n-2}\left(\frac{\lambda_2}{\lambda_1}\right)^{\frac{6-n}{2}} \int_{\Omega_2}\frac{1}{\langle y_1\rangle^4}\frac{dy_1}{(\lambda^{-2}+|y_1-z|^2)^{\frac{n-4}{2}}}\\
        \lesssim&\ R^{-n-2}\left(\frac{\lambda_2}{\lambda_1}\right)^{\frac{6-n}{2}}\left(\int_0^1t^{n-5}dt+\int_0^1t^{3}dt+\int_{1}^{R/|z|}\frac{dt}{t}\right)\\
        \thickapprox&\ R^{-n-2}\left(\frac{\lambda_2}{\lambda_1}\right)^{\frac{6-n}{2}}\log \left(\frac{R}{|z|}\right)\\
        \lesssim&\
        \begin{cases}
            R^{-n-2}\log R,\quad &n=6,\\
            R^{-n-2},\quad &n\geq 7.
        \end{cases}
    \end{split}
\end{align}
Together with \eqref{s-c-4-1}-\eqref{s-c-4-3}, we get \eqref{s-c-4}.

\noindent
(5) To prove \eqref{s-c-5}, we shall rescale the LHS of it around $z_2$. For the following four cases, we set $\Omega_3=\{x: \lambda^{-1}R\leq|y_2-z|, |y_2|\leq R\}$, $z=\lambda_2(z_1-z_2)$ and $\lambda=\lambda_1/\lambda_2$. 

\noindent
\emph{The case $\lambda_1\geq \lambda_2$ and $R_{12}=\sqrt{\lambda_1/\lambda_2}$.} Then  the integral in \eqref{s-c-5} is
\begin{align}\label{s-c-5-1}
    \begin{split}
        \int_{\Omega_3} \frac{\l_1^{\frac{n+2}{2}}R^{-4}}{\langle y_1\rangle^{n-2}}&\frac{\lambda_2^{\frac{n-2}{2}}R^{2-n}}{\langle y_2\rangle^{2}}dx=\ R^{-n-2}\left(\frac{\l_2}{\l_1}\right)^{\frac{6-n}{2}}\int_{\Omega_3}\frac{1}{(\lambda^{-2}+|y_2-z|^2)^{\frac{n-2}{2}}}\frac{dy_2}{\langle y_2\rangle^2}\\
         \thickapprox&\ R^{-n-2}R_{12}^{6-n}\left(\int_{R/\l}^2tdt+\int_{2}^{R}\frac{dt}{t}\right)
     \lesssim R^{4-2n}\log R.
    \end{split}
\end{align}
\emph{The case $\lambda_1\leq \lambda_2$ and $R_{12}=\sqrt{\lambda_2/\lambda_1}$.} By the definition of $R_{12}$, we have $\lambda |z|\leq 1$. Since $y_1=\lambda(y_2-z)$, we get $|y_1|\leq \lambda|y_2|+\lambda |z|\leq \l R+1$. By the assumption $R\gg1$, it holds that $\Omega_3=\emptyset$. Thus, the integral in \eqref{s-c-4} is zero.

\noindent
\emph{The case $\lambda_1\geq \lambda_2$ and $R_{12}=\sqrt{\lambda_1\lambda_2}|z_1-z_2|$.}  Let $\hat{y}=y_2/|z|$ and $e=z/|z|$, then $\Omega_3=\{|\hat{y}|\leq \frac{R}{|z|}, |\hat{y}-e|\geq \frac{R}{\lambda |z|}\}$. Notice that  $|z|\geq 1$
\begin{align}\label{s-c-5-2}
    \begin{split}
        \int_{\Omega_3} \frac{\l_1^{\frac{n+2}{2}}R^{-4}}{\langle y_1\rangle^{n-2}}\frac{\lambda_2^{\frac{n-2}{2}}R^{2-n}}{\langle y_2\rangle^{2}}dx=&\ R^{-n-2}\left(\frac{\lambda_1}{\l_2}\right)^\frac{6-n}{2}
        \int_{\Omega_3}\frac{1}{(\lambda^{-2}+|y_2-z|^2)^{\frac{n-2}{2}}}\frac{dy_2}{\langle y_2\rangle^2} \\
        \lesssim&\ R^{-n-2}\left(\frac{\lambda_1}{\lambda_2}\right)^{\frac{6-n}{2}}\left(\int_0^1tdt+\int_0^1t^{n-3}dt+\int_{1}^{R/|z|}\frac{dt}{t}\right)\\
        \thickapprox&\ R^{-n-2}\left(\frac{\lambda_1}{\lambda_2}\right)^{\frac{6-n}{2}}\log \left(\frac{R}{|z|}\right)\\
        \lesssim&\
        \begin{cases}
            R^{-n-2}\log R,\quad &n=6,\\
            R^{-n-2},\quad &n\geq 7.
        \end{cases}
    \end{split}
\end{align}
\emph{The case $\lambda_1\leq \lambda_2$ and $R_{12}=\sqrt{\lambda_1\lambda_2}|z_1-z_2|$.} In $\Omega_3$, we have $|y_2|\leq |y_1|=\lambda|y_2-z|\leq |y_2-z|$. It follows that $|y_2-z|\geq|z|/2$ and $\lambda|y_2-z|^2\thickapprox R_{12}^2$ in $\Omega_3$. Then
\begin{align}\label{s-c-5-3}
    \begin{split}
        \int_{\Omega_3} \frac{\l_1^{\frac{n+2}{2}}R^{-4}}{\langle y_1\rangle^{n-2}}\frac{\lambda_2^{\frac{n-2}{2}}R^{2-n}}{\langle y_2\rangle^{2}}dx=&\ R^{-n-2}\left(\frac{\lambda_1}{\l_2}\right)^2\int_{\Omega_3}\frac{1}{(\lambda^{-1}+\l|y_2-z|^2)^{\frac{n-2}{2}}}\frac{dy_2}{\langle y_2\rangle^2} \\
    \lesssim&\
    R^{-n-2}R_{12}^{2-n}\int_{|y_1|\leq R}\frac{dy_1}{\langle y_1\rangle^2}\lesssim R^{-n-2}.
    \end{split}
\end{align}
Together with \eqref{s-c-1}-\eqref{s-c-5-3}, we get \eqref{s-c-5}.

\noindent
(6) To prove \eqref{s-c-6}. We shall do it only for $n\geq 7$. Denote $\Omega_4=\{|y_1|\geq R, |y_2|\geq R\}$.

\noindent
\emph{The case $\lambda_1\geq\lambda_2$ and $R_{12}=\sqrt{\lambda_1/\lambda_2}$.} Set $z=\lambda_2(z_1-z_2)$ and $\lambda=\lambda_1/\lambda_2$.    Since $|z|\leq 1\ll R$, we have $|y_2-z|\geq |y_2|/2$ in $\Omega_4$. The integral in \eqref{s-c-6} is
\begin{align}\label{s-c-6-1}
    \begin{split}
         \int_{\Omega_4} \frac{\l_1^{\frac{n+2}{2}}R^{-4}}{\langle y_1\rangle^{n-2}}\frac{\lambda_2^{\frac{n-2}{2}}R^{-4}}{\langle y_2\rangle^{n-4}}dx=&\ R^{-8}R_{12}^{6-n}\int_{\Omega_4}\frac{1}{(\l^{-2}+|y_2-z|^2)^{\frac{n-2}{2}}}\frac{dy_2}{\langle y_2\rangle^{n-4}}\\
         \thickapprox&\
         R^{-8}R_{12}^{6-n}\int_{R}^{\infty}\frac{dt}{t^{n-5}}\lesssim R^{4-2n},\quad n\geq 7.
    \end{split}
\end{align}
\emph{The case $\lambda_1\leq\lambda_2$ and $R_{12}=\sqrt{\lambda_2/\lambda_1}$.} Set $z=\lambda_1(z_2-z_1)$ and $\lambda=\lambda_2/\lambda_1$. As before, we have
\begin{align}\label{s-c-6-2}
    \begin{split}
         \int_{\Omega_4} \frac{\l_1^{\frac{n+2}{2}}R^{-4}}{\langle y_1\rangle^{n-2}}\frac{\lambda_2^{\frac{n-2}{2}}R^{-4}}{\langle y_2\rangle^{n-4}}dx=&\ R^{-8}R_{12}^{6-n}\int_{\Omega_4}\frac{1}{\langle y_1\rangle^{n-2}}\frac{dy_1}{(\l^{-2}+|y_1-z|^2)^{\frac{n-4}{2}}}\\
         \thickapprox&\
         R^{-8}R_{12}^{6-n}\int_{R}^{\infty}\frac{dt}{t^{n-5}}\lesssim R^{4-2n},\quad n\geq 7.
    \end{split}
\end{align}
\emph{The case $\lambda_1\geq\lambda_2$ and $R_{12}=\sqrt{\lambda_1\lambda_2}|z_1-z_2|$.}  Set $z=\lambda_1(z_2-z_1)$ and $\lambda=\lambda_2/\lambda_1$. We also set $\Omega_4=\Omega_{41}\cup\Omega_{42}\cup\Omega_{43}$ where $\Omega_{41}=\{R\leq |y_1|\leq \frac{|z|}{2} \}$, $\Omega_{42}=\{R/\lambda\leq |y_1-z|\leq \frac{|z|}{2} \}$ and $\Omega_{43}=\{ |y_1|\geq \frac{|z|}{2}, |y_1-z|\geq \frac{|z|}{2} \}$. Remember that $R_{12}=\sqrt{\lambda}|z|$ and $|z|\geq \lambda^{-1}$. We have
\begin{align}\label{s-c-6-3}
    \begin{split}
         \int_{\Omega_4}& \frac{\l_1^{\frac{n+2}{2}}R^{-4}}{\langle y_1\rangle^{n-2}}\frac{\lambda_2^{\frac{n-2}{2}}R^{-4}}{\langle y_2\rangle^{n-4}}dx=\ R^{-8}\lambda^{\frac{6-n}{2}}\int_{\Omega_4}\frac{1}{\langle y_1\rangle^{n-2}}\frac{dy_1}{(\l^{-2}+|y_1-z|^2)^{\frac{n-4}{2}}}\\
         \lesssim&\ R^{-8}\lambda^{\frac{6-n}{2}}\left(|z|^{4-n}\int_{\Omega_{41}}\frac{dy_1}{\langle y_1\rangle^{n-2}}+|z|^{2-n}\int_{\Omega_{42}}\frac{dy_1}{\langle y_1-z\rangle^{n-4}}+\int_{|y_1|\geq |z|}\frac{dy_1}{|y_1|^{2n-6}}\right)\\
         \thickapprox&\ R^{-8}R_{12}^{6-n}\lesssim R^{-n-2},\quad n\geq 7.
    \end{split}
\end{align}
In the fourth line of \eqref{s-c-6-3}, we use the fact that, for each $y_1\in\Omega_{43}$, that $|y_1-z|\geq \frac{|z|}{2}\geq \frac{|y_1|}{4}$ if $|y_1|\leq 2|z|$ while it holds that  $|y_1-z|\geq |y_1|-|z|\geq \frac{|y_1|}{2}$ if $|y_1|\geq 2|z|$.

\noindent
\emph{The case $\lambda_1\leq\lambda_2$ and $R_{12}=\sqrt{\lambda_1\lambda_2}|z_1-z_2|$.}  Set $z=\lambda_2(z_1-z_2)$ and $\lambda=\lambda_1/\lambda_2$. We also set $\Omega_4=\Omega_{41}\cup\Omega_{42}\cup\Omega_{43}$ where $\Omega_{41}=\{R\leq |y_2|\leq \frac{|z|}{2} \}$, $\Omega_{42}=\{R/\lambda\leq |y_2-z|\leq \frac{|z|}{2} \}$ and $\Omega_{43}=\{ |y_2|\geq \frac{|z|}{2}, |y_2-z|\geq \frac{|z|}{2} \}$. As before,  we have
\begin{align}\label{s-c-6-4}
    \begin{split}
         \int_{\Omega_4}& \frac{\l_1^{\frac{n+2}{2}}R^{-4}}{\langle y_1\rangle^{n-2}}\frac{\lambda_2^{\frac{n-2}{2}}R^{-4}}{\langle y_2\rangle^{n-4}}dx=\  R^{-8}\lambda^{\frac{6-n}{2}}\int_{\Omega_4}\frac{1}{\langle y_2\rangle^{n-4}}\frac{dy_2}{(\l^{-2}+|y_2-z|^2)^{\frac{n-2}{2}}}\\
         \lesssim&\ R^{-8}\lambda^{\frac{6-n}{2}}\left(|z|^{2-n}\int_{\Omega_{41}}\frac{dy_1}{\langle y_2\rangle^{n-4}}+|z|^{4-n}\int_{\Omega_{42}}\frac{dy_1}{\langle y_2-z\rangle^{n-2}} +\int_{\Omega_{43}}\frac{dy_1}{|y_2|^{2n-6}}\right)\\
         \thickapprox&\ R^{-8}R_{12}^{6-n}\lesssim R^{-n-2},\quad n\geq 7.
    \end{split}
\end{align}
Together with \eqref{s-c-6-1}-\eqref{s-c-6-4}, we get \eqref{s-c-6}.
\end{proof}
The inequality \eqref{s-c-2} and  \eqref{s-c-6} shows that we should keep the decay of error term $(\sum U_i)^2-\sum U_i^2=\sum_{i\neq j}U_iU_j$ near infinity in dimension six. In the following lemma, we directly  estimate $\int U_1U_2\rho_0$.
\begin{lemma}\label{lem:6-dim}
Suppose $n=6$ and $1\ll R\leq \frac{1}{2}\min\{R_{12}, R_{13},R_{23}\}$,   then
\begin{align}
        \sum_{j=1}^3\int U_1(x)U_2(x)\frac{\l_j^{2}R^{-4}}{\langle y_j\rangle^2}dx\lesssim&\ R^{-8}\log R.
        \label{6-dim-1}
\end{align}
\end{lemma}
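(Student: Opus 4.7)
The plan is to reduce each of the three summands in the sum to an explicit integral estimate, exploiting two peculiarities of dimension six: that $p=2$ (so $\sigma^{2}-\sum U_i^{2}=\sum_{i\neq j}U_iU_j$ exactly), and that the weight satisfies the identity
\[
\frac{\l_k^{2}}{\langle y_k\rangle^{2}} \;=\;\frac{1}{\l_k^{-2}+|x-z_k|^{2}} \;=\; c_0\,\l_k\,U_k(x)^{1/2},
\]
so the weight is (up to a constant) the square root of a bubble. This identity is exactly why $w_{k,1}$ and $w_{k,2}$ coincide at the matching boundary in $n=6$, which ultimately forces the $\log R$ factor. By the symmetry $U_1\leftrightarrow U_2$, the terms with $j=1$ and $j=2$ are equal, so there are essentially two cases: $j\in\{1,2\}$ and $j=3$.

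For $j=1$, the identity above rewrites the integral as $c_0\,R^{-4}\,\l_1\int_{\mathbb{R}^6}U_1^{3/2}U_2\,dx$. I would rescale via $y_1=\l_1(x-z_1)$ which recasts $U_2(x)=C\l_2^{2}/(1+|\xi+\lambda y_1|^{2})^{2}$ with $\lambda=\l_2/\l_1$ and $\xi=\l_2(z_1-z_2)$, reducing the integral (after the powers of $\l_1$ cancel) to the dimensionless
\[
\lambda^{2}\int_{\mathbb{R}^{6}}\frac{dy_1}{\langle y_1\rangle^{6}\bigl(1+|\xi+\lambda y_1|^{2}\bigr)^{2}}.
\]
I would split $\mathbb{R}^6$ into four natural pieces (small $|y_1|$, the intermediate annulus, the ball of radius $1/\lambda$ around $y_1^{\ast}:=-\xi/\lambda$, and the far field), and estimate each piece in the three possible configurations of $(U_1,U_2)$: tower with $\l_1\geq\l_2$, tower with $\l_2\geq\l_1$, and cluster. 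The $\log R_{12}$ contribution arises precisely from the annulus on which the integrand behaves like $|y_1|^{-6}$, whose integration in $\mathbb{R}^6$ generates a logarithm; the other pieces are of strictly smaller order. The outcome in every case is $\lambda^{2}J\lesssim R_{12}^{-4}\log R_{12}$. Since $x\mapsto x^{-4}\log x$ is decreasing on $[e^{1/4},\infty)$ and $R_{12}\geq 2R\gg 1$, this monotonicity gives $R_{12}^{-4}\log R_{12}\leq R^{-4}\log R$, proving the $j=1$ contribution is $\lesssim R^{-8}\log R$.

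For $j=3$, the bubble $U_3$ is well separated from $U_1,U_2$ through the hypothesis $R_{13},R_{23}\geq 2R$. I would decompose $\mathbb{R}^{6}=B_1\cup B_2\cup \Omega$ where $B_i$ is a ball around $z_i$ of radius $|z_1-z_2|/2$ and $\Omega$ is the complement. On $B_i$ one has $|x-z_j|\geq|z_i-z_j|/2$ for $j\neq i$, so either $\l_3|z_i-z_3|\gtrsim 1$ and then $\l_3^{2}/\langle y_3\rangle^{2}\lesssim|z_i-z_3|^{-2}$ pointwise, or $\l_3|z_i-z_3|\lesssim 1$ and $\l_3^{2}/\langle y_3\rangle^{2}\lesssim\l_3^{2}$; in either case the resulting constant is controlled by a suitable power of $R_{i3}^{-1}$ via the definition of $R_{i3}$. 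Combining this with the two-bubble estimate $\int_{B_i}U_1U_2\,dx\lesssim R_{12}^{-2}$ (which comes from a direct computation in the rescaled variable, exactly analogous to the previous case) produces a bound of order $R_{12}^{-2}R_{i3}^{-2}\leq R^{-4}$. The $\Omega$ contribution is estimated by the far-field decay of $U_1U_2$ and is of the same order. So the $j=3$ contribution is actually $\lesssim R^{-8}$, which is even stronger than what is required.

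The main obstacle is the proliferation of sub-cases: each pair of the three bubbles can be in a tower or a cluster configuration, with different orderings of heights, so the $j=1$ annular analysis must be done once for each of the three pairwise configurations of $(U_1,U_2)$. The logarithmic factor is sharp and specific to $n=6$, coming from the dimension-matched decay in the annulus; in $n\geq 7$ the corresponding integrand decays strictly faster than $|y_1|^{-n}$ in the dominant annulus and no log appears.
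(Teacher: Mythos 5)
Your reduction via the identity $\l_j^2/\langle y_j\rangle^2\approx\l_j\,U_j^{1/2}$ in $n=6$ is a neat observation, and it organizes the problem differently from the paper's proof, which first sharpens the pointwise bound on $U_1U_2$ at scale $R_{12}^2$ (keeping the full $\langle y_i\rangle^{-6}$ decay at infinity) and then integrates that bound against $W$ via Lemma~\ref{lem:s-U-rho} and a further case analysis. Your $j\in\{1,2\}$ step is essentially sound: the scale-invariant quantity $\l_1\int U_1^{3/2}U_2$ is indeed $O(R_{12}^{-4}\log R_{12})$ in all three pairwise configurations, although you have asserted rather than verified the cluster case, which has three nontrivial annular regions.

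The $j=3$ argument, however, has a genuine gap and the conclusion you claim for it is false. The inequality $|x-z_3|\geq|z_1-z_3|/2$ on $B_1$ requires $|z_1-z_2|\leq|z_1-z_3|$; when $U_3$ towers over $U_1$ the center $z_3$ falls inside $B_1$ and the inequality fails. Take $\l_1=\l_2=1$, $z_1=0$, $z_2=M e_1$, $\l_3=M^4$, $z_3=0$: then $R_{12}=M$, $R_{13}=M^2$, $R_{23}\approx M^3$, so the hypotheses hold with $R\approx M$, yet you land in your ``$\l_3|z_1-z_3|\lesssim 1$'' case and the pointwise bound you invoke is $\l_3^2/\langle y_3\rangle^2\lesssim\l_3^2=M^8=R_{13}^4$, a large \emph{positive} power of $R_{13}$, not any power of $R_{13}^{-1}$. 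More decisively, a direct computation gives $\int_{B_1}U_1U_2\,(\l_3^2/\langle y_3\rangle^2)\,dx\approx M^{-4}\log M$ in this configuration, so after multiplying by $R^{-4}$ the $j=3$ term contributes $\approx R^{-8}\log R$, the same leading order as $j\in\{1,2\}$, contradicting your claim that the $j=3$ contribution is ``actually $\lesssim R^{-8}$''. Your proposed intermediate bound $\lesssim R_{12}^{-2}R_{13}^{-2}=M^{-6}$ is smaller than the truth and therefore not an upper bound at all. The root cause is that a decomposition at the single spatial scale $|z_1-z_2|$ cannot detect $U_3$'s own scale and center; one must also excise a neighborhood of $z_3$ at scale $\l_3^{-1}$, which is exactly what the paper's multi-scale splitting and the pairwise tower/cluster sub-cases in its proof of this lemma are designed to accomplish.
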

\begin{proof}
From the proof of Proposition  \ref{prop:on-weight} (in particular \eqref{h-t-4} and \eqref{h-c-4}), we see that, for $n=6$,
\begin{align}
    \begin{split}
        U_1(x)U_2(x)\lesssim&\
        \sum_{i=1}^2 \left(\frac{\lambda_i^{4}R_{12}^{-4}}{\langle y_i\rangle^4}\chi_{\{|y_i|\leq  R_{12}^2\}}+\frac{\lambda_i^4R_{12}^{-4}}{\langle y_i
        \rangle^6}\chi_{\{|y_i|\geq R_{12}^2\}}\right).
    \end{split}
\end{align}
Then
\begin{align*}
\begin{split}
     &\sum_{j=1}^3\int U_1U_2\frac{\l_j^{2}R^{-4}}{\langle y_j\rangle^2}=\
  \sum_{i=1}^2\sum_{j=1}^3  \left(\int_{ |y_i|\leq R_{12}^2}\frac{\lambda_i^{4}R_{12}^{-4}}{\langle y_i\rangle^4}\frac{\lambda_j^{2}R^{-4}}{\langle y_j\rangle^2}+  \int_{|y_i|\geq R_{12}^2}\frac{\lambda_i^{4}R_{12}^{-4}}{\langle y_i\rangle^6}\frac{\lambda_j^{2}R^{-4}}{\langle y_j\rangle^2}\right).
   \end{split}
\end{align*}
Consider the first integral on the RHS of the above equation.
By \eqref{s-c-3} and \eqref{s-c-4} of Lemma \ref{lem:s-U-rho}, it is already bounded above by $R^{-8}\log R$ when integrating on the region $\{|y_i|<R\}$ or $\{|y_j|<R\}$. The same thing happens for the second integral on $\{|y_j|<R\}$. Therefore, to establish \eqref{6-dim-1}, it reduces to prove the following
\begin{align}\label{U1U2rho}
  \sum_{i=1}^2\sum_{j=1}^3 \left(\int_{\substack{R\leq |y_i|\leq R_{12}^2\\
  |y_j|\geq R}}\frac{\lambda_i^{4}R_{12}^{-4}}{\langle y_i\rangle^4}\frac{\lambda_j^{2}R^{-4}}{\langle y_j\rangle^2}+\int_{\substack{|y_i|\geq R_{12}^2\\ |y_j|\geq R}}\frac{\lambda_i^{4}R_{12}^{-4}}{\langle y_i\rangle^6}\frac{\lambda_j^{2}R^{-4}}{\langle y_j\rangle^2}\right)\lesssim R^{-8}\log R.
\end{align}
It suffices to compute the following four types of integral.

\noindent
(1) Suppose $i=j$ for the first integral in \eqref{U1U2rho}. Since $n=6$, we have
\begin{align}\label{6d-1-1}
    \begin{split}
        \int_{R\leq |y_i|\leq R_{12}^2}\frac{\lambda_i^4R_{12}^{-4}}{\langle y_i\rangle^4}\frac{\lambda_i^2R^{-4}}{\langle y_i\rangle^2}dx= R_{12}^{-4}R^{-4}\int_{R\leq |y_i|\leq R_{12}^2}\frac{dy_1}{\langle y_i\rangle^6}\lesssim R^{-8}\log R.
    \end{split}
\end{align}

\noindent
(2)  Suppose $i=j$ for the second integral in \eqref{U1U2rho}. Since $n=6$, we have
\begin{align}\label{6d-1-2}
    \begin{split}
        \int_{ |y_i|\geq R_{12}^2}\frac{\lambda_i^4R_{12}^{-4}}{\langle y_i\rangle^6}\frac{\lambda_i^2R^{-4}}{\langle y_i\rangle^2}dx= R_{12}^{-4}R^{-4}\int_{ |y_i|\geq R_{12}^2}\frac{dy_i}{\langle y_i\rangle^8}\lesssim R^{-12}.
    \end{split}
\end{align}
(3) Suppose $i\neq j$ for the first integral in \eqref{U1U2rho}. Denote $\Omega_1=\{x: R\leq |y_i|\leq R_{12}^2, |y_j|\geq R\}$, we need to consider three cases.

\noindent
\emph{The case $\lambda_i\geq\lambda_j$ and $R_{ij}=\sqrt{\lambda_i/\lambda_j}$.} Set  $z=\lambda_j(z_i-z_j)$ and $\lambda=\lambda_i/\lambda_j$. Since $|z|\leq 1$, we have $|y_j-z|/2\leq |y_j|\leq 2|y_j-z|$.  Then $\Omega_1\subset\{R/2\lambda\leq|y_j|\leq 2R_{12}^2/\lambda\}$. Similar to \eqref{s-c-6-1}, we have
\begin{align}\label{6d-1-3-1}
    \begin{split}
        \int_{\Omega_1}\frac{\lambda_i^{4}R_{12}^{-4}}{\langle y_i\rangle^4}\frac{\lambda_j^{2}R^{-4}}{\langle y_j\rangle^2}dx
   =&\
   R_{12}^{-4}R^{-4}\int_{\Omega_1} \frac{1}{(\lambda^{-2}+|y_j-z|^2)^2}\frac{dy_j}{\langle y_j\rangle^2}\\
   \lesssim&\  R_{12}^{-4}R^{-4}\int_{R/2\l}^{2R_{12}^2/\l}t^{-1}dt\lesssim R^{-8}\log R.
    \end{split}
\end{align}
\emph{The case $\lambda_i\leq\lambda_j$ and $R_{ij}=\sqrt{\lambda_j/\lambda_i}$.} Set $z=\lambda_i(z_j-z_i)$ and $\lambda=\lambda_j/\lambda_i$. As before, we have
\begin{align}\label{6d-1-3-2}
    \begin{split}
        \int_{\Omega_1}\frac{\lambda_i^{4}R_{12}^{-4}}{\langle y_i\rangle^4}\frac{\lambda_j^{2}R^{-4}}{\langle y_j\rangle^2}dx
   =&\
   R_{12}^{-4}R^{-4}\int_{\Omega_1} \frac{1}{\langle y_i\rangle^4}\frac{dy_i}{\lambda^{-2}+ |y_i-z|^2}\\
   \lesssim&\  R_{12}^{-4}R^{-4}\int_{R}^{R_{12}^2}t^{-1}dt\lesssim R^{-8}\log R.
    \end{split}
\end{align}
\emph{The case  $R_{ij}=\sqrt{\lambda_i\lambda_j}|z_i-z_j|$.}  Set $z=\lambda_i(z_j-z_i)$ and $\lambda=\lambda_j/\lambda_i$. We also set $\Omega_1=\Omega_{11}\cup\Omega_{12}\cup\Omega_{13}$ where $\Omega_{11}=\{R\leq |y_i|\leq \frac{|z|}{2} \}$, $\Omega_{12}=\{R/\l\leq |y_i-z|\leq \frac{|z|}{2} \}$ and $\Omega_{13}=\{  \frac{|z|}{2}\leq |y_i|\leq R_{12}^2, |y_i-z|\geq \frac{|z|}{2} \}$.   Then, similar to \eqref{s-c-6-3}, we have
\begin{align}\label{6d-1-3-3}
    \begin{split}
         \int_{\Omega_1}&\frac{\lambda_i^{4}R_{12}^{-4}}{\langle y_i\rangle^4}\frac{\lambda_j^{2}R^{-4}}{\langle y_j\rangle^2}dx
   =\  R_{12}^{-4}R^{-4}\int_{\Omega_1}\frac{1}{\langle y_i\rangle^{4}}\frac{dy_i}{(\l^{-2}+|y_i-z|^2)}\\
         \lesssim&\ R_{12}^{-4}R^{-4}\left(|z|^{-2}\int_{\Omega_{11}}\frac{dy_i}{\langle y_i\rangle^{4}}+|z|^{-4}\int_{\Omega_{12}}\frac{dy_i}{\langle y_i-z\rangle^{2}} +\int_{\Omega_{13}}\frac{dy_i}{|y_i|^{6}}\right)\\
         \lesssim&\
         \begin{cases}
         R^{-8}\quad &\text{if}\ 2R_{12}^2\leq |z|,\\
         R^{-8}\log R\quad &\text{if}\ 2R_{12}^2\geq |z|.
         \end{cases}
    \end{split}
\end{align}
Together with \eqref{6d-1-3-1}, \eqref{6d-1-3-2} and \eqref{6d-1-3-3}, we have
\begin{align}\label{6d-1-3}
    \begin{split}
        \int_{\substack{R\leq |y_i|\leq R_{12}^2\\
   |y_j|\geq R}}\frac{\lambda_i^{4}R_{12}^{-4}}{\langle y_i\rangle^4}\frac{\lambda_j^{2}R^{-4}}{\langle y_j\rangle^2}dx
  \lesssim R^{-8}\log R.
    \end{split}
\end{align}
(4)
Suppose $i\neq j$ for the second integral in \eqref{U1U2rho}.
Denote $\Omega_2=\{x:  |y_i|\geq R_{12}^2, |y_j|\geq R\}$. we need to consider three cases.

\noindent
\emph{The case $\lambda_i\geq \lambda_j$.}
 Let $z=\lambda_i(z_j-z_i)$, $\lambda=\lambda_j/\lambda_i\leq1$, $U=U(y_i)=U[0,1](y_i)$ and $\tilde{U}=\tilde{U}(y_i)=U[z,\lambda](y_i)$.
 Then
\begin{align}\label{6d-1-4-1-holder}
    \begin{split}
          \int_{\Omega_2}\frac{\lambda_i^{4}R_{12}^{-4}}{\langle y_i\rangle^6}&\frac{\lambda_j^{2}R^{-4}}{\langle y_j\rangle^2}dx =\   R_{12}^{-4}R^{-4} \lambda \int_{\Omega_2}U^{\frac{3}{2}}\tilde{U}^{\frac{1}{2}}dy_i\\
  \leq&\
  R^{-8}\left(\int_{|y_i|\geq R_{12}^{2}}\frac{dy_i}{\langle y_i\rangle^{\frac{20}{3}}}\right)^{\frac{3}{4}}
  \left(\int U\tilde{U}^{2}dy_i\right)^{\frac{1}{4}}
  \lesssim R^{-10}.
    \end{split}
\end{align}
\emph{The case $\lambda_i\leq\lambda_j$ and $R_{ij}=\sqrt{\lambda_j/\lambda_i}$.} Set $z=\lambda_i(z_j-z_i)$ and $\lambda=\lambda_j/\lambda_i$. Similar to \eqref{s-c-6-1}, we have
\begin{align}\label{6d-1-4-2}
    \begin{split}
        \int_{\Omega_2}\frac{\lambda_i^{4}R_{12}^{-4}}{\langle y_i\rangle^6}\frac{\lambda_j^{2}R^{-4}}{\langle y_j\rangle^2}dx
  =&\
  R_{12}^{-4}R^{-4}\int_{\Omega_2} \frac{1}{\langle y_i\rangle^6}\frac{dy_i}{\lambda^{-2}+ |y_i-z|^2}\\
  \lesssim&\  R^{-8}\int_{R_{12}^2}^{\infty}t^{-3}dt\lesssim R^{-12}.
    \end{split}
\end{align}
\emph{The case $\lambda_i\leq\lambda_j$ and $R_{ij}=\sqrt{\lambda_i\lambda_j}|z_i-z_j|$.}  Set $z=\lambda_j(z_i-z_j)$ and $\lambda=\lambda_i/\lambda_j$. We also set $\Omega_2=\Omega_{21}\cup\Omega_{22}\cup\Omega_{23}$ where $\Omega_{21}=\{R\leq |y_j|\leq \frac{|z|}{2} \}$, $\Omega_{22}=\{R/\lambda\leq |y_j-z|\leq \frac{|z|}{2} \}$ and $\Omega_{23}=\{ |y_j|\geq \max\{ \frac{|z|}{2}, R_{12}^2\}, |y_j-z|\geq \frac{|z|}{2} \}$. Then, noticing $|z|\geq R$, we have
\begin{align}\label{6d-1-4-4}
    \begin{split}
      \int_{\Omega_2}\frac{\lambda_i^{4}R_{12}^{-4}}{\langle y_i\rangle^5}&\frac{\lambda_j^{2}R^{-4}}{\langle y_j\rangle^2}dx
  =\   R_{12}^{-4}R^{-4}\int_{\Omega_2}\frac{1}{(\l^{-2}+|y_j-z|^2)^{6}}\frac{dy_j}{\langle y_j\rangle^{2}}\\
         \lesssim&\ R^{-8}\left(\frac{1}{|z|^6}\int_{\Omega_{21}}\frac{dy_j}{\langle y_j\rangle^{2}}+\frac{1}{|z|^2}\int_{\Omega_{22}}\frac{dy_j}{| y_j-z|^{6}}+\int_{\Omega_{23}}\frac{dy_j}{|y_j|^8}\right)
         \lesssim
         R^{-10}.
    \end{split}
\end{align}
Together with \eqref{6d-1-4-1-holder}-\eqref{6d-1-4-4}, we have
        \begin{align}\label{6d-1-4}
    \begin{split}
      \int_{\substack{|y_i|\geq R_{12}^2\\ |y_j|\geq R}}\frac{\lambda_i^{4}R_{12}^{-4}}{\langle y_i\rangle^6}\frac{\lambda_j^{2}R^{-4}}{\langle y_j\rangle^2}dx
  \lesssim R^{-10}.
    \end{split}
\end{align}
\end{proof}

\begin{lemma}\label{lem:V-U}
Suppose $n\geq 6$ and $1\ll R\leq R_{12}/2$, we have
\begin{align}
\int_{|y_1|\leq R} \frac{\l_1^{\frac{n+2}{2}}R^{2-n}}{\langle y_1\rangle^4}\frac{\lambda_1^{\frac{n-2}{2}}}{\langle y_1\rangle^{n-2}}dx\thickapprox&\ R^{2-n},
\label{V-U-1}\\
\int_{|y_1|\geq R} \frac{\l_1^{\frac{n+2}{2}}R^{-4}}{\langle y_1\rangle^{n-2}}\frac{\lambda_1^{\frac{n-2}{2}}}{\langle y_1\rangle^{n-2}}dx\thickapprox&\ R^{-n},
\label{v-U-2}\\
 \int_{|y_1|\leq R} \frac{\l_1^{\frac{n+2}{2}}R^{2-n}}{\langle y_1\rangle^4}\frac{\lambda_2^{\frac{n-2}{2}}}{\langle y_2\rangle^{n-2}}dx
 \lesssim&\ R^{-n},\label{V-U-3}\\
    \int_{|y_1|\geq R} \frac{\l_1^{\frac{n+2}{2}}R^{-4}}{\langle y_1\rangle^{n-2}}\frac{\lambda_2^{\frac{n-2}{2}}}{\langle y_2\rangle^{n-2}}dx\lesssim&\ R^{2-n}.
    \label{V-U-4}
\end{align}
\end{lemma}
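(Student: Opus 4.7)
The four estimates cleanly split into two groups. The first two, \eqref{V-U-1} and \eqref{v-U-2}, involve a single bubble and follow from the rescaling $y_1=\lambda_1(x-z_1)$, $dx=\lambda_1^{-n}\,dy_1$: both reduce to scale-free integrals whose sharp values are $R^{2-n}$ and $R^{-n}$ respectively, since the power of $\langle y_1\rangle$ exceeds $n$ in both cases ($n+2>n$ for the core integral and $2(n-2)>n$ for the tail integral when $n\geq 6$).

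The cross-bubble estimates \eqref{V-U-3} and \eqref{V-U-4} are more delicate, but they follow the template of Lemma \ref{lem:s-U-rho}. Rescaling to $y_1$ coordinates and substituting the explicit form of $U_2$ from \eqref{t:U2atU1}, \eqref{t:U1atU2}, \eqref{c:U2atU1}, \eqref{c:U1atU2} (bubble tower or cluster, $\lambda_1\gtrless\lambda_2$) reduces each to four sub-cases. For \eqref{V-U-3}, the key observation is that in every configuration a direct computation yields
\[
\lambda_1^{(2-n)/2}\int_{|y_1|\leq R}\frac{U_2(x(y_1))}{\langle y_1\rangle^{4}}\,dy_1\lesssim R_{12}^{-2},
\]
either because $U_2\lesssim \lambda_1^{(n-2)/2}R_{12}^{2-n}$ uniformly on $\{|y_1|\leq R\}$ (giving $R_{12}^{2-n}R^{n-4}\leq R_{12}^{-2}$ via $R\leq R_{12}$), or because in the remaining tower case ($\lambda_2>\lambda_1$, $|\xi_2|\leq 1$) the local contribution from $U_2$'s peak at $y_1=\xi_2$ integrates to $R_{12}^{-2}$ (of the same order) and the tail contributes $R_{12}^{2-n}$, which is lower order. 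Inserting this bound and using $R\leq R_{12}/2$ yields \eqref{V-U-3}: $R^{2-n}R_{12}^{-2}\lesssim R^{-n}$.

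The estimate \eqref{V-U-4} is the most work. I would decompose $\{|y_1|\geq R\}$ into three subregions in each of the four configurations --- $U_1$'s far tail, a neighborhood of $U_2$'s rescaled center $\xi_2$, and the far exterior --- exactly as in \eqref{s-c-6-1}--\eqref{s-c-6-4} of Lemma \ref{lem:s-U-rho}. Each of the twelve resulting pieces reduces to an elementary integral of a power of $\langle\,\cdot\,\rangle^{-1}$ times a constant in the $\lambda$'s, and they sum to at most $R^{2-n}$. The main obstacle is this bookkeeping over configurations and sub-regions, but since the calculations are formally identical to those already executed in Lemma \ref{lem:s-U-rho}, the same template works verbatim and no new ideas are required.
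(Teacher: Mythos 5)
The easy cases \eqref{V-U-1}, \eqref{v-U-2} are correct and identical to the paper's treatment, and for \eqref{V-U-4} you correctly identify that the three-region decomposition used in \eqref{s-c-6-1}--\eqref{s-c-6-4} of Lemma \ref{lem:s-U-rho} carries over (the integrand powers differ, so ``verbatim'' is a slight overstatement, but the template is the same and the paper proceeds analogously via \eqref{V-U-4-1}--\eqref{V-U-4-4}).

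There is, however, a concrete error in your case analysis for \eqref{V-U-3}. You claim $U_2\lesssim\lambda_1^{(n-2)/2}R_{12}^{2-n}$ holds uniformly on $\{|y_1|\leq R\}$ in every configuration except the bubble-tower case with $\lambda_2>\lambda_1$. This is false: the bubble-\emph{cluster} case with $\lambda_2>\lambda_1$ also fails the uniform bound. In that case the rescaled center $\xi_2=\lambda_1(z_2-z_1)$ has $|\xi_2|=R_{12}\sqrt{\lambda_1/\lambda_2}\in[1,R_{12}]$, and for $\lambda_2/\lambda_1$ large with $R$ close to $R_{12}/2$ one has $|\xi_2|<R$, so $U_2$'s peak lies inside the region; there $U_2(\xi_2)=\lambda_2^{(n-2)/2}\gg\lambda_1^{(n-2)/2}R_{12}^{2-n}$. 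Your ``remaining tower case'' argument exploits $|\xi_2|\leq1$ and $\langle y_1\rangle^{-4}\approx1$ near the peak, which does not apply here since $|\xi_2|$ can be much larger than $1$. The paper treats this fourth sub-case separately in \eqref{V-U-3-4}, passing to $\hat y=y_1/|\xi_2|$ coordinates and integrating around the unit sphere $\{|\hat y-e|\ll1\}$; you would need to add a similar computation. (Incidentally, your assertion that the tower-$\lambda_2>\lambda_1$ peak contributes $R_{12}^{-2}$ is also numerically off — the near-field contribution is of order $R_{12}^{2-n}$, which happens to be even smaller and so the conclusion still holds, but it is worth getting the power counting right.)
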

\begin{proof}
(1) The \eqref{V-U-1} and \eqref{v-U-2} are derived by  simple  computations similar to \eqref{s-c-1} and \eqref{s-c-2}.

\noindent
(2) Let $z=\lambda_1(z_2-z_1)$ and $\lambda=\lambda_2/\lambda_1$.

\noindent
\emph{The case $\lambda_1\geq\lambda_2$ and $R_{12}=\sqrt{\lambda_1/\lambda_2}$.} We have
\begin{align}\label{V-U-3-1}
    \begin{split}
        \int_{|y_1|\leq R} \frac{\l_1^{\frac{n+2}{2}}R^{2-n}}{\langle y_1\rangle^4}\frac{\lambda_2^{\frac{n-2}{2}}}{\langle y_2\rangle^{n-2}}dx
        =&\
        R^{2-n}R_{12}^{2-n}\int_{|y_1|\leq R}\frac{1}{\langle y_1\rangle^4}\frac{dy_1}{(1+\lambda^2|y_1-z|^2)^{\frac{n-2}{2}}}\\
        \lesssim&\ R^{4-2n}\int_{|y_1|\leq R}\frac{dy_1}{\langle y_1\rangle^4}\thickapprox R^{-n}.
    \end{split}
\end{align}
\emph{The case $\lambda_1\leq\lambda_2$ and $R_{12}=\sqrt{\lambda_2/\lambda_1}$.}
 We have $|z|\leq1$ . Then
\begin{align}\label{V-U-3-2}
    \begin{split}
        \int_{|y_1|\leq R} \frac{\l_1^{\frac{n+2}{2}}R^{2-n}}{\langle y_1\rangle^4}\frac{\lambda_2^{\frac{n-2}{2}}}{\langle y_2\rangle^{n-2}}dx
        =&\
        R^{2-n}R_{12}^{2-n}\int_{|y_1|\leq R}\frac{1}{\langle y_1\rangle^4}\frac{dy_1}{(\lambda^{-2}+|y_1-z|^2)^{\frac{n-2}{2}}}\\
        \lesssim&\ R^{4-2n} \left(\int_{2}^Rt^{n-5}dt+\int_0^2tdt\right)\thickapprox R^{-n}.
    \end{split}
\end{align}
\emph{The case $\lambda_1\geq\lambda_2$ and $R_{12}=\sqrt{\lambda_1\lambda_2}|z_1-z_2|$.}
 Let $\hat{y}=y_1/|z|$ and $e=z/|z|$, we have $\lambda\leq1$ and $\sqrt{\lambda}|z|\geq 2R$, then
\begin{align}\label{V-U-3-3}
    \begin{split}
        \int_{|y_1|\leq R} \frac{\l_1^{\frac{n+2}{2}}R^{2-n}}{\langle y_1\rangle^4}\frac{\lambda_2^{\frac{n-2}{2}}}{\langle y_2\rangle^{n-2}}dx
        \thickapprox&\
        \int_{|\hat{y}|\leq R/|z|}\frac{R^{2-n}\lambda^{\frac{2-n}{2}}|z|^{-2}}{(|z|^{-1}+|\hat{y}|)^4}\frac{d\hat{y}}{(\lambda^{-1}|z|^{-1}+|\hat{y}-e|)^{n-2}}\\
        \lesssim&\ R^{2-n}|z|^{-2}\lambda^{\frac{2-n}{2}} \left(\int_{0}^{\sqrt{\lambda}}t^{n-5}dt\right)\\
        \thickapprox&\ R^{2-n}(\sqrt{\lambda}|z|)^{-2}\leq R^{-n}.
    \end{split}
\end{align}
\emph{The case $\lambda_1\leq\lambda_2$ and $R_{12}=\sqrt{\lambda_1\lambda_2}|z_1-z_2|$.} As before, we have $\lambda\geq1$ and $\sqrt{\lambda}|z|\geq2R$, then
\begin{align}\label{V-U-3-4}
    \begin{split}
        \int_{|y_1|\leq R} \frac{\l_1^{\frac{n+2}{2}}R^{2-n}}{\langle y_1\rangle^4}&\frac{\lambda_2^{\frac{n-2}{2}}}{\langle y_2\rangle^{n-2}}dx
        \thickapprox\
        \int_{|\hat{y}|\leq R/|z|}\frac{R^{2-n}\lambda^{\frac{2-n}{2}}|z|^{-2}}{(|z|^{-1}+|\hat{y}|)^4}\frac{dy_1}{(\lambda^{-1}|z|^{-1}+|\hat{y}-e|)^{n-2}}\\
        \lesssim&\ R^{2-n}|z|^{-2}\lambda^{\frac{2-n}{2}} \left(\int_{2}^{\sqrt{\lambda}}t^{n-5}dt+\int_{0}^2tdt\right)
        \thickapprox  R^{-n}.
    \end{split}
\end{align}
Together with \eqref{V-U-3-1}-\eqref{V-U-3-4}, we get \eqref{V-U-3}.

(3) Let $z=\lambda_2(z_1-z_2)$ and $\lambda=\lambda_1/\lambda_2$.

\noindent
\emph{The case $\lambda_1\geq \lambda_2$ and $R_{12}=\sqrt{\lambda_1/\lambda_2}$.} We have $|z|\leq1$, then
\begin{align}\label{V-U-4-1}
    \begin{split}
        \int_{|y_1|\geq R} \frac{\lambda_1^{\frac{n+2}{2}}R^{-4}}{\langle y_1\rangle^{n-2}}\frac{\lambda_2^{\frac{n-2}{2}}}{\langle y_2\rangle^{n-2}}dx\thickapprox&\ \int_{|y_2-z|\geq R/\lambda}\frac{R^{-4}\lambda^{\frac{2-n}{2}}}{(\lambda^{-1}+|y_2-z|)^{n-2}}\frac{dy_2}{\langle y_2\rangle^{n-2}}\\
        \lesssim&\ R^{-2-n}\left(\int_{R/\lambda}^2tdt+\int_{2}^\infty t^{4-2n}dt\right)\thickapprox R^{-n-2}.
    \end{split}
\end{align}
\emph{The case $\lambda_1\leq \lambda_2$ and $R_{12}=\sqrt{\lambda_2/\lambda_1}$.} We have $|z|\leq\lambda^{-1}$, then $|y_2|=|y_2-z|-|z|\geq |y_2-z|/2 $,
\begin{align}\label{V-U-4-2}
    \begin{split}
        \int_{|y_1|\geq R} \frac{\lambda_1^{\frac{n+2}{2}}R^{-4}}{\langle y_1\rangle^{n-2}}\frac{\lambda_2^{\frac{n-2}{2}}}{\langle y_2\rangle^{n-2}}dx\thickapprox&\ \int_{|y_2-z|\geq R/\lambda}\frac{R^{-4}\lambda^{\frac{2-n}{2}}}{(\lambda^{-1}+|y_2-z|)^{n-2}}\frac{dy_2}{\langle y_2\rangle^{n-2}}\\
        \lesssim&\ R^{-4}\lambda^{\frac{2-n}{2}}\int_{R/\lambda}^{\infty}t^{3-n}dt\lesssim R^{-n}.
    \end{split}
\end{align}
\emph{The case $\lambda_1\geq \lambda_2$ and $R_{12}=\sqrt{\lambda_1\lambda_2}|z_1-z_2|$.} We have $\sqrt{\lambda}|z|\geq2R$ and $\lambda\geq1$, then
\begin{align}\label{V-U-4-3}
    \begin{split}
        \int_{|y_1|\geq R}& \frac{\lambda_1^{\frac{n+2}{2}}R^{-4}}{\langle y_1\rangle^{n-2}}\frac{\lambda_2^{\frac{n-2}{2}}}{\langle y_2\rangle^{n-2}}dx\thickapprox\ \int_{|y_2-z|\geq R/\lambda}\frac{R^{-4}\lambda^{\frac{2-n}{2}}}{(\lambda^{-1}+|y_2-z|)^{n-2}}\frac{dy_2}{\langle y_2\rangle^{n-2}}\\
        \lesssim&\ R^{-4}\lambda^{\frac{2-n}{2}}|z|^{2-n}\left(\int_{R/\lambda}^{|z|/2}tdt+\int_{0}^{|z|/2}tdt\right) +R^{-4}\lambda^{\frac{2-n}{2}}\int_{|z|/2}^{\infty}t^{3-n}dt\\
        \lesssim&\ R^{-n}.
    \end{split}
\end{align}
\emph{The case $\lambda_1\leq \lambda_2$ and $R_{12}=\sqrt{\lambda_1\lambda_2}|z_1-z_2|$.} We have $\sqrt{\lambda}|z|\geq2R$ and $\lambda|z|\geq1$, then
\begin{align}\label{V-U-4-4}
    \begin{split}
        \int_{|y_1|\geq R}& \frac{\lambda_1^{\frac{n+2}{2}}R^{-4}}{\langle y_1\rangle^{n-2}}\frac{\lambda_2^{\frac{n-2}{2}}}{\langle y_2\rangle^{n-2}}dx\thickapprox\ \int_{|y_2-z|\geq R/\lambda}\frac{R^{-4}\lambda^{\frac{2-n}{2}}}{(\lambda^{-1}+|y_2-z|)^{n-2}}\frac{dy_2}{\langle y_2\rangle^{n-2}}\\
        \lesssim&\ R^{-4}\lambda^{\frac{2-n}{2}}|z|^{2-n}\left(\int_{R/\lambda}^{|z|/2}tdt+\int_{0}^{|z|/2}tdt\right) +R^{-4}\lambda^{\frac{2-n}{2}}\int_{|z|/2}^{\infty}t^{4-2n}dt\\
        \lesssim&\ R^{-4}(\sqrt{\lambda}|z|)^{6-n}(\lambda|z|)^{-2}\leq R^{2-n}.
    \end{split}
\end{align}
Together with \eqref{V-U-4-1}-\eqref{V-U-4-4}, we get \eqref{V-U-4}.
\end{proof}

\bibliographystyle{plainnat}
\bibliography{ref}

\end{document}